\documentclass[11pt]{article}
\usepackage{latexsym,amsfonts,amssymb,mathrsfs,float}
\usepackage{dsfont}
\usepackage{amsthm}
\usepackage[numbers]{natbib}
\usepackage{amsmath}
\usepackage{indentfirst}
\usepackage{bbm}

\usepackage{ulem}
\usepackage{color}
\usepackage{colordvi,multicol}
\allowdisplaybreaks

\usepackage{hyperref}
\hypersetup{colorlinks=true, linkcolor=blue}

\newtheorem{thm}{Theorem}[section]

\newtheorem{lem}[thm]{Lemma}
\newtheorem{pro}[thm]{Proposition}

\newtheorem{rmk}[thm]{Remark}

\numberwithin{equation}{section}

\begin{document}

\title{\bf Favorite sites of one-dimensional transient random walk}
 \author{ Zechun Hu$^{1}$,
 Renming  Song$^{2}$, Guangshuo Zhou$^{1,}$\footnote{Corresponding author}\ ,   and Qianqian Zhou$^{3}$\\ \\
  {\small $^1$ College of Mathematics, Sichuan  University,
 Chengdu, 610065, China}\\
 {\small zchu@scu.edu.cn, zhgsh3363@163.com}\\ \\
  {\small $^2$ Department of Mathematics,
University of Illinois Urbana-Champaign, Urbana, IL 61801, USA}\\
 {\small rsong@illinois.edu} \\  \\
 {\small $^3$ School of Mathematical Sciences,
Tianjin University of Technology,  Tianjin, 300384, China}\\
 {\small qianqzhou@yeah.net}}

 \date{}
\maketitle

\noindent{\bf Abstract:}\quad  In this paper, we study favorite sites of one-dimensional transient random  walk on $\mathbb{Z}$ beyond the simple case.
Denote by $\mathcal{K}(n)$ the set of favorite sites at time $n$.  The goal of this paper is twofold. First, we study $\# \mathcal{K}(n)$, the cardinality of $\mathcal{K}(n)$, and show that,  under some mild conditions, for any positive integer $k$, $\mathbf{P}(\#\mathcal{K}(n)=k, {\rm i.o.})=1$, and $\limsup_{n\to\infty}\#\mathcal{K}(n)/\log\log n=-1/\log \gamma$, where $\gamma$ is the probability that the random walk never returns to the starting point. Second, we study the escape rate of favorite sites, and give the limsup and liminf behaviors, where the liminf behavior is characterized via an integral test.  To the best of our knowledge,
this integral test is the first result of this nature for random walks (in non-random environments).

\smallskip

\noindent {\bf Keywords: } Transient random walk; ~ favorite site;~ escape rate;
~regeneration structure

\smallskip

\noindent {\bf MSC (2020):}\quad 60F15, 60J55.

\section{Introduction and main results}\setcounter{equation}{0}

Let $\{S_n\}_{n\ge0}$ be a random walk on the integer lattice $\mathbb{Z}$,
defined by $S_0=x$  and $S_n=x+\sum_{i=1}^{n} X_i$ for $n\ge1$, where
$\{X_i\}_{i\ge 1}$ is a sequence of independent and identically distributed
(i.i.d.) integer-valued random variables.
We denote by $\mathbf{P}_x$ and $\mathbf{E}_x$ the law of the random walk starting from $x$ and the corresponding expectation, and write $\mathbf{P}$ for $\mathbf{P}_0$ and $\mathbf{E}$ for $\mathbf{E}_0$ for simplicity.

For any  $z\in \mathbb{Z}$ and integer  $n\ge1$, the local time  of the walk at
$z$ up to time $n$ is defined as
$$
\xi(z,n):= \#\{0\leq k\leq n: S_k=z\}.
$$
Hereafter, $\# A$ denotes the cardinality of the set $A$.
Let $\xi^*(n):=\max_{z\in\mathbb Z}\xi(z,n)$ denote the maximal local time at time
$n$.
The set of favorite sites (or most visited sites) is defined by
$$
\mathcal{K}(n):=\{z\in\mathbb{Z}:\xi(z,n)=\xi^*(n)\},\quad  n\geq 1.
$$
We will use $F_n$ to denote an arbitrary element in $\mathcal{K}(n)$.

Before giving our motivation and the main results of this paper, we first recall some results
on the favorite sites (edges) of simple symmetric random walks on  $\mathbb{Z}^d$ for $d\geq 1$. For convenience, we use $\mathcal{K}^{(d)}(n)$ to denote the set of favorite sites at time $n$ in this case.

The cardinality of $\mathcal{K}^{(d)}(n)$ has been intensively studied. It is easy to see that, for $k=1,2$, $\#\mathcal{K}^{(d)}(n)=k$ occurs infinitely often (i.o.) almost surely (a.s.).
Erd\H{o}s and R\'{e}v\'{e}sz \cite{ER1984,ER1987} posed the following question:
\begin{align}\label{1.1}
 {\rm Can}\ \#\mathcal{K}^{(d)}(n)=k\ \mbox{occur i.o. for}\ k\geq 3\ {\rm and}\ d\geq 1?
\end{align}
For $d\geq 3$, the answer is yes: Erd\H{o}s and R\'{e}v\'{e}sz \cite{ER1991}  proved that $\mathbf{P}(\#\mathcal{K}^{(d)}(n)=k\ {\rm i.o.})=1$ for any $k\geq 1$. When $d=1$, T\'oth and  Werner \cite{TW1997} proved that there are no more than three favorite edges eventually, which means that $\mathbf{P}(\#\mathcal{K}^{(1)}_e(n)\geq 4\ {\rm i.o.})=0$, where $\mathcal{K}^{(1)}_e(n)$ denotes the set of favorite edges at time $n$. T\'oth \cite{T2001} showed that a.s. $\#\mathcal{K}^{(1)}(n)\leq 3$ eventually.
In 2018, Ding and Shen \cite{DS2018} proved that a.s. $\#\mathcal{K}^{(1)}(n)=3$ i.o..
Hao et al. \cite{HHMS2026} proved that a.s. ``three favorite edges" occurs i.o..
Hao et al. \cite{HLOZ2025} showed that a.s. $\limsup_{n\to\infty}\#\mathcal{K}^{(2)}(n)=3$, giving a complete answer to the question \eqref{1.1}.
In \cite{HLOZ2025}, it was also proved that for $d\geq 3$,
\begin{align}\label{1.2}
\limsup_{n\to\infty}\frac{\#\mathcal{K}^{(d)}(n)}{\log\log n}=-\frac{1}{\log\gamma_d} ~~\text{a.s.},
\end{align}
where $\gamma_d:=\mathbf{P}(S_n\neq S_0,\forall n\geq 1)$, the probability that a simple random walk never returns to the starting point.  Zhou et al. \cite{ZHS2025} studied one-dimensional asymmetric simple random walks, and proved that their behavior is similar to that of $d$-dimensional simple symmetric random walks with $d\geq 3$, i.e. $\mathbf{P}(\#\mathcal{K}(n)=k\ {\rm i.o.})=1$ for any $k\geq 1$ and a similar result to (\ref{1.2}) holds.

The asymptotic behavior of favorite sites has also been intensively studied.
For the upper limits of $|F_n|$, Erd\H{o}s and R\'{e}v\'{e}sz \cite{ER1984} and Bass and Griffin \cite{BG1985} independently proved the following:
$$
\limsup_{n\to\infty}\frac{|F_n|}{(2n\log\log n)^{1/2}}=1~~\text{a.s.}.
$$
For the lower limits of $|F_n|$, the first result was proved by Bass and Griffin \cite{BG1985}, which says that
\begin{eqnarray}\label{1.3}
\liminf_{n\to\infty}\frac{|F_n|}{n^{1/2}(\log n)^{-\kappa}}=
\left\{
\begin{array}{ll}
0 & \mbox{if}~~ \kappa<1,\\
\infty & \mbox{if}~~\kappa>11,
\end{array}
\right.\quad \text{a.s.}.
\end{eqnarray}
A natural question was

\quad\quad {\it Does there exist $\kappa_0$ such that $\liminf\limits_{n\to\infty}\frac{|F_n|}{n^{1/2}(\log n)^{-\kappa}}$ is 0 if $\kappa<\kappa_0$ and $\infty$ if $\kappa>\kappa_0$?}

\noindent
There is a similar problem for Brownian motion. \eqref{1.3} implies that, if such a $\kappa_0$ exists, then $\kappa_0\in [1,11]$. Lifshits and Shi \cite{LS2004} showed that the lim inf is 0 when $\kappa=1$. Since then, several papers tried to shrink the
interval $[1,11]$.  Bass et al. \cite{BES2000} proved that $\kappa_0\leq9$.
Marcus and Rosen \cite{MR2006} showed that $\kappa_0\leq 3$. Bass \cite{B2023} completely solved this question and showed that $\kappa_0=1$.  Hu and Shi \cite{HS2000} studied favorite sites of Sinai's simple random walk in random environment, proved that the favorite site process is transient,  and characterized the escape rate of the favorite sites via an integral criterion. To the best of our knowledge, no such integral test was known for the escape rate of the favorite sites of random walks (in non-random environments) or Brownian motion. We refer the reader to Shi and T\'{o}th \cite{ST2000} and Okada \cite{Oka16} for two surveys on favorite sites.

Up to now, all the existing results on favorite sites of random walks are for simple random walks.
We will study one-dimensional transient random walks beyond the simple case.
We will study the cardinality of $\mathcal{K}(n)$ and the asymptotic behavior of favorite sites.

Throughout this paper, we assume that $\{S_n\}_{n\geq 0}$ is a transient random walk on $\mathbb{Z}$ with $S_0=0$ and $\mathbf{E}[|X_1|]<\infty$. Without loss of generality, we assume that $ \mu:=\mathbf{E}[X_1]\in (0,\infty)$. By the strong law of large numbers, $S_n\to+\infty$ almost surely. We also always assume the following condition:
\begin{equation}\label{eq:1.1}
\mathbf{P}(X_1<0)>0.
\end{equation}
This condition excludes the trivial case that the random walk has only positive jumps.
Without loss of generality, we assume that the random walk is defined on the canonical path space.

Now we are ready to state the main results of this paper. Theorems 1.1 and 1.2 are on the cardinality of $\mathcal{K}(n)$. Theorems 1.3 and 1.4 are on the asymptotic behavior of favorite sites.
\begin{thm}\label{thm:1.2} For every integer $k\ge1$,
\begin{align}\label{thm:1.2-a}
\mathbf P\left(\#\mathcal{K}(n)=k\ {\rm i.o.}\right)=1.
\end{align}
\end{thm}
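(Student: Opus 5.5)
The plan is to use the regeneration structure of the transient walk: local times at well-separated sites are nearly independent geometric variables, and a second-moment or Borel--Cantelli argument along a sparse sequence of times then produces exact $k$-way ties infinitely often.

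\emph{Local time structure.} By transience, the total local time $\xi(z,\infty)$ at any site $z$ visited by the walk is geometric with parameter $\gamma=\mathbf{P}(S_n\neq 0,\ \forall n\ge1)$. We will use cut times (regeneration times) $\tau_1<\tau_2<\cdots$: times $\tau$ with $S_m<S_\tau\le S_{m'}$ for all $m<\tau\le m'$, together with an appropriate ``past stays below'' condition. Assumption \eqref{eq:1.1} and $\mu>0$ ensure that these occur with positive density. The walk between consecutive regeneration times consists of i.i.d. blocks, so local times at sites in distinct blocks are independent. Moreover, a site $z$ visited well before time $n$ satisfies $\xi(z,n)=\xi(z,\infty)$ with high probability, since the walk has drifted far to the right.

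\emph{Target event.} The maximal local time $\xi^*(n)$ grows like $\log n/\log(1/\gamma)$. Fix a level $\ell$ and consider the first $N\asymp \gamma^{-\ell}$ regeneration blocks; this is roughly the scale at which the maximum over the sites equals $\ell$. We seek the event $A_\ell$ that, among the sites lying in these blocks, exactly $k$ sites have total local time $\ell$ and all others have local time $<\ell$. Then, at a time $n$ after which the walk never returns to these sites and before any new site accumulates local time $\ell$, we have $\#\mathcal K(n)=k$. A simpler way to handle the timing is to take $n$ as the regeneration time closing the $N$-th block. The sites visited later have not yet built up large local time at that moment, so it suffices to check that no site in the current block reaches $\ell$.

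\emph{Probability of the target event.} The number of sites with local time $\ge \ell$ among the first $N$ blocks is approximately Poisson with mean $\lambda\asymp N\gamma^{\ell}$. The sites with local time exactly $\ell$ form a thinning of these, since given $\ge\ell$ visits, exactly $\ell$ visits occurs with probability $\gamma$. Choosing $N=\lfloor c\gamma^{-\ell}\rfloor$ therefore gives $\mathbf P(A_\ell)\ge p_k>0$ uniformly in $\ell$. One subtlety is that local times within a single block are dependent, but a site having local time $\ge\ell$ is a rare event, so a Poisson approximation (Chen--Stein) or a direct computation handles it.

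\emph{Independence across levels.} Take a sparse sequence $\ell_j\to\infty$ and use disjoint stretches of blocks, namely blocks $N_{j-1}+1,\dots,N_j$. The event that these blocks contain exactly $k$ sites with local time $\ell_j$, that all other sites there have smaller local time, and that all earlier sites have local time $<\ell_j$ is nearly determined by independent stretches. The requirement on the earlier sites holds with probability close to $1$ when $N_{j-1}\gamma^{\ell_j}\to0$. The second Borel--Cantelli lemma (or the Kochen--Stone lemma) then gives infinitely many occurrences.

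\emph{Main obstacle.} The hardest step is the uniform lower bound $p_k$, which requires controlling the joint law of local times within a block. The walk has unbounded jumps, so sites inside one block can be visited in a correlated way. The approach I would try first is to lower bound the probability of an explicit configuration: blocks whose local time structure is a ``typical'' one, with $k$ designated blocks each containing a single site visited exactly $\ell$ times. Each such designated block has probability $\ge c\gamma^{\ell}$, which we would establish by a local excursion construction that uses $\mathbf P(X_1<0)>0$ to force returns. Combined with a union bound showing that the other blocks have local times $<\ell$, this gives the estimate.
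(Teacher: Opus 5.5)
There is a genuine gap, and it sits exactly at the step you call the main obstacle. Your block architecture is sound: blocks are i.i.d. by Proposition 3.3, and local times are final at time $\tau_{N+1}-1$ by Lemma 3.4. The scaling has a slip. Since $\mathbf{P}(\xi(0,\infty)\ge \ell)=(1-\gamma)^{\ell-1}$, the natural window is $N\asymp(1-\gamma)^{-\ell}$ and $\xi^*(n)\sim \log n/\log\frac{1}{1-\gamma}$ (Lemma 2.1), not $\gamma^{-\ell}$ and $\log n/\log(1/\gamma)$.

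More importantly, your sufficient event has probability bounded below only if two bounds hold at the \emph{same} exponential rate. Let $\zeta_1^*$ be the maximal local time inside one block, and $q_\ell$ the probability of a designated block. The union bound over the other blocks needs $N\,\mathbf{P}(\zeta_1^*\ge\ell)\le C$. Keeping $\binom{N}{k}q_\ell^k$ bounded below needs $Nq_\ell\ge c$. Together these force $q_\ell\ge c\,\mathbf{P}(\zeta_1^*\ge\ell)$ uniformly in $\ell$.

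A local excursion construction that forces the $\ell-1$ returns along a prescribed loop (as in Lemma 3.5) gives only $q_\ell\gtrsim\eta^{\ell}$, where $\eta$ is the probability of that loop. Since other return paths also have positive probability, $\eta<1-\gamma=\mathbf{P}(\hat\tau_0<\infty)$. Hence $N\eta^\ell\to0$ at the scale your union bound allows, and $p_k$ degenerates.

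The Chen--Stein and thinning steps are also only asserted. The memoryless property of one site's local time says nothing about it jointly with all other sites staying below $\ell$. With only $\mathbf{E}|X_1|<\infty$, block lengths may have infinite variance, so within-block pairs would need a two-point bound like Lemma 2.3.

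The idea you are missing is that no sharp estimate is needed. The paper works globally with the stopping times $T_m^j$, at which the $j$-th distinct site reaches local time $m$. On $\{T_m^k<T_{m+1}^1\}$ one already has $\#\mathcal{K}(T_m^k)=k$ with $\xi^*=m$.

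Suppose that after $T_m^{j-1}$ the walk stays strictly above $L_m^{j-1}=S_{T_m^{j-1}}$ up to $T_m^j$. Then inductively $L_m^1<\cdots<L_m^{j-1}$, and none of them is revisited. So no site can reach $m+1$ before a $j$-th site reaches $m$. Sites above $L_m^{j-1}$ visited earlier may well reach $m$, but that is just $T_m^j$; this is exactly the issue that defeats a naive ``stay above the first thick site'' argument inside one block.

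By the strong Markov property, this has conditional probability at least $\gamma_+=\mathbf{P}(S_n>0,\ \forall n\ge1)>0$. Hence $\mathbf{P}(T_m^k<T_{m+1}^1)\ge\gamma_+^{k-1}$ for every $m$, and Lemma 2.2 gives $\mathbf{P}(\#\mathcal{K}(n)=k \text{ i.o.})\ge\gamma_+^{k-1}$. Since no fixed site is a favorite site infinitely often, this event lies in the completed tail $\sigma$-field of $(X_n)$. Kolmogorov's 0--1 law then upgrades the bound to $1$. This removes the need for your Poisson approximation, the sharp block estimates, and the cross-level independence argument.
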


Define
\begin{equation}\label{eq:1.2}
\gamma:=\mathbf{P}(S_n\neq0,\ \forall n\ge1).
\end{equation}
Since $\mu\in (0, \infty)$ and \eqref{eq:1.1} holds, we know that $\gamma\in (0, 1)$.

\begin{thm}\label{thm:1.3}
If  $\mathbf{E}[X_1^2]<\infty$, then almost surely,
$$
\limsup_{n\to\infty}
\frac{\#\mathcal{K}(n)}{\log\log n}
=
-\frac{1}{\log\gamma}.
$$
\end{thm}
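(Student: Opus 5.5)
Our plan follows the strategy used for $d\ge3$ in \cite{HLOZ2025} and for asymmetric simple walks in \cite{ZHS2025}, built on the following structure. For a transient walk, each site $z$ that is visited has total local time $\xi(z,\infty)$ which, by the strong Markov property at the first hitting time of $z$, is geometric: $\mathbf P(\xi(z,\infty)\ge j\mid z\text{ visited})=(1-\gamma)^{j-1}$. The maximal local time up to time $n$ is therefore of order $\log n/\log(1/(1-\gamma))$. Having $\#\mathcal K(n)=k$ requires $k$ sites that all share the maximal value. Heuristically, the cost of $k$ ties beyond the typical number is a factor $\gamma^{k}$, because each extra favorite site must have a final visit (no return) at a prescribed moment. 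This is where $\gamma$ enters and gives $k\approx \log\log n/\log(1/\gamma)$.

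\textbf{Upper bound.} We show that for $\varepsilon>0$, a.s. $\#\mathcal K(n)\le (1+\varepsilon)\log\log n/(-\log\gamma)$ eventually. Fix $n$ and a level $m$. The event $\{\#\mathcal K(n)\ge k,\ \xi^*(n)=m\}$ forces $k$ distinct sites $z_1<\dots<z_k$ in the range of $S_{[0,n]}$ each having local time exactly $m$ at time $n$, with $\xi^*(n)=m$. Order the sites by the time at which each reaches local time $m$. At all but the last of these times, the site must then avoid being revisited until time $n$; otherwise its local time would exceed $m$, contradicting $\xi^*(n)=m$. By the strong Markov property this costs at most $\gamma$ per site, up to a correction for the walk leaving the window, since $\mathbf P(\text{no return})=\gamma$. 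A union bound over the $O(n^{C})$ choices seems too lossy. Instead we restrict to the times at which $\xi^*$ increases, or at which a new site attains the current maximum. There are only $O(\log n)$ values of $\xi^*$, and we expect to show that the number of sites attaining local time $\xi^*$ during each plateau is stochastically dominated by a geometric variable with parameter $\gamma$, uniformly. Taking $n_j=e^{j}$ and using Borel--Cantelli over the plateaus, of which there are polylog many up to $n_j$, then gives the bound, since $\gamma^{k}\cdot\mathrm{polylog}$ is summable along $n_j$ when $k=(1+\varepsilon)\log\log n/|\log\gamma|$. Making this domination rigorous is the step where we use $\mathbf E[X_1^2]<\infty$. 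We need tail bounds on the overshoot and on the backward excursions: the walk jumps, so sites near the maximum can be revisited from far to the right. We must control this, and decorrelate sites separated by a regeneration time, using the regeneration (cut-time) structure of the walk with ladder-height moments.

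\textbf{Lower bound.} We construct $(1-\varepsilon)\log\log n/|\log\gamma|$ favorites infinitely often. Use the regeneration times $\tau_i$: times after which the walk never goes below its current position, with i.i.d. blocks. Set $k=(1-\varepsilon)\log\log n/|\log\gamma|$. We look for the event that, within a single block, or within $k$ consecutive regeneration blocks, the walk creates $k$ sites each visited exactly $M$ times, where $M$ equals the running maximum local time. Theorem \ref{thm:1.2} already gives $\#\mathcal K(n)=k$ i.o. for fixed $k$ via such a local construction. Here we need its probability quantitatively, at least $c\,\gamma^{k}\, p_M$ say, where $p_M$ is the probability that the block first produces a site of local time $M$. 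Along a sequence of the roughly $(\log n_j)$ record-increase epochs, these events are independent across separated blocks. Since $\sum_j \gamma^{k_j}$ diverges for $k_j=(1-\varepsilon)\log\log n_j/|\log\gamma|$ when the $n_j$ grow geometrically, the second Borel--Cantelli lemma applies.

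\textbf{Main obstacle.} The main obstacle is the upper bound's domination argument. Unlike the simple walk, sites are not visited in order, jumps skip sites, and a ``last visit'' at one site is correlated with returns to nearby sites. We plan to handle this by working with excursions between regeneration times, and by showing that the regeneration block lengths have exponential-type or second-moment tails under $\mathbf E[X_1^2]<\infty$. This lets us discard the event that two candidate favorite sites interact beyond $(\log n)^{C}$ distance.
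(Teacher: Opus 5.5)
Your heuristic is the right one: each additional site at the current maximal level $m\approx\lambda\log n$ costs a factor $\gamma$, and $\log m\approx\log\log n$. But the step that turns it into a proof in either direction is missing, and the remedy you propose targets the wrong regime. Let $T_m^j$ be the time the $j$-th site $L_m^j$ reaches level $m$. The strong Markov property at $T_m^j$ bounds the probability that $L_m^j$ is not revisited before the next site reaches level $m$ only by $\mathbf{P}(\hat{\tau}_0>T_m^{j+1}-T_m^j)$. This is close to $1$, not to $\gamma$, when the gap is short. So the difficulty is not the walk leaving the window but the window being too short, and the geometric domination you want cannot be read off from the Markov property.

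You must show that a.s., for all large $m$, $T_m^{j+1}-T_m^j\geq m^2/16$, i.e.\ that two distinct sites with local time at least $(1-\varepsilon)\lambda\log n$ are never visited within $(\lambda\log n)^2$ steps of each other. A union bound over the roughly $n(\log n)^2$ candidate pairs needs a per-pair bound $n^{-1-c}$. The one-site tail $(1-\gamma)^{(1-\varepsilon)\lambda\log n}=n^{-(1-\varepsilon)}$ cannot provide this. The paper's key input is the two-point estimate of Lemma~\ref{pro:2.3}, $\sup_{z\neq0}\mathbf{P}(\xi(0,\infty)+\xi(z,\infty)\geq u)\leq C(1-\gamma/a)^u$ with $(1-\gamma/a)^2<1-\gamma$. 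It is proved by computing the spectral radius $1-\gamma/(1+\sqrt{\mathbf{P}(\hat{\tau}_z<\infty)\mathbf{P}(\hat{\tau}_{-z}<\infty)})$ of the $2\times2$ return matrix $Q_z$ and showing $\sup_{z\neq0}\mathbf{P}(\hat{\tau}_z<\infty)\mathbf{P}(\hat{\tau}_{-z}<\infty)<1-\gamma$. This gives Lemma~\ref{lem:2.4} and the gap property of Remark~\ref{rem:2.5}.

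Your regeneration plan at best discards interactions beyond distance $(\log n)^C$. What must be excluded is exactly the short-range case: two thick sites visited within $(\log n)^2$ steps, whose visits interleave. Decorrelation across regeneration times says nothing about that. Also, block lengths need not have exponential tails when only $\mathbf{E}[X_1^2]<\infty$ is assumed. In the paper the second moment is used only through Lemma~\ref{lem:2.7}, which, once the gap is known, gives $\mathbf{P}(S_0\notin\{S_1,\ldots,S_{\lfloor m^2/16\rfloor}\})\leq\gamma+Cm^{-2}$.

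Two further points. In your upper bound the count does not close. With $n_j=e^j$ and $k=(1+\varepsilon)\log\log n_j/|\log\gamma|$ one has $\gamma^k=j^{-1-\varepsilon}$, and multiplying by the order-$j$ plateaus below $n_j$ gives a divergent series. Instead, reduce via Lemma~\ref{lem:2.1} to $\limsup_{m}\mathcal{G}_m/\log m=-1/\log\gamma$ with $\mathcal{G}_m=\sup\{k:T_m^k<T_{m+1}^1\}$. Then apply Borel--Cantelli over levels $m$, using $(\gamma+Cm^{-2})^{\lfloor(1+\varepsilon)\log m/|\log\gamma|\rfloor}\leq Cm^{-1-\varepsilon}$.

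In your lower bound, the events ``$k$ ties at the running maximum'' in different blocks are not independent, because the running maximum depends on the whole past. The bound $c\gamma^kp_M$ is asserted rather than derived. The construction behind Theorem~\ref{thm:1.2} only gives $\gamma_+^{k-1}$ with $\gamma_+=\mathbf{P}(S_n>0,\ \forall n\geq1)$, which can be strictly smaller than $\gamma$ and would give the wrong constant. To pay only $\gamma$ per site, you must let the walk dip below the newest favorite site without hitting it. You then use Lemma~\ref{lem:2.7} to keep it above that site after $m^2/16$ steps. You must also rule out hits on older favorite sites during the first $m^2/16$ steps, which again rests on the two-point estimate (Lemma~\ref{lem:2.6}). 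The paper concludes with the conditional (L\'evy) Borel--Cantelli lemma over levels $m$ with respect to $(\mathcal{F}_{m+1}^1)_{m\geq1}$.
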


\begin{thm}\label{thm:1.4}
For any $F_n\in \mathcal{K}(n)$,
$$
\limsup_{n\to\infty}\frac{F_n}{\mu n}=1\ \textup{a.s.}.
$$
\end{thm}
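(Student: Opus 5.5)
Upper bound: since $\xi(F_n,n)\ge 1$, the site $F_n$ has been visited by time $n$, so $F_n\le \max_{k\le n}S_k$. The strong law of large numbers gives $\max_{k\le n}S_k/n\to\mu$ a.s. Hence $\limsup_{n} F_n/(\mu n)\le 1$ a.s.

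Lower bound: we need infinitely many $n$ at which every favorite site lies near $S_n\approx\mu n$. Since all of $\mathcal K(n)$ must be handled, we will force the current position to be the unique favorite site. The local times at distinct sites are of order one: they are geometric with parameter $\gamma$, roughly independent along the regeneration structure. So $\xi^*(n)$ is only of order $\log n$ (Theorem \ref{thm:1.3} is a refinement of this). We aim to show that a.s., for infinitely many $n$, the walk at time $n$ has spent an excessive number of visits at a single recently reached site.

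Concretely, we use the regeneration (cut) times $\tau_1<\tau_2<\cdots$. These are times after which the walk never goes below its current position and never went above it before. They exist a.s. because $\mu>0$ and $\mathbf E|X_1|<\infty$. The blocks between consecutive regeneration times are i.i.d., and $\tau_j/j$ and $S_{\tau_j}/j$ converge a.s. to positive constants. Fix a large $L$ and let $A_j$ be the event, measurable with respect to the $j$-th block, that right after $\tau_j$ the walk returns to the site $S_{\tau_j}$ at least $m_j$ times within a time window $w_j$, using a fixed loop (one negative jump followed by positive jumps back). Here $m_j\approx C\log j$ with $C$ large, and $w_j=O(\log j)$. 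Condition \eqref{eq:1.1} and $\mu>0$ guarantee that such a loop has positive probability $p$, so $\mathbf P(A_j)\ge p^{C'\log j}=j^{-C'\log(1/p)}$.

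The main obstacle is calibrating the parameters. We need $\sum_j\mathbf P(A_j)=\infty$, which requires $m_j\le c\log j$ for a small $c$. At the same time we need $m_j>\xi^*(\tau_j)$, which is typically only $\approx -\log j/\log\gamma$. These two requirements conflict unless handled carefully. We resolve this by not comparing with all past local times directly. Instead we use that $\xi^*(\tau_j)\le K\log j$ eventually, by a union bound with geometric tails. We then choose the loop length $m_j=K\log j+1$ and accept a summable-or-not issue by working along a sparse subsequence $j_\ell$. We use the loop probability of repeated returns: $\mathbf P(\text{return count}\ge m)\ge(1-\gamma)^m$, giving $\mathbf P(A_j)\ge(1-\gamma)^{K\log j}=j^{-K\log(1/(1-\gamma))}$. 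The exponent $K$ should be picked from the sharp bound $\xi^*(\tau_j)\le(1+\varepsilon)\log j/\log(1/\gamma)$. Whether $(1+\varepsilon)\log(1/(1-\gamma))/\log(1/\gamma)\le1$ is unclear in general. If it fails, we replace the comparison at time $n=\tau_j$ by one at a deterministic time scale $n\approx j$, where the maximal local time among the first $j$ blocks is compared to the block record over $[j/2,j]$.

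Given infinitely many such $j$ (by Borel–Cantelli applied to independent blocks), take $n$ to be the time of the $m_j$-th return. At that time $\mathcal K(n)=\{S_{\tau_j}\}$ and $n=\tau_j+O(\log j)$. Therefore $F_n/(\mu n)=S_{\tau_j}/(\mu\tau_j)+o(1)\to 1$ by the SLLN, which gives $\limsup_n F_n/(\mu n)\ge 1$ a.s.
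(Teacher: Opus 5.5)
Your upper bound is correct and is the paper's argument. The lower bound has a genuine gap. The plan of making the regeneration site $S_{\tau_j}$ the unique favorite site through $m_j$ returns does not just need calibration; it fails in general.

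\begin{itemize}
\item The growth rate is $\xi^*(n)\sim\lambda\log n$ with $\lambda=-1/\log(1-\gamma)$ (Lemma~\ref{lem:2.1}), not $\log n/\log(1/\gamma)$. Since $\tau_j/j\to\alpha$, eventually $\xi^*(\tau_j)\geq(1-\varepsilon)\lambda\log j$.
\item After $\tau_j$ the walk never goes below $S_{\tau_j}$, so a loop that starts with a negative jump is impossible there.
\item By Proposition~\ref{pro:3.3}, $\zeta_j(S_{\tau_j})$ has the law of $\xi(0,\infty)$ under $\bar{\mathbf{P}}$. Hence $\mathbf{P}(\zeta_j(S_{\tau_j})\geq m)=q^{m-1}$, where $q:=\mathbf{P}(\hat{\tau}_0<\infty,\ S_k\geq0\ \forall k\leq\hat{\tau}_0)\leq 1-\gamma$, and typically $q<1-\gamma$.
\item For the nearest-neighbour walk with $\mathbf{P}(X_1=-1)=p_-<1/2$, one has $q=p_-$ but $1-\gamma=2p_-$. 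So $\lambda\log(1/q)=\log(1/p_-)/\log(1/(2p_-))>1$, and $\sum_j q^{(1-\varepsilon)\lambda\log j}<\infty$ for small $\varepsilon$. Borel--Cantelli then shows that a.s.\ only finitely many regeneration sites are \emph{ever} favorite sites.
\end{itemize}

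So no choice of $m_j$, window, loop or subsequence can rescue this; a fixed loop only lowers the probability further. Independently, the events $A_j\cap\{m_j>\xi^*(\tau_j)\}$ are not independent in $j$, so the second Borel--Cantelli lemma would not apply as you use it. Your fallback, as stated (the record over $[j/2,j]$), only gives $F_n\geq S_{\tau_{\lceil j/2\rceil}}\approx\beta j/2$ at $n\approx\alpha j$. That yields $\limsup_n F_n/(\mu n)\geq 1/2$, not $1$.

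The missing idea is to let the favorite site lie anywhere inside a block and to use records of the block maxima instead of deterministic thresholds. By Lemmas~\ref{lem:3.4} and~\ref{lem:3.5}, $(\zeta_r^*)_{r\geq1}$ is i.i.d.\ with $\mathbf{P}(\zeta_1^*\geq m)>0$ for every $m$. So a.s.\ there are infinitely many $R$ with $\zeta_R^*>\max\{\max_x\xi(x,\tau_1-1),\max_{k<R}\zeta_k^*\}$. Each $x\in\mathcal{E}_k$ is visited only during block $k$. Hence at time $\tau_{R+1}-1$ every site below $S_{\tau_R}$ already carries its final local time, which is smaller than $\zeta_R^*$, and sites $\geq S_{\tau_{R+1}}$ are unvisited. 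Therefore $\mathcal{K}(\tau_{R+1}-1)\subseteq[S_{\tau_R},S_{\tau_{R+1}})$, and by Proposition~\ref{pro:3.3}
\[
\frac{F_{\tau_{R+1}-1}}{\mu(\tau_{R+1}-1)}\geq\frac{S_{\tau_R}}{\mu(\tau_{R+1}-1)}\to\frac{\beta}{\mu\alpha}=1 .
\]
This uses only the i.i.d.\ block structure and needs no estimate on $\xi^*$.
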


\begin{thm}\label{thm:1.5}
For any $F_n\in \mathcal{K}(n)$ and non-decreasing $a: \mathbb{N}\to (0, \infty)$,
$$
\liminf_{n\to\infty}\frac{a(n)}{n}F_n=
\begin{cases}
0,\\
\infty,
\end{cases}
\textup{a.s.}
~\Longleftrightarrow~
\sum_{n=2}^{\infty}\frac{1}{n\,a(n)}
\begin{cases}
=\infty,\\
<\infty.
\end{cases}
$$
 In particular,
 for   $\kappa>0$,  almost surely,
\begin{eqnarray*}
\liminf_{n\to\infty}\frac{(\log n)^{\kappa}}{n}F_n=
\left\{
\begin{array}{ll}
0& \mbox{if} \quad
\kappa\leq 1,\\
\infty&  \mbox{if}
\quad  \kappa>1.
\end{array}
\right.
\end{eqnarray*}
\end{thm}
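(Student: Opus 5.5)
The plan is to reduce the liminf behaviour of $F_n$ to the record times of the maximal local time, using the regeneration structure of the transient walk. Since $S_n\to+\infty$, every site $z$ is visited finitely often, and $\xi(z,\infty)$ is geometric with parameter $\gamma$ from \eqref{eq:1.2} under $\mathbf{P}(\,\cdot\mid z\text{ visited})$. The visits to $z$ occur within a time window of tight size around the first hitting time $T_z\approx z/\mu$. A favorite site $F_n$ is small compared with $n/a(n)$ only if some site $z\le n/a(n)$ still carries the maximal local time at time $n$. Since by Theorem~\ref{thm:1.4} the walk is near $\mu n$, this forces every site in the range $(n/a(n),\mu n]$ to have total local time at most $\max_{z\le n/a(n)}\xi(z,\infty)$. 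So the event $\{F_n\le n/a(n)\}$ is, up to negligible boundary effects, the event that the running maximum of the i.i.d.-like sequence $\{\xi(z,\infty)\}_{z}$, which is geometric with tail $(1-\gamma)^{k}$, produces no new record (nor ties broken the wrong way) between levels $n/a(n)$ and $\mu n$.

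The concrete steps are as follows. First, using regeneration times (cut times where the walk never returns below its current position, which occur with positive density because of \eqref{eq:1.1} and $\mu>0$), I would show that the local times $\xi(z,\infty)$ at sites in disjoint regeneration blocks are independent. Second, I would compare $\{\xi(z,\infty)\}$ with an i.i.d.\ sequence, with tail $\mathbf{P}(\xi\ge k)\asymp c(1-\gamma)^{k}$, well enough for record-type estimates. Third, I would invoke the classical fact that for i.i.d.\ (nearly continuous) sequences the probability of no record in $(m,M]$ is about $m/M$. The discreteness of the geometric law only changes constants, because ties are handled through the count of maximisers, as in the cardinality results. This yields
$$
\mathbf{P}\bigl(F_n\le n/a(n)\bigr)\asymp \frac{1}{a(n)}.
$$
Fourth, along the dyadic subsequence $n_j=2^j$, and using that $a$ is non-decreasing, we get $\sum_j 1/a(2^j)<\infty$ if and only if $\sum_n 1/(n\,a(n))<\infty$.

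For the convergent case, I would apply Borel--Cantelli over dyadic blocks. I would bound the probability that some $n\in[2^j,2^{j+1}]$ has $F_n\le 2^{j+1}/a(2^j)$ by a constant times $1/a(2^j)$, via the same no-record estimate on $(2^{j+1}/a(2^j),\mu 2^{j}]$. Replacing $a$ by $a/\varepsilon$, which preserves convergence, then gives a liminf of $\infty$. For the divergent case, I would use the events $E_j=\{$no record of local time in $(2^j/a(2^j),\mu 2^j]\}$, which have probabilities summing to infinity. I would take a sparser subsequence, for instance $n_j=j^{j}$-type spacing or the Kochen--Stone lemma, to get enough independence via the regeneration blocks. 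Then replacing $a$ by $Ka$, which preserves divergence, gives a liminf of $0$.

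The main obstacle is the divergent case. Record events for overlapping ranges are strongly correlated, since a single large local time at a small site blocks all later records. I would handle this with a Kochen--Stone second-moment bound, $\mathbf{P}(E_i\cap E_j)\lesssim \mathbf{P}(E_i)\mathbf{P}(E_j)$ up to constants, computed from the explicit record structure of i.i.d.\ sequences and transferred through the regeneration coupling. The remaining technical point is controlling the lag between visits and time: local times at sites near $S_n$ are not yet complete. This needs a bound on the overshoot and return times, available from $\mathbf{E}|X_1|<\infty$. The special case $a(n)=(\log n)^{\kappa}$ follows because $\sum 1/(n(\log n)^\kappa)$ diverges exactly when $\kappa\le1$.
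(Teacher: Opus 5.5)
Your architecture matches the paper's: regeneration blocks, reduction to the record structure of i.i.d.\ block maxima, dyadic condensation, Borel--Cantelli for the convergent half and a second-moment argument for the divergent half. The convergent half is sound. Because the first-maximizer index $I_r^-$ is non-decreasing in $r$, a single no-record event covers all $n\in[2^j,2^{j+1}]$, which is a legitimate alternative to the paper's $k_i=\lceil ib(i)/A\rceil$ device. To get $\liminf=\infty$ you must apply it to $\varepsilon a$, not $a/\varepsilon$.

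The gap is in the divergent half: the pairwise bound $\mathbf{P}(E_i\cap E_j)\lesssim\mathbf{P}(E_i)\mathbf{P}(E_j)$ on which your Kochen--Stone step rests is false at dyadic spacing. In block language, $E_j=\{I_{2^j}^+\le\ell_j\}$ with $\ell_j\approx 2^j/a(2^j)$. Suppose $a(n)\to\infty$ and $a(2n)\le 2a(n)$, e.g.\ $a(n)=\log n$; then $\ell_j\le\ell_{j+1}$. Consider the event that $\max_{k\le 2^{j+1}}\zeta_k^*$ is attained at a unique index in $[1,\ell_j]$. It lies in $E_j\cap E_{j+1}$, and by exchangeability it has probability $\mathbf{P}(J_{2^{j+1}}=1)\,\ell_j/2^{j+1}\ge c\,\mathbf{P}(E_j)$. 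By contrast, $\mathbf{P}(E_j)\mathbf{P}(E_{j+1})\asymp\mathbf{P}(E_j)/a(2^{j+1})$. This is exactly the correlation you identified yourself, and it cannot be absorbed into constants. Passing to a sparser subsequence does not help, because condensation is sharp only at geometric spacing: $a(n)=\log n\,\log\log n$ has $\sum_n 1/(na(n))=\infty$ but $\sum_j 1/a(j^j)<\infty$.

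What you need is the two-point bound of Lemma~\ref{lem:3.7}. For $s\ge 2r$, split according to whether $I_s^\pm\le r$ or $I_s^\pm>r$. In the first case $I_s^\pm=I_r^\pm$, giving a term $C\,l/s$. In the second case $I_s^\pm$ is controlled by blocks $r+1,\dots,s$, which are independent of $I_r^\pm$, giving a product term. The result is $\mathbf{P}(I_r^\pm\le l,\,I_s^\pm\le L)\le C\,l/s+C\,(l/r)(L/s)$. With $r=2^q$ and $s=2^h$, the non-product term equals $(\ell_q/2^q)2^{q-h}$. Since $\sum_{h>q}2^{q-h}\le 1$, its total contribution to $\mathbf{E}[(Q_u^\pm)^2]$ is $O(\mathbf{E}[Q_u^\pm])=o((\mathbf{E}[Q_u^\pm])^2)$, and that is what makes Paley--Zygmund work at dyadic scale.

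Even then, the second-moment method only gives $\mathbf{P}(E_j\ \mathrm{i.o.})\ge c>0$. Rescaling $a$ to $Ka$ does not raise this to one; you need a zero--one law. The paper uses $I_r^\pm\to\infty$ a.s., so $I_r^\pm$ eventually equals the maximizer index computed from the blocks $k>i$ alone. This puts the event in the tail $\sigma$-field of the i.i.d.\ sequence $(\zeta_k^*)$, and Kolmogorov's 0-1 law applies.

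Finally, your tie control also needs an ingredient you have not supplied. It requires $\mathbf{E}[J_r]$ bounded and $\mathbf{P}(J_r=1)$ bounded below, which together give $\mathbf{P}(I_r^\pm\le l)\asymp l/r$. Both rest on the ratio bound $\mathbf{P}(\zeta_1^*\ge m+1)\ge\eta\,\mathbf{P}(\zeta_1^*\ge m)$ for block maxima. This does not follow from the geometric law of a single site's local time, because local times of sites in the same block are dependent. The paper proves it separately by inserting a zero-displacement loop (Lemma~\ref{lem:3.5}).
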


To prove Theorem~\ref{thm:1.2}, we first construct successively $k$ distinct favorite sites \and then use transience together with Kolmogorov's 0-1 law to obtain that almost surely ``$k$ favorite sites"  occurs  infinitely often. The proof of Theorem~\ref{thm:1.3} follows the general idea of the proof of the corresponding result in Hao et al. \cite{HLOZ2025}, with substantial modifications. The key ingredient is the two-point local time estimate in Lemma \ref{pro:2.3}  below, which controls the occurrence of two competing thick points within $O((\log n)^2)$ steps. Combined with the eventual ordering of successive favorite sites and the Borel-Cantelli lemma, this yields the precise $\log\log n$ asymptotic behavior under $\mathbf{E}[X_1^2]<\infty$.

Results similar to Theorems~\ref{thm:1.4} and~\ref{thm:1.5} have been established for Brownian motion with drift in \cite{BM1998}. Transferring these results via the strong invariance principle for local times generally requires at least moment assumptions of order $2+\varepsilon$.
So we will use a direct method which only uses $\mathbf{E}[|X_1|]<\infty$ and $\mathbf{P}(X_1<0)>0$.

The proofs of Theorems~\ref{thm:1.4} and~\ref{thm:1.5} are based on regeneration epochs induced by the strict ascending ladder epochs that are never subsequently undershot. By decomposing the path at the regeneration epochs and using spatial homogeneity, Proposition~\ref{pro:3.3} proves that the corresponding regeneration block vectors are i.i.d..
It also implies the strong laws of large numbers for the regeneration times and heights. Lemma~\ref{lem:3.4} further shows that the collections of local times accumulated within the regeneration blocks are i.i.d..
 The favorite sites can therefore be localized through the first and last block indices $I_r^-$ and $I_r^+$ attaining the maximal block local time.
The i.i.d. structure of the block maxima and bounds on the number of
maximizers yield sharp scale estimates for these indices. These estimates connect the location of the favorite sites with the record structure of the block local time maxima. Theorem~\ref{thm:1.4} follows from the occurrence of arbitrarily large block records and the linear growth of the regeneration heights. To prove Theorem~\ref{thm:1.5}, we first give an  integral test for $I_r^\pm$ in the regeneration block scale, and then use Proposition~\ref{pro:3.3} to transfer the integral test to the original time and spatial scales.

For $a,b\in\mathbf{R}$, write $a\wedge b:=\min\{a,b\}$ and $a\vee b:=\max\{a,b\}$.
For a set $A$, we use  $1_A$ to denote its indicator function. For $x\in\mathbf{R}$, $\lfloor x\rfloor$ stands for the greatest integer not exceeding $x$,  $\lceil x\rceil$ stands for the smallest integer which is larger than or equal to $x$. Define
$$
\mathcal{F}_n:=\sigma(S_0,S_1,\ldots,S_n),\ \forall n\geq 0,
~~\text{and}~~
\mathcal{F}_\infty:=\sigma\left(\cup_{n\geq0}\mathcal{F}_n\right).
$$
For integers $0\leq i<j$,
we define
$
S_{(i,j)}:=(S_{i+1},\ldots,S_{j-1}),
~
S_{[i,j]}:=(S_i,\ldots,S_j),
~
S_{[i,j)}:=(S_i,\ldots,S_{j-1}).
$
For a transition kernel $Q=(Q(x,y))_{x,y\in E}$ on a countable state space $E$,
$Q^{(k)}$ stands for its $k$-fold convolution, i.e., $Q^{(0)}(x,y):=1_{\{x=y\}}$ and, for $k\geq1$,
$$
Q^{(k)}(x,y):=\sum_{x_1,\ldots,x_{k-1}\in E}Q(x,x_1)Q(x_1,x_2)\cdots Q(x_{k-1},y).
$$
For a matrix $B=(B(x,y))_{x,y\in E}$ indexed by a finite set $E$, we define
$$
\|B\|_\infty:=\max_{x\in E}\sum_{y\in E}|B(x,y)|.
$$
Throughout the paper, $c$ , $C$ and $c_i$ denote generic positive constants whose values may change from one appearance  to another.

The organization of this paper is as follows.
The proofs of Theorems \ref{thm:1.2} and \ref{thm:1.3} are given in Sections 2.1 and 2.2, respectively, and the proofs of three lemmas, Lemmas 2.1, 2.4 and 2.6, used to prove Theorems \ref{thm:1.2} and \ref{thm:1.3}, are given in Section 2.3.
In Sections 3.1 and 3.2 we establish properties of the regeneration blocks and their local times. In Sections 3.3 and 3.4, we prove Theorems \ref{thm:1.4} and \ref{thm:1.5}, respectively.  Several auxiliary lemmas used in the proof of Theorem \ref{thm:1.5} are proved in Section 3.5.

\section{Proofs of Theorems \ref{thm:1.2} and  \ref{thm:1.3}}\label{sec:2}\setcounter{equation}{0}

For $z\in\mathbb{Z}$, let $\hat{\tau}_z$ be the first hitting time of
$z$:
$$
\hat{\tau}_z:=\inf\{n\ge1:~S_n=z\}.
$$
Then for the $\gamma$  defined in (\ref{eq:1.2}), we have
$\gamma=\mathbf{P}(\hat{\tau}_0=\infty).$
Let
\begin{equation}\label{eq:2.1}
\gamma_n:=\mathbf{P}(\hat{\tau}_0>n),
\end{equation}
be the probability that the random walk does not return to the starting point in the first $n$ steps.

We now introduce some stopping times used in the sequel.  For $m,k\ge1$, define
\begin{align*}
&T_m^0:=0,\quad T_1^1:=0,\\
&T_m^k:=\inf\left\{n>T_m^{k-1}:\#\{z\in\mathbb{Z}:\xi(z,n)\ge m\}=k\right\}\quad \mbox{for any}\ (m,k)\neq (1,1),
\end{align*}
and
$$
\mathcal{F}_m^k:=\mathcal{F}_{T_m^k}:=
\left\{A\in\mathcal{F}_{\infty}: A\cap\{T_m^k\leq n\}\in\mathcal{F}_n \text{ for all }n\geq0 \right\}.
$$
Define $L_m^k:=S_{T_m^k}$ on $\{T_m^k<\infty\}$ and $L_m^k:=+\infty$ otherwise. Then $L_m^k$ is $\mathcal{F}_m^k$-measurable and is the $k$-th site whose local time
is at least $m$
on $\{T_m^k<\infty\}$.

For $m\ge1$, we define
$$
A_m^1:=\Omega,~~\widetilde{A}_m^1:=\Omega,
$$
and
$$
A_m^2:=\left\{S_n\ne L_m^1~\text{for all}~n\in(T_m^1,T_m^2\wedge T_{m+1}^1]\right\},
~~
\widetilde{A}_m^2:=\Omega.
$$
For $k\geq3$, we define
$$
\begin{aligned}
A_m^k
:=
&\left\{S_n\notin\{L_m^1,\ldots,L_m^{k-2}\}~\text{for all}~ n\in\left[T_m^{k-1}+
 \frac{m^2}{16},
T_m^k\wedge T_{m+1}^1\right]\right\}\\
&~\cap\left\{S_n\ne L_m^{k-1}~\text{for all}~n\in(T_m^{k-1},T_m^k\wedge T_{m+1}^1]\right\},
\end{aligned}
$$
and
$$
\widetilde{A}_m^k
:=
\left\{S_n\notin\{L_m^1,\ldots,L_m^{k-2}\}~\text{for all}~ n\in\left[T_m^{k-1}, \left(T_m^{k-1}+
\frac{m^2}{16}
\right)  \wedge T_m^k\wedge T_{m+1}^1\right]\right\}.
$$
For $k\geq 1$ and $m\geq 1$, we set
\begin{equation}\label{eq:2.2}
B_m^k:=\bigcap_{j=1}^k A_m^j,
~~
\widetilde{B}_m^k:=\bigcap_{j=1}^k \widetilde{A}_m^j.
\end{equation}

For $k\geq 1$ and $m\geq 2$, define
\begin{equation}\label{eq:2.3}
C_m^k:=\left\{\exists n\geq1:\#\mathcal{K}(n)=k
\text{ and }\xi^*(n)=m\right\}.
\end{equation}
Since before time $T_{m+1}^1$ no site has local time larger than $m$ and  at time $T_m^k$ exactly $k$ sites have local time at least $m$, it is easy to see that
\begin{equation}\label{eq:2.3-a}
C_m^k=\{T_m^k<T_{m+1}^1\}.
\end{equation}
If $\xi^*(n)\to\infty$ almost surely, we have that for every $k\ge1$,
$$
\left\{\#\mathcal{K}(n)\ge k\text{ infinitely often in}\ n\right\}
\stackrel{\mathrm{a.s.}}{=}
\left\{C_m^k\text{ infinitely often in}\ m\right\}.
$$
It is easy to see from the definitions that $C_m^k\in\mathcal{F}_{m+1}^1$ and
\begin{equation}\label{eq:2.4}
C_m^k=B_m^k\cap\widetilde{B}_m^k.
\end{equation}

\subsection{Proof of Theorem \ref{thm:1.2}}

To prove Theorem \ref{thm:1.2}, we need the following two lemmas.

\begin{lem}\label{lem:2.1}
It holds that
  \begin{equation}\label{eq:2.5}
    \mathbf{P} \left\{\lim_{n\to \infty} \frac{\xi^*(n)}{\log n} =\lambda
    \right\}=1,
  \end{equation}
where  $\lambda:=-1/\log (1-\gamma)$  with the convention that $\lambda=0$ if $\gamma=1$.
\end{lem}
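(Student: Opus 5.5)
Here $\gamma\in(0,1)$, so $\lambda=-1/\log(1-\gamma)\in(0,\infty)$ and the convention for $\gamma=1$ is not needed. The local time at a fixed site $z$ is geometric. Once the walk hits $z$, the number of visits to $z$ has $\mathbf P(\xi(z,\infty)\ge m)\le (1-\gamma)^{m-1}$, with equality given that $z$ is hit. I will prove the upper bound $\limsup \xi^*(n)/\log n\le\lambda$ by a union bound over sites, and the lower bound $\liminf\ge\lambda$ by using many essentially independent sites along the trajectory.

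\emph{Upper bound.} Fix $\varepsilon>0$ and set $m_n=\lceil(1+\varepsilon)\lambda\log n\rceil$. Up to time $n$ only sites of the form $S_j$ with $j\le n$ are visited. By the strong Markov property at the first visit to $z$, $\mathbf P(\xi(z,\infty)\ge m,\ z \text{ visited})\le (1-\gamma)^{m-1}$. Conditioning on $j$ being the first visit time of $S_j$ and summing over $j\le n$ gives
\[
\mathbf P(\xi^*(n)\ge m_n)\le n(1-\gamma)^{m_n-1}\le C n^{-\varepsilon}.
\]
This is not summable, so I apply it along $n_k=2^k$, which gives a summable bound. Since $\xi^*$ is monotone in $n$, the Borel--Cantelli lemma and interpolation give $\limsup\xi^*(n)/\log n\le(1+\varepsilon)\lambda$.

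\emph{Lower bound.} Fix $\varepsilon>0$ and set $m_n=\lfloor(1-\varepsilon)\lambda\log n\rfloor$. I need disjoint blocks in which a fresh site collects $m_n$ visits quickly and independently. Let $p_{m,L}$ be the probability that the walk started at $0$ returns to $0$ at least $m-1$ times within $L$ steps. Since $\mathbf P(\hat\tau_0<\infty)=1-\gamma$, there is $L_0$ with $\mathbf P(\hat\tau_0\le L_0)\ge(1-\gamma)e^{-\delta}$. Chaining $m-1$ such returns by the strong Markov property gives $p_{m,mL_0}\ge((1-\gamma)e^{-\delta})^{m-1}$. Now split $[0,n]$ into $N=\lfloor n/(m_nL_0)\rfloor$ consecutive blocks of length $m_nL_0$. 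The events ``the walk returns $m_n-1$ times to its starting position of the block, within the block'' are independent across blocks by the Markov property, and each has probability at least $((1-\gamma)e^{-\delta})^{m_n}\ge n^{-(1-\varepsilon)(1+\delta')}$. Choosing $\delta$ small makes this at least $n^{-1+\varepsilon/2}$. Then
\[
\mathbf P(\xi^*(n)<m_n)\le(1-n^{-1+\varepsilon/2})^{N}\le\exp\bigl(-c\,n^{\varepsilon/2}/\log n\bigr),
\]
which is summable. Borel--Cantelli then gives $\liminf\xi^*(n)/\log n\ge(1-\varepsilon)\lambda$.

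The main obstacle is the lower bound. The $m_n$ visits must happen within a time window short enough to fit about $n^{1-o(1)}$ independent trials. The truncation $\hat\tau_0\le L_0$, losing only a factor $e^{-\delta}$ per return, is the step that makes this work. Letting $\varepsilon\downarrow0$ along a countable sequence completes \eqref{eq:2.5}.
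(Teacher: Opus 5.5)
Your proposal is correct and matches the paper's proof: the upper bound is the same union bound with the geometric tail $(1-\gamma)^{m-1}$, taken along $n_k=2^k$ and interpolated by monotonicity, and the lower bound is the same independent-blocks argument with many quick returns to a block's starting point. The only difference is cosmetic. You truncate each return at a fixed $L_0$, losing $e^{-\delta}$ per return, and use blocks of length $m_nL_0$. The paper truncates at $u_n$ and uses $\gamma_{u_n}\downarrow\gamma$, with blocks of length $u_n^2$.
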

The proof of Lemma \ref{lem:2.1} is postponed to Section 2.3. Using
$(A_n~ \text{\rm i.o.})=\cap_{n=1}^{\infty}\cup_{k=n}^{\infty}A_k$, we can easily get the following result.

\medskip

\begin{lem}\label{lem:2.2}
Let $\left\{A_n\right\}_{n \geq 1}$ be a sequence of events and $c \in(0,1]$. If
$\mathbf{P}\left(A_n\right) \geq c$ for all $n\ge 1$,  then
$\mathbf{P}\left(A_n~ \text{\rm i.o.} \right) \geq c .$
\end{lem}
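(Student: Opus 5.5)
The statement to prove is Lemma~\ref{lem:2.2}: if $\mathbf{P}(A_n)\ge c$ for every $n\ge1$, then $\mathbf{P}(A_n\ \text{i.o.})\ge c$. The argument is continuity of probability from above applied to the tail unions, as the remark preceding the lemma suggests.

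For each $n\ge1$ set $D_n:=\bigcup_{k=n}^{\infty}A_k$. These sets are decreasing in $n$, and by definition $(A_n~\text{i.o.})=\bigcap_{n\ge1}D_n$.

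\emph{Lower bound for each tail union.} Since $A_n\subseteq D_n$, monotonicity of $\mathbf{P}$ together with the hypothesis gives
\[
\mathbf{P}(D_n)\ge\mathbf{P}(A_n)\ge c\qquad\text{for every } n\ge1 .
\]

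\emph{Passage to the limit.} The sequence $(D_n)_{n\ge1}$ is decreasing and $\mathbf{P}$ is a finite measure, so continuity from above applies:
\[
\mathbf{P}(A_n~\text{i.o.})=\mathbf{P}\Bigl(\bigcap_{n\ge1}D_n\Bigr)=\lim_{n\to\infty}\mathbf{P}(D_n)\ge c .
\]

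An equivalent route is through Fatou's lemma for indicators, using $\limsup_n 1_{A_n}=1_{\{A_n~\text{i.o.}\}}$:
\[
\mathbf{P}(A_n~\text{i.o.})=\mathbf{E}\Bigl[\limsup_{n\to\infty}1_{A_n}\Bigr]\ge\limsup_{n\to\infty}\mathbf{P}(A_n)\ge c .
\]
The inequality here is the reverse Fatou lemma, which applies because the indicators are bounded by the integrable constant $1$.

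No step is a real obstacle. The only point to check is that continuity from above needs a finite measure, which holds since $\mathbf{P}$ is a probability measure.

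In the proof of Theorem~\ref{thm:1.2} I would apply the lemma with $A_m=C_m^k$, once a uniform lower bound $\mathbf{P}(C_m^k)\ge c_k>0$ has been established. Kolmogorov's $0$-$1$ law, used together with the transience of the walk, would then upgrade $\mathbf{P}(C_m^k~\text{i.o.})\ge c_k$ to probability one.
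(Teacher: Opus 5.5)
Your proof is correct and is the paper's argument: the paper justifies the lemma only by the identity $(A_n~\text{i.o.})=\bigcap_{n\ge1}\bigcup_{k\ge n}A_k$. That identity gives your bound $\mathbf{P}(\bigcup_{k\ge n}A_k)\ge c$ for each $n$, and continuity from above then finishes. Your reverse-Fatou variant is an equivalent one-line alternative.
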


\begin{proof}[\bf Proof of Theorem~\ref{thm:1.2}]
Fix $k\geq 1$.
Define $\gamma_+ := \mathbf{P}(S_n>0,\ \forall n\geq1)$. \cite[Lemma~2.1]{Vilkas2025} yields $\gamma_+>0$. For $m\geq1$, let $\widetilde{C}_m^1:=\Omega,$ and for $2\leq j\leq k$, define
$$
\widetilde{C}_m^j :=\left\{S_n>S_{T_m^{j-1}}\ \mbox{for all}\ n \in \left(T_m^{j-1}, T_m^j\right]\right\}.
$$
For $2\leq j\leq k$, the event
$\bigcap_{i\geq1}\left\{S_{T_m^{j-1}+i}>S_{T_m^{j-1}}\right\}$ is contained in $\widetilde{C}_m^j$.
Hence the strong Markov property at $T_m^{j-1}$ and spatial homogeneity give
$$
\mathbf{P}\left(\widetilde{C}_m^j\mid\mathcal{F}_m^{j-1}\right)\geq \gamma_+, ~~\text{a.s..}
$$
Since
$\bigcap_{i=1}^{j-1}\widetilde{C}_m^i \in\mathcal{F}_m^{j-1},$ successive conditioning gives, for $k\geq 2$,
\begin{equation}\label{eq:2.6-aa}
\mathbf{P}\left(\bigcap_{j=1}^{k}\widetilde{C}_m^j\right)
=
\mathbf{E}\left[\mathbf{1}_{\bigcap_{j=1}^{k-1}\widetilde{C}_m^j}
\mathbf{P}\left(\widetilde{C}_m^k\mid\mathcal{F}_m^{k-1}\right)\right]
\geq
\gamma_+ \mathbf{P}\left(\bigcap_{j=1}^{k-1}\widetilde{C}_m^j\right)
\geq
\gamma_+^{k-1}\mathbf{P}(\widetilde{C}_m^1)
=
\gamma_+^{k-1}.
\end{equation}
When $k=1$, the display above is immediate.

By the definitions of $\{\widetilde{C}_m^i, 1\leq i\leq k\}$ and $\{T_m^j, 1\leq j\leq k\}$,
one can easily check by induction that for any $1\leq j\leq k$,
\begin{align}\label{eq:2.7-aa}
\bigcap_{i=1}^{j}\widetilde{C}_m^i\subseteq\{T_m^j<T_{m+1}^1\}.
\end{align}
Combining (\ref{eq:2.3-a}),  \eqref{eq:2.6-aa} and \eqref{eq:2.7-aa}, we obtain
\begin{equation}\label{eq:2.8-aa}
\mathbf{P}(C_m^k)\geq\gamma_+^{k-1}>0,~~ \forall m\geq1.
\end{equation}

Using the transience of $\{S_n\}_{n\ge0}$ and Lemma~\ref{lem:2.1}, one can easily see that the favorite sites process $(\mathcal{K}(n))_{n\ge1}$ is transient, i.e.,  no fixed site can be a favorite site infinitely often.

Now we prove \eqref{thm:1.2-a}. Following the argument in \cite[Section~3.3]{DS2018}, we fix $N\geq1$ and  define
$$
\bar{S}_n^N:=S_{N+n}-S_N,
~~
\bar{\xi}^N(x,n):=\sum_{i=0}^n\mathbf{1}_{\{\bar{S}_i^N=x\}},\ n\geq 0.
$$
Let
$$
\bar{\mathcal{K}}^N(n)
:=
\left\{x\in\mathbb{Z}:\bar{\xi}^N(x,n)
=
\max_{y\in\mathbb{Z}}\bar{\xi}^N(y,n)\right\}.
$$
Since $(\bar{S}_n^N)_{n\geq0}$ has the same law as $(S_n)_{n\geq0}$, its favorite-site process is also transient. Therefore, almost surely, for all sufficiently large $n$,
$$
\mathcal{K}(N+n)\cap\{S_0,\ldots,S_{N-1}\}=\varnothing
~~\text{and}~~
\bar{\mathcal{K}}^N(n)\cap\{S_0-S_N,\ldots,S_{N-1}-S_N\}=\varnothing.
$$
For every $x\notin\{S_0-S_N,\ldots,S_{N-1}-S_N\},$ we have $\xi(S_N+x,N+n)=\bar{\xi}^N(x,n).$
Consequently, almost surely, for all sufficiently large $n$,
$\mathcal{K}(N+n)=\left\{S_N+x:x\in\bar{\mathcal{K}}^N(n)\right\}.$
It follows that
$$
\mathbf{P}\left(\left\{\#\mathcal{K}(n)=k\text{ i.o.}\right\}
\mathbin{\triangle}
\left\{\#\bar{\mathcal{K}}^N(r)=k\text{ i.o.}\right\}\right)=0.
$$
Moreover,
$
\left\{\#\bar{\mathcal{K}}^N(n)=k\text{ i.o.}\right\}\in\sigma(X_{N+1},X_{N+2},\ldots).
$
Hence
$$
\limsup_{N\to\infty}\left\{\#\bar{\mathcal{K}}^N(n)=k\text{ i.o.}\right\}
\in\bigcap_{N\geq1}\sigma(X_{N+1},X_{N+2},\ldots),
$$
and
$$
\mathbf{P}\left(\left\{\#\mathcal{K}(n)=k\text{ i.o.}\right\}
\mathbin{\triangle}
\limsup_{N\to\infty}\left\{\#\bar{\mathcal{K}}^N(n)=k\text{ i.o.}\right\}\right)=0.
$$
Thus $\{\#\mathcal{K}(n)=k\text{ i.o.}\}$ belongs to the completed tail $\sigma$-field of $(X_n)_{n\geq1}$. Kolmogorov's 0-1 law and \eqref{eq:2.8-aa} yield
$\mathbf{P}\left(\#\mathcal{K}(n)=k\text{ i.o.}\right)=1.$
The proof is complete.
\end{proof}

\subsection{Proof of Theorem \ref{thm:1.3}}\label{sec:3}

Throughout  this subsection, we assume that
$\sigma^2:=\mathbf{E}[X_1^2]<\infty$.
To prove Theorem \ref{thm:1.3}, we need four lemmas (Lemmas 2.3, 2.4, 2.6 and 2.7).
Lemmas \ref{pro:2.3} and \ref{lem:2.4} play important roles in the proof of Theorem \ref{thm:1.3}.

\begin{lem}\label{pro:2.3}
There exist constants $C>0$ and $a\in(1, 1+\sqrt{1-\gamma})$ such that for any  integer
$u\geq 0$,
\begin{equation}\label{eq:2.9}
\sup_{z\in\mathbb{Z}\setminus\{0\}}\mathbf{P}\left(\xi(0,\infty)+\xi(z,\infty)\geq u\right)
\leq
C\left(1-\frac{\gamma}{a}\right)^u.
\end{equation}
\end{lem}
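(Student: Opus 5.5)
The approach is to view the successive visits of the walk to the two-point set $\{0,z\}$ as a killed Markov chain on $\{0,z\}$ and to bound its spectral radius uniformly in $z$. Fix $z\neq0$ and let $Q=Q_z$ be the substochastic kernel on $E=\{0,z\}$ with $Q(x,y)=\mathbf P_x(\text{the first visit to }\{0,z\}\text{ at a time }n\ge1\text{ occurs at }y)$. By the strong Markov property, $\xi(0,\infty)+\xi(z,\infty)$ under $\mathbf P$ is the lifetime of this chain started at $0$. Hence
$$\mathbf P\left(\xi(0,\infty)+\xi(z,\infty)\ge u\right)=\sum_{y\in E}Q^{(u-1)}(0,y)\quad (u\geq 1).$$
By spatial homogeneity, write $p:=Q(0,0)$, $q:=Q(0,z)$, $r:=Q(z,0)$, $s:=Q(z,z)$.

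\textbf{Step 1: identify the entries via $\gamma$.} Decompose the event of returning to $0$ according to whether $z$ is visited first, and sum over the geometric number of $z$-loops. This gives
$$1-\gamma=p+\frac{qr}{1-s},\qquad\text{and by the same argument from }z,\qquad 1-\gamma=s+\frac{qr}{1-p}.$$
Subtracting these two identities yields $(1-\gamma)(p-s)=p-s$, so $p=s$ because $\gamma>0$. It then follows that $qr=(1-\gamma-p)(1-p)$.

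\textbf{Step 2: eigenvalues.} The eigenvalues of $Q$ are $\lambda_\pm=p\pm\sqrt{qr}$. Set
$$\rho(p):=p+\sqrt{(1-\gamma-p)(1-p)},\qquad p\in[0,1-\gamma].$$
This $g=\rho$ is concave, since the square root of a product of two nonnegative decreasing affine functions is concave. Its derivative at $0$ is $1-\frac{2-\gamma}{2\sqrt{1-\gamma}}\le0$. Hence $\rho$ is nonincreasing on $[0,1-\gamma]$ and $\rho(p)\le\rho(0)=\sqrt{1-\gamma}$.

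Now observe that $1-\gamma/a$ ranges over $(1-\gamma,\sqrt{1-\gamma})$ as $a$ ranges over $(1,1+\sqrt{1-\gamma})$, because $\gamma/(1+\sqrt{1-\gamma})=1-\sqrt{1-\gamma}$. So what we need is that $\rho$ stays strictly below $\sqrt{1-\gamma}$ uniformly in $z$. The plan is to obtain this from a uniform lower bound $p_z\ge c_0>0$; then $\rho\le\rho(c_0)=:\rho^*<\sqrt{1-\gamma}$.

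\textbf{Step 3: uniform bound on the powers.} The powers of $Q$ can be computed explicitly:
$$Q^{(n)}(0,0)=\tfrac12(\lambda_+^n+\lambda_-^n),\qquad Q^{(n)}(0,z)=q\,\frac{\lambda_+^n-\lambda_-^n}{\lambda_+-\lambda_-}\le q\,n\,(\rho^*)^{n-1},$$
where the last bound uses the mean value theorem and is valid even as $qr\to0$. Since $|\lambda_-|\le\lambda_+$, both terms are at most $(n+1)(\rho^*)^{n-1}$. Fix any $a$ with $1-\gamma/a\in(\rho^*,\sqrt{1-\gamma})$. Then $(n+1)(\rho^*)^{n-1}\le C(1-\gamma/a)^n$, and this gives \eqref{eq:2.9} with constants independent of $z$.

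\textbf{Main obstacle: the uniform bound $\inf_{z\ne0}p_z>0$.} Here $p_z=\mathbf P(\text{return to }0\text{ before visiting }z)$. Since $\gamma<1$, there is a fixed finite path $0=x_0,x_1,\dots,x_\ell=0$ with positive probability $c_1$, using jumps in the support of $X_1$. For every $z$ outside the finite set $\{x_1,\dots,x_{\ell-1}\}$ this gives $p_z\ge c_1$. It remains to show $p_z>0$ for each of the finitely many exceptional $z$.

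For such a $z$, I would try to build a returning path that avoids $z$. The first idea is to choose a different minimal loop, using a negative jump from \eqref{eq:1.1} together with the positive drift. Failing that, I would argue directly: if $p_z=0$, then every return to $0$ passes through $z$, so $\rho=\sqrt{1-\gamma}$ for that $z$. In that case I would instead check that the corresponding $z$ with $p_{-z}$ (the roles of $0$ and $z$ swapped) still has $qr<1-\gamma$ strictly. This point is the step needing the most care, and I expect it may require an additional case analysis on the support of $X_1$.
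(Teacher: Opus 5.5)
Your Steps 1--3 are correct. They differ from the paper only in derivation: you use renewal identities to get $p=s$ and $qr=(1-\gamma-p)(1-p)$ and then write the powers of $Q$ explicitly, whereas the paper identifies $(I-Q_z)^{-1}$ with the Green matrix and applies Cayley--Hamilton. Both routes reduce to the same key estimate. Indeed $\mathbf{P}(\hat{\tau}_z<\infty)=q/(1-p)$ and $\mathbf{P}(\hat{\tau}_{-z}<\infty)=r/(1-p)$, so $\mathbf{P}(\hat{\tau}_z<\infty)\mathbf{P}(\hat{\tau}_{-z}<\infty)=(1-\gamma-p_z)/(1-p_z)=(1-\gamma)-\gamma p_z/(1-p_z)$. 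Hence the paper's bound $\kappa<\sqrt{1-\gamma}$ in \eqref{eq:2.14} is exactly your $\inf_{z\neq0}p_z>0$.

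That uniform bound is the step you leave unproved, and it is the real content of the lemma, not a technicality. If $p_z=0$ for a single $z$, then $qr=1-\gamma$, $\rho_z=\sqrt{1-\gamma}$, and $\mathbf{P}(\xi(0,\infty)+\xi(z,\infty)\geq u)\geq c\,(1-\gamma)^{u/2}$. This violates \eqref{eq:2.9} for every $a<1+\sqrt{1-\gamma}$. Your fallback of swapping the roles of $0$ and $z$ cannot help, because your own Step 1 gives $p_{-z}=Q_{-z}(0,0)=Q_z(z,z)=s_z=p_z$. And ``choose a different loop, possibly with a case analysis on the support of $X_1$'' is not yet an argument.

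The missing idea is the cyclic rotation used in the paper.
\begin{itemize}
\item Take any zero-sum word $(y_1,\ldots,y_r)$ with $c_0:=\mathbf{P}(X_1=y_1,\ldots,X_r=y_r)>0$; it exists because $\gamma<1$.
\item Rotate the word to start right after an index $q$ where the partial sum $s_q$ is maximal. Every partial sum of the rotated word is then $s_j-s_q\leq0$. Since the $X_i$ are i.i.d., the rotated word still has probability $c_0$.
\item A walk following the rotated word stays in $(-\infty,0]$ and is back at $0$ by time $r$. So its first return to $0$ precedes any visit to $z\geq1$, giving $p_z\geq c_0$ for \emph{all} $z\geq1$ at once, with no exceptional set.
\item For $z\leq-1$, either rotate at a minimal partial sum (so all partial sums are $\geq0$) or use $p_z=p_{-z}$.
\end{itemize}
With $c_0\le p_z\le 1-\gamma$ for all $z\neq0$, your Step 2 gives $\sup_z\rho(p_z)\leq\rho(c_0)<\sqrt{1-\gamma}$. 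The inequality is strict because $\rho'(0)=1-(2-\gamma)/(2\sqrt{1-\gamma})<0$ for $\gamma\in(0,1)$, so the concave function $\rho$ is strictly decreasing. Your Step 3 then finishes the proof.
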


\begin{proof}[\bf Proof]
 Obviously, we need only to prove (\ref{eq:2.9}) for $u>0$.
This proof is based on the finite set occupation time method of Cs\'aki et al.~\cite{CFRRS2005}. We adapt their Green matrix spectral argument to our setting and establish a uniform estimate.

Let
$G(x):=\sum_{n=0}^{\infty}\mathbf{P}(S_n=x),~ x\in\mathbb{Z}$,
be the Green function of $\{S_n\}_{n\geq 0}$.
By \cite[Proposition 1.4]{S2001},
\begin{align}\label{eq:2.9-1}
G(0)=\mathbf{E}[\xi(0,\infty)]=\frac{1}{1-\mathbf{P}(\hat{\tau}_0<\infty)}=\frac{1}{\gamma}.
\end{align}
For $x\neq0$, the first entrance decomposition \cite[Proposition 1.5; Eq.(4), p.34]{S2001} and spatial homogeneity give
\begin{equation}\label{eq:2.10}
G(x)
=
\mathbf{P}(\hat{\tau}_x<\infty)G(0)
=
\frac{\mathbf{P}(\hat{\tau}_x<\infty)}{\gamma}.
\end{equation}

Fix $z\in\mathbb{Z}\setminus\{0\}$. Define the successive hitting times of the set $\{0,z\}$ by
$$
\bar{\tau}_z^0:=0,
~~~
\bar{\tau}_z^{k+1}:=\inf\left\{n>\bar{\tau}_z^k:S_n\in\{0,z\}\right\},~~k\geq 0.
$$
We use the convention that $\inf\emptyset=\infty$, and if $\bar{\tau}_z^k=\infty$, we  define  $\bar{\tau}_z^{k+1}:=\infty$. Let $Q_z$ be the sub-stochastic matrix,  indexed by $\{0,z\}$, defined by
$$
Q_z(x,y)
:=
\mathbf{P}_x\left(S_{\bar{\tau}_z^1}=y,\,\bar{\tau}_z^1<\infty\right),
~~x,y\in\{0,z\}.
$$
The strong Markov property at $\bar{\tau}_z^k$ gives
$$
\begin{aligned}
&~~~~\mathbf{P}_x\left(S_{\bar{\tau}_z^{k+1}}=y,\,\bar{\tau}_z^{k+1}<\infty\right)\\
&=
\mathbf{E}_x\left[\mathbf{1}_{\{\bar{\tau}_z^k<\infty\}} \mathbf{P}_{S_{\bar{\tau}_z^k}} \left(S_{\bar{\tau}_z^1}=y,\,\bar{\tau}_z^1<\infty\right)\right]
=
\mathbf{E}_x\left[ \mathbf{1}_{\{\bar{\tau}_z^k<\infty\}}Q_z(S_{\bar{\tau}_z^k},y)\right]\\
&=
\mathbf{E}_x\left[\mathbf{1}_{\{\bar{\tau}_z^k<\infty\}}
\sum_{w\in\{0,z\}}\mathbf{1}_{\{S_{\bar{\tau}_z^k}=w\}}Q_z(w,y)\right]
=
\sum_{w\in\{0,z\}}\mathbf{E}_x\left[
\mathbf{1}_{\{S_{\bar{\tau}_z^k}=w,~\bar{\tau}_z^k<\infty\}}\right]Q_z(w,y)\\
&=
\sum_{w\in\{0,z\}}
\mathbf{P}_x\left(S_{\bar{\tau}_z^k}=w,~\bar{\tau}_z^k<\infty\right)Q_z(w,y).
\end{aligned}
$$
Induction on $k$  gives
\begin{equation}\label{eq:2.11}
Q_z^{(k)}(x,y)
=
\mathbf{P}_x\left(S_{\bar{\tau}_z^k}=y,\,\bar{\tau}_z^k<\infty\right),~~k\geq1.
\end{equation}

For every $x,y\in\{0,z\}$,
$\xi(y,\infty)
=
\mathbf{1}_{\{x=y\}}
+
\sum_{k=1}^{\infty}\mathbf{1}_{\{\bar{\tau}_z^k<\infty,\, S_{\bar{\tau}_z^k}=y\}},
~
\mathbf{P}_x\text{-a.s..}
$
Thus by  \eqref{eq:2.11} and spatial homogeneity, we have
$$
\begin{aligned}
\mathbf{1}_{\{x=y\}}+\sum_{k=1}^{\infty}Q_z^{(k)}(x,y)
=
\mathbf{E}_x[\xi(y,\infty)]
=
\sum_{n=0}^{\infty}\mathbf{P}_x(S_n=y)
=
G(y-x).
\end{aligned}
$$
Consequently,
$$
I+\sum_{k=1}^{N}Q_z^{(k)} \to
\begin{pmatrix}
G(0)&G(z)\\
G(-z)&G(0)
\end{pmatrix},
~~N\to\infty,
$$
and hence
$$
Q_z^{(N+1)}
=
\left(I+\sum_{k=1}^{N+1}Q_z^{(k)}\right)-\left(I+\sum_{k=1}^{N}Q_z^{(k)}\right)
\to 0.
$$
Moreover,
$$
(I-Q_z)\left(I+\sum_{k=1}^{N}Q_z^{(k)}\right)
=\left(I+\sum_{k=1}^{N}Q_z^{(k)}\right)(I-Q_z)
=I-Q_z^{(N+1)}.
$$
Letting $N\to\infty$, we obtain
\begin{equation}\label{eq:2.12}
(I-Q_z)^{-1}=I+\sum_{k=1}^{\infty}Q_z^{(k)}
=
\begin{pmatrix}
G(0)&G(z)\\
G(-z)&G(0)
\end{pmatrix}.
\end{equation}

Let $\rho_z$ and  $\rho(Q_z^{(N)})$  denote the spectral radius of $Q_z$ and $Q_z^{(N)}$ respectively. The convergence $Q_z^{(N)}\to0$ gives
$$
\rho_z^N=\rho(Q_z^{(N)})\leq \|Q_z^{(N)}\|_{\infty}\to0,
$$
and hence $\rho_z<1$. Since $Q_z$ is a finite non-negative matrix, the Perron-Frobenius theorem \cite[Theorem~1.28]{BM2005} shows that  $\rho_z$ is a non-negative eigenvalue of $Q_z$.  If $\eta$ is any eigenvalue of $Q_z$, then $(1-\eta)^{-1}$ is an
eigenvalue of $(I-Q_z)^{-1}$ and
$$
\left|\frac{1}{1-\eta}\right|
\leq
\frac{1}{1-|\eta|}
\leq
\frac{1}{1-\rho_z}.
$$
Equality holds for $\eta=\rho_z$. It follows that $\frac{1}{1-\rho_z}$ is the largest eigenvalue of $(I-Q_z)^{-1}$. By \eqref{eq:2.12}, the characteristic polynomial of $(I-Q_z)^{-1}$ is
$$
\det
\begin{pmatrix}
G(0)-t&G(z)\\
G(-z)&G(0)-t
\end{pmatrix}
=(G(0)-t)^2-G(z)G(-z).
$$
Its eigenvalues are $G(0)\pm\sqrt{G(z)G(-z)}.$ Therefore,
$\frac{1}{1-\rho_z}=G(0)+\sqrt{G(z)G(-z)}.$
Thus by (\ref{eq:2.9-1}) and \eqref{eq:2.10}, we have
\begin{equation}\label{eq:2.13}
\rho_z
=
1-\frac{\gamma}{1+\sqrt{\mathbf{P}(\hat{\tau}_z<\infty)\mathbf{P}(\hat{\tau}_{-z}<\infty)}}.
\end{equation}

We next prove
\begin{equation}\label{eq:2.14}
\kappa
:=
\sup_{z\geq1}\sqrt{\mathbf{P}(\hat{\tau}_z<\infty)\mathbf{P}(\hat{\tau}_{-z}<\infty)}
<\sqrt{1-\gamma}.
\end{equation}
Since $\mathbf{P}(\hat{\tau}_0<\infty)=1-\gamma>0$, there exist $r\geq1$ and $y_1,\ldots,y_r\in\mathbb{Z}$ such that
$$
c_0:=\mathbf{P}(X_1=y_1,\ldots,X_r=y_r)>0,\ \mbox{and}\ \sum_{i=1}^r y_i=0.
$$
Let $s_j:=\sum_{i=1}^j y_i,~0\leq j\leq r,$ with $s_0=s_r=0$, and choose $q\in\{0,\ldots,r-1\}$ such that $s_q=\max_{0\leq j\leq r-1}s_j.$
Then $(y_{q+1},\ldots,y_r,y_1,\ldots,y_q)$ has all partial sums nonpositive, since they are equal to $s_j-s_q $ or $s_j-s_q+s_r.$
Thus by permuting the indices, we may assume that $\sum_{i=1}^j y_i\leq0, \forall 1\leq j\leq r.$
Hence, for every $z\geq1$,
$$
\{X_1=y_1,\ldots,X_r=y_r\}\cap\{S_{r+n}\neq0,\ \forall n\geq1\}
\subseteq
\{\hat{\tau}_0<\infty\}\setminus\{\hat{\tau}_z<\infty,\ S_n=0\ \text{for some}\ n>\hat{\tau}_z\}.
$$
Therefore, by the strong Markov property at $\hat{\tau}_z$, spatial homogeneity and independence, we have
$$
\begin{aligned}
\mathbf{P}(\hat{\tau}_z<\infty)\mathbf{P}(\hat{\tau}_{-z}<\infty)
&=\mathbf{E}[\mathbf{1}_{\{\hat{\tau}_z<\infty\}} \mathbf{P}(S_n=0 ~\text{for some}~n>\hat{\tau}_z \ |  \ \mathcal{F}_{\hat{\tau}_z})]\\
&=\mathbf{P}\left(\hat{\tau}_z<\infty,\,S_n=0~\text{ for some}~ n>\hat{\tau}_z\right)\\
&\leq
\mathbf{P}(\hat{\tau}_0<\infty)
-
\mathbf{P}\left(X_1=y_1,\ldots,X_r=y_r,\,S_{r+n}\neq0,\ n\geq1\right)\\
&=
1-\gamma-c_0\gamma,
\end{aligned}
$$
where
$\mathcal{F}_{\hat{\tau}_z}
:=
\left\{A\in\mathcal{F}_\infty:A\cap\{\hat{\tau}_z\leq n\} \in\mathcal{F}_n, \ \forall \, n\geq0 \right\}.$
Taking the supremum over $z\geq1$ gives
$\kappa\leq\sqrt{1-\gamma-c_0\gamma}<\sqrt{1-\gamma},$
which implies  \eqref{eq:2.14}. By symmetry in $z$ and $-z$, the same bound holds uniformly for all $z\in\mathbb{Z}\setminus\{0\}$.

Choose $a\in (1+\kappa, 1+\sqrt{1-\gamma})$.
By \eqref{eq:2.13}-\eqref{eq:2.14} and symmetry in $z$ and $-z$, we have
$\sup_{z\in\mathbb{Z}\setminus\{0\}}\rho_z\leq1-\frac{\gamma}{1+\kappa}<1-\frac{\gamma}{a}<1.$
Let $\eta_1$ and $\eta_2$ be the eigenvalues of $Q_z$. Then
\begin{equation}\label{eq:2.15}
|\eta_1|\vee|\eta_2|\leq\rho_z\leq 1-\frac{\gamma}{1+\kappa}, ~~ \forall z\in\mathbb{Z}\setminus\{0\}.
\end{equation}
Suppose first that $\eta_1\neq\eta_2$. The Cayley-Hamilton theorem \cite[Theorem~A.10]{BM2005} gives
$$
Q_z^{(m)}
=
\frac{\eta_1^m-\eta_2^m}{\eta_1-\eta_2}Q_z
-\eta_1\eta_2 \frac{\eta_1^{m-1}-\eta_2^{m-1}}{\eta_1-\eta_2}I,~~m\geq1.
$$
Since $Q_z$ is sub-stochastic, $\|Q_z\|_{\infty}\leq1.$
It follows from \eqref{eq:2.15} and $\frac{\eta_1^m-\eta_2^m}{\eta_1-\eta_2}=\sum_{j=0}^{m-1}\eta_1^{m-1-j}\eta_2^j$ that
\begin{align}\label{eq:2.15-1}
\|Q_z^{(m)}\|_{\infty}
\leq
m\left(1-\frac{\gamma}{1+\kappa}\right)^{m-1}
+(m-1)\left(1-\frac{\gamma}{1+\kappa}\right)^m
\leq
C(m+1)\left(1-\frac{\gamma}{1+\kappa}\right)^m.
\end{align}
If $\eta_1=\eta_2=:\eta$, then  the Cayley--Hamilton theorem \cite[Theorem~A.10]{BM2005} yields, for every $m\geq2$,
$$
Q_z^{(m)}=\eta^mI+m\eta^{m-1}(Q_z-\eta I),
$$
and the same estimate \eqref{eq:2.15-1} follows from \eqref{eq:2.15} and $\|Q_z\|_{\infty}\leq1$.

The inequality $0<\frac{1-\gamma/(1+\kappa)}{1-\gamma/a}<1$ implies
$
\sup_{m\geq0}m\left(\frac{1-\gamma/(1+\kappa)}{1-\gamma/a}\right)^{m-1} <\infty,
$
which together with (\ref{eq:2.15-1}) implies that there exists $C>0$ such that for all $z\in\mathbb{Z}\setminus\{0\}$ and $m\geq1$,
\begin{align}\label{eq:2.16}
\|Q_z^{(m-1)}\|_{\infty}
\leq C \left[ \sup_{m\geq1} m\left(\frac{1-\gamma/(1+\kappa)}{1-\gamma/a}\right)^{m-1}\right]  \left(1-\frac{\gamma}{a}\right)^{m-1}
\leq
C\left(1-\frac{\gamma}{a}\right)^{m-1}.
\end{align}
For every integer $m\geq1$, \eqref{eq:2.11} gives
\begin{align}\label{eq:2.16-1}
\mathbf{P}\left(\xi(0,\infty)+\xi(z,\infty)\geq m\right)
=
\mathbf{P}\left(\bar{\tau}_z^{m-1}<\infty\right)
=
\sum_{y\in\{0,z\}}Q_z^{(m-1)}(0,y)\leq  \|Q_z^{(m-1)}\|_{\infty}.
\end{align}
For $u>0$, take $m=\lceil u\rceil$. By \eqref{eq:2.16} and \eqref{eq:2.16-1}, we get
$$
\sup_{z\in\mathbb{Z}\setminus\{0\}}\mathbf{P}\left(\xi(0,\infty)+\xi(z,\infty)\geq u\right)
\leq
C\left(1-\frac{\gamma}{a}\right)^{\lceil u\rceil-1}
\leq
C\left(1-\frac{\gamma}{a}\right)^u.
$$
Hence \eqref{eq:2.9} holds and the proof is complete.
\end{proof}

Fix $\varepsilon\in(0,1)$. Recall that $\lambda=-1/\log(1-\gamma)$. For $n\geq 1$, define
$$
D_n:=
\left\{
\left(1-\frac{\varepsilon}{2}\right)\lambda\log n
\leq
\xi^*(n)
\leq
\left(2+\frac{\varepsilon}{2}\right)\lambda\log n
\right\}.
$$
To exclude  pairs of distinct thick points occurring within a short time interval, we set
\begin{align}\label{eq:2.16-2}
\bar{F}_n
:=
\sum_{i=1}^n\sum_{j=i+1}^{(i+(\lambda\log n)^2)\wedge n}
\mathbf{1}_{\left\{\min\{\xi(S_i,n),\xi(S_j,n)\}
\geq
(1-\varepsilon)\lambda\log n,\,S_i,S_j\notin S_{(i,j)},\,S_i\ne S_j\right\}},
\end{align}
and let
$$
E_n:=\{\bar{F}_n=0\}.
$$
Thus, on $E_n$, no two distinct thick points occur at times separated by no more than $(\lambda\log n)^2$.

The proofs of Lemmas~\ref{lem:2.4} and~\ref{lem:2.6} below follow the  method of \cite{HLOZ2025} for simple symmetric random walk in dimensions $d\geq3$ with suitable modifications.
The key estimate controls the probability that two distinct thick points occur within a time interval of length of order $(\log n)^2$. The details are postponed to Section 2.3.

Define
\begin{equation}\label{lem-2.4-0}
\delta:=-2\lambda\log\left(1-\frac{\gamma}{a}\right)-1,
\end{equation}
where $\lambda=-1/\log(1-\gamma)$ and $a$ is the positive constant in Lemma 2.3.  By the choice of $a$ in Lemma \ref{pro:2.3}, we know that  $\delta>0$.

\begin{lem}\label{lem:2.4}
For any $\iota>0$ satisfying $\iota\delta/2>1+\delta$ and any  $\varepsilon\in(0,1)$ satisfying $(1+\delta)(1-\varepsilon)^2>1+\frac{\delta}{2}$,
there exists a constant $C=C(\varepsilon,\iota)<\infty$ such that, for all sufficiently large $N$,
$$
\mathbf{P}\left(\bigcup_{n\ge N}(D_n\cap E_n)^c\right)\le C N^{-[(\varepsilon/2)\wedge(\delta/\iota)]}.
$$
Consequently,
$$
\mathbf{P}\left(\liminf_{n\to\infty} (D_n\cap E_n)\right)=1.
$$
\end{lem}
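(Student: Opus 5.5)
We have to bound $\mathbf{P}(\bigcup_{n\ge N} D_n^c)$ and $\mathbf{P}(\bigcup_{n\ge N} E_n^c)$ separately. For each we first control the events along a geometric or polynomial subsequence $n_\ell$. We then interpolate between consecutive $n_\ell$ using monotonicity: $\xi^*(n)$ and $\xi(x,n)$ are nondecreasing in $n$, and $\log n$ varies little between consecutive $n_\ell$.

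\textbf{The events $D_n$.} For the lower bound $\xi^*(n)\ge(1-\varepsilon/2)\lambda\log n$ we split $[0,n]$ into about $n^{1-\varepsilon/4}$ blocks of length $n^{\varepsilon/4}$ (up to logs). In each block the walk, started afresh by the strong Markov property, returns to its starting point $m:=(1-\varepsilon/2)\lambda\log n$ times within the block with probability at least $c(1-\gamma_{n^{\varepsilon/8}})^m\ge c\, n^{-(1-\varepsilon/2)(1+o(1))}$. Here we use $\mathbf{P}(\hat\tau_0\le k)\to1-\gamma$ and $\lambda=-1/\log(1-\gamma)$, and truncate the return times. Independence across blocks gives a failure probability of order $\exp(-n^{c\varepsilon})$, which is summable.

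For the upper bound we use a union bound over the at most $n+1$ visited sites. By the strong Markov property at the first hitting time, $\mathbf{P}(\xi(x,\infty)\ge u)\le(1-\gamma)^{u-1}$, so the total is $\le n(1-\gamma)^{(2+\varepsilon/2)\lambda\log n-1}\le Cn^{-1-\varepsilon/2}$. Along $n_\ell=2^\ell$, with monotonicity in $n$ (comparing $\xi^*(n)$ with $\xi^*(n_{\ell+1})$ and $\log n$ with $\log n_{\ell+1}$, and absorbing the slack into $\varepsilon$), this yields a tail bound $\mathbf{P}(\bigcup_{n\ge N}D_n^c)\le CN^{-\varepsilon/2}$.

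\textbf{The events $E_n$.} We bound $\mathbf{E}[\bar F_n]$. For each pair $i<j\le i+(\lambda\log n)^2$ with $S_i\ne S_j$ and $S_i,S_j\notin S_{(i,j)}$, the indicator requires both $S_i$ and $S_j$ to be thick. The condition $S_i,S_j\notin S_{(i,j)}$ makes $i$ the last visit to $S_i$ before $j$; this lets us avoid double counting, and in practice we just drop it. Conditioning on $\mathcal F_i$ and using the strong Markov property at $i$, the future local times at $S_i$ and $S_j$ after time $i$, together with the past local time, must sum to at least $2(1-\varepsilon)\lambda\log n$.

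The contribution from before time $i$ is the delicate part. It is handled by time reversal, since increments reversed from time $i$ form the walk with $-X$, which has the same return probability $\gamma$. Equivalently, we bound the combined local time at $\{S_i,S_j\}$ over the whole path $[0,n]$, viewed from time $i$ both forwards and backwards. Each direction contributes a two-point occupation count, and Lemma~\ref{pro:2.3} (for $X$ and for $-X$, with the same $\gamma$) gives $\mathbf{P}(\cdot\ge u)\le C(1-\gamma/a)^u$. Splitting $u$ between past and future costs a factor $O(\log n)$. We obtain
\[
\mathbf{E}[\bar F_n]\le C n(\lambda\log n)^3\left(1-\tfrac{\gamma}{a}\right)^{2(1-\varepsilon)\lambda\log n} = C(\log n)^3 n^{1-(1+\delta)(1-\varepsilon)},
\]
using \eqref{lem-2.4-0}. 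The hypothesis on $\varepsilon$ should ensure this exponent is negative, $\le-\delta/2$ up to the precise bookkeeping; I expect a squared factor $(1-\varepsilon)^2$ to appear once the thickness threshold at time $n$ is compared with that at $n_{\ell+1}$.

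\textbf{Interpolation and main obstacle.} Since $\bar F_n$ is not monotone in $n$, we use the subsequence $n_\ell=\lfloor \ell^{\iota}\rfloor$. For $n\in[n_\ell,n_{\ell+1}]$, a bad pair at time $n$ is a pair of sites that are thick at time $n_{\ell+1}$, with threshold $(1-\varepsilon)\lambda\log n\ge(1-\varepsilon)^2\lambda\log n_{\ell+1}$, occurring within the window $(\lambda\log n_{\ell+1})^2$. A modified $\bar F_{n_{\ell+1}}$ with threshold $(1-\varepsilon)^2$ has expectation $\le C n_\ell^{-\delta/2+o(1)}\approx \ell^{-\iota\delta/2}$. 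This is summable because $\iota\delta/2>1+\delta>1$, and the tail sum from $\ell\approx N^{1/\iota}$ gives $N^{-\delta/\iota}$. Combining the two parts yields the stated bound $CN^{-[(\varepsilon/2)\wedge(\delta/\iota)]}$, and letting $N\to\infty$ gives $\mathbf{P}(\liminf_n(D_n\cap E_n))=1$. The main obstacle is the two-point past/future estimate for $\mathbf{E}[\bar F_n]$: Lemma~\ref{pro:2.3} controls only future occupation from a fixed site, so the time reversal and the split of $u$ must be made rigorous.
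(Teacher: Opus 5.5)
Your overall architecture is the paper's. For $D_n^c$ you use a union bound; your $2^\ell$ interpolation is unnecessary, since $\mathbf{P}(D_n^c)\le Cn^{-1-\varepsilon/2}$ is already summable with tail $CN^{-\varepsilon/2}$. For $E_n$ you dominate $\bar F_k$, $k\in[n^\iota,(n+1)^\iota)$, by a single sum at time $(n+1)^\iota$ with threshold $(1-\varepsilon)\lambda\log n^\iota$, then apply a per-pair union bound with the same exponent bookkeeping.

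The gap is in the per-pair estimate, which is the heart of the lemma and which you leave open. The route you propose (drop $S_i,S_j\notin S_{(i,j)}$, condition on $\mathcal F_i$, count forward from time $i$) does not work as stated. Dropping the condition is legitimate for an upper bound, but it discards exactly the structure that makes the estimate possible:
\begin{itemize}
\item After conditioning on $\mathcal F_i$, the second site is $S_j=S_i+(S_j-S_i)$, and $S_j-S_i$ is a function of the very increments that drive the forward walk. So your forward count is a two-point occupation count at a random, path-dependent point. Lemma~\ref{pro:2.3} bounds such counts only for a deterministic $z\neq0$, so it does not apply.
\item The backward count at $\{S_i,S_j\}$ also depends on $S_j-S_i$. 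Hence the past and future counts are not independent, and the split of $u$ does not factorize.
\item Without the condition you also cannot discard the visits to $\{S_i,S_j\}$ during $(i,j)$. There may be up to $j-i\approx(\lambda\log n)^2\gg\log n$ of them, and they are correlated with $S_j-S_i$.
\end{itemize}

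The missing idea is to keep that condition and split the path at both $i$ and $j$. Set $z:=S_j-S_i$. On $\{S_i,S_j\notin S_{(i,j)}\}$, the sum $\xi(S_i,n)+\xi(S_j,n)$ equals the number of visits of $(S_{i-l}-S_i)_{0\le l\le i}$ to $\{0,z\}$ plus the number of visits of $(S_{j+l}-S_j)_{0\le l\le n-j}$ to $\{0,-z\}$. These two counts and $z$ are functions of the disjoint blocks $(X_1,\dots,X_i)$, $(X_{i+1},\dots,X_j)$ and $(X_{j+1},X_{j+2},\dots)$, so they are independent. You may therefore condition on $S_j-S_i=z$ and treat $z$ as deterministic.

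The backward count has the law of $\xi(0,i)+\xi(-z,i)$, and the forward count has the law of $\xi(0,n-j)+\xi(-z,n-j)$. So Lemma~\ref{pro:2.3} for $S$ itself bounds both factors through its supremum over $z\neq0$; no separate statement for $-X$ is needed. Your integer splitting of $u$ costs a factor $O(\log n)$ with no loss in the exponent. It gives $C\log n\cdot n^{-\iota(1-\varepsilon)(1+\delta)}$ per pair, and the rest of your argument then goes through.

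The paper instead splits on an $\varepsilon$-grid, see \eqref{eq:4.10}. That grid is where its factor $(1-\varepsilon)^2$ comes from, not a comparison of thresholds at $n$ and $n_{\ell+1}$.
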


\begin{rmk}\label{rem:2.5}
On $D_n\cap E_n$, if $\xi^*(n)=m$, then
$$
\left(1-\frac{\varepsilon}{2}\right)\lambda\log n
\leq
m
\leq
\left(2+\frac{\varepsilon}{2}\right)\lambda\log n .
$$
Therefore, for all sufficiently large $n$,
$
(1-\varepsilon)\lambda\log n\leq m, ~\frac{m^2}{16}<(\lambda\log n)^2 .
$
We claim that, on $D_n\cap E_n\cap\{\xi^*(n)=m\}$, for every $k\geq2$ satisfying $T_m^k\leq n$,
\begin{equation*}
T_m^k-T_m^{k-1}\geq\frac{m^2}{16}.
\end{equation*}
Indeed, if $T_m^k-T_m^{k-1}< m^2/16$, then the path must pass from $L_m^{k-1}$ to $L_m^k$ during the interval $[T_m^{k-1},T_m^k]$. Hence there exist $i<j$ in this interval such that
$$
j-i<\frac{m^2}{16}<(\lambda\log n)^2,~~S_i\ne S_j,~~S_i,S_j\notin S_{(i,j)}.
$$
Moreover,
$\xi(S_i,n)\geq m,~~\xi(S_j,n)\geq m .$
Thus this pair is counted by $\bar{F}_n$, contradicting $E_n=\{\bar{F}_n=0\}$.
\end{rmk}

\medskip

\begin{lem}\label{lem:2.6}
Almost surely, there exists $M=M(\omega)<\infty$ such that for every
$m>M$,
$$
\mathbf{1}_{C_m^k}(\omega)=\mathbf{1}_{B_m^k}(\omega),~~k\geq2,
~~\text{and}~~
T_m^1>\exp\left(\frac{m}{4\lambda}\right).
$$
\end{lem}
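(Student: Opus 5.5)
The plan is to combine Lemma~\ref{lem:2.1} with Lemma~\ref{lem:2.4} and Remark~\ref{rem:2.5}, using the identity \eqref{eq:2.4}, $C_m^k=B_m^k\cap\widetilde{B}_m^k$. We will show that for all large $m$ the event $\widetilde{B}_m^k$ either holds automatically or is irrelevant. The two claims are proved in order: first the growth of $T_m^1$, then the identity $\mathbf 1_{C_m^k}=\mathbf 1_{B_m^k}$.

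\emph{Step 1: the bound on $T_m^1$.} By definition, $\xi^*(T_m^1)=m$ for every $m\ge2$. By Lemma~\ref{lem:2.1}, almost surely $\xi^*(n)\le 2\lambda\log n$ for all large $n$. Since $T_m^1\to\infty$ as $m\to\infty$ (local times are finite at finite times), for large $m$ this gives
\[
m=\xi^*(T_m^1)\le 2\lambda\log T_m^1 .
\]
Hence $T_m^1\ge \exp(m/(2\lambda))>\exp(m/(4\lambda))$. Lemma~\ref{lem:2.4} would also suffice here, through the upper bound in $D_n$ with the constant $2+\varepsilon/2<4$.

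\emph{Step 2: $\widetilde{B}_m^k$ is implied by $B_m^k$ for large $m$.} Fix $\omega$ in the full-measure event $\liminf_n(D_n\cap E_n)$ from Lemma~\ref{lem:2.4}, and let $n_0(\omega)$ be such that $D_n\cap E_n$ holds for all $n\ge n_0$. Take $m$ large enough that $T_m^1>n_0$, which is possible by Step 1. Since $C_m^k\subseteq B_m^k$ always, it suffices to show that on $B_m^k$ we have $T_m^k<T_{m+1}^1$. The point is that $\widetilde{A}_m^j$ only restricts the window $[T_m^{j-1},T_m^{j-1}+m^2/16]$, and we will argue that this window is harmless.

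Suppose $T_m^{j}<T_{m+1}^1$ holds for $j<k$, and consider the time $n=T_m^{j}$ (or the relevant time up to $T_m^k\wedge T_{m+1}^1$). At that time $\xi^*(n)=m$ and $n\ge T_m^1>n_0$, so $D_n\cap E_n$ holds. Remark~\ref{rem:2.5} then gives $T_m^{j}-T_m^{j-1}\ge m^2/16$. We also need that during $[T_m^{j-1},T_m^{j-1}+m^2/16]$ the walk does not revisit any earlier thick site $L_m^i$ with $i\le j-2$. Such a revisit would create a pair of distinct sites, namely $L_m^{j-1}$ and $L_m^i$, each with local time at least $m\ge(1-\varepsilon)\lambda\log n$, visited within fewer than $(\lambda\log n)^2$ steps. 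Taking the last visit to one site and the first subsequent visit to the other gives times $i'<j'$ with $S_{i'},S_{j'}\notin S_{(i',j')}$, and this pair is counted by $\bar F_n$, contradicting $E_n$.

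\emph{Expected obstacle.} The delicate point is the choice of the time $n$ at which to invoke $E_n$. We need an $n\ge$ the end of the window for which both sites already have local time at least $(1-\varepsilon)\lambda\log n$, and for which $\log n$ is comparable to $m$ so that $m^2/16<(\lambda\log n)^2$. I would take $n=T_m^k\wedge T_{m+1}^1$ or $n=T_{m+1}^1-1$. Then $\xi^*(n)=m$, and $D_n$ gives $m\ge(1-\varepsilon/2)\lambda\log n$ as well as $m\le(2+\varepsilon/2)\lambda\log n$. The remaining case, where the window itself extends beyond $T_{m+1}^1$, is handled by truncation, exactly as in the definition of $\widetilde A_m^k$. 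With this choice $\widetilde B_m^k$ holds for all large $m$ almost surely, and \eqref{eq:2.4} yields $\mathbf 1_{C_m^k}=\mathbf 1_{B_m^k}$.
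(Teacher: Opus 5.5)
Your proposal is correct and follows essentially the same route as the paper. You bound $T_m^1$ from below via the $\xi^*$ bounds (Lemma~\ref{lem:2.1} or $D_n$). Then you argue inductively through $C_m^{j}=C_m^{j-1}\cap A_m^{j}\cap\widetilde A_m^{j}$: on $C_m^{j-1}$, a revisit to some $L_m^i$, $i\le j-2$, within $m^2/16$ steps of $T_m^{j-1}$ yields a pair counted by $\bar F_n$, contradicting $E_n$.

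The only real difference is the time at which $E_n$ is invoked. The paper uses $T_m^{j-1}+\lfloor m^2/16\rfloor$, bounded above through $\xi^*(T_{m+1}^1)=m+1$. Your choice $n=T_m^k\wedge T_{m+1}^1$ works, with three small corrections:
\begin{itemize}
\item $\xi^*(n)$ may equal $m+1$ there rather than $m$, but this still gives $m\geq(1-\varepsilon)\lambda\log n$ for large $m$.
\item The alternative $n=T_{m+1}^1-1$ would miss a revisit occurring exactly at time $T_{m+1}^1$, so drop it.
\item The appeal to Remark~\ref{rem:2.5} is unnecessary, since the window is already truncated at $T_m^j$.
\end{itemize}
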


\begin{lem}\label{lem:2.7}
There exists a constant $C=C(\mu,\sigma)<\infty$ such that for all sufficiently large $m$,
\begin{align}\label{lem:2.7-a}
 \mathbf{P}\left(S_n\leq0\ \mbox{for some}\  n\geq \left\lfloor m^2/16\right\rfloor \right)\leq Cm^{-2}.
\end{align}
\end{lem}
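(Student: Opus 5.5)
The plan is to bound the probability by a sum over times, then control each term by a second-moment inequality for the walk. Write $N_m:=\lfloor m^2/16\rfloor$. Since $S_n\to+\infty$ a.s. with positive drift $\mu$, the event $\{S_n\le 0 \text{ for some } n\ge N_m\}$ is contained in
$$
\bigcup_{n\ge N_m}\{S_n-\mu n\le -\mu n\}.
$$
A direct union bound with Chebyshev gives terms $\sigma^2/(\mu^2 n)$, and their sum diverges. So I will not sum individual times. Instead I will group the times dyadically and use Kolmogorov's maximal inequality (equivalently Doob's $L^2$ inequality) on each block.

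For $j\ge0$ let $I_j:=[2^jN_m,2^{j+1}N_m)$. On $I_j$ we have $\mu n\ge \mu 2^jN_m$, so
$$
\mathbf P\Bigl(\exists n\in I_j: S_n\le 0\Bigr)\le \mathbf P\Bigl(\max_{n\le 2^{j+1}N_m}|S_n-\mu n|\ge \mu 2^jN_m\Bigr).
$$
The centered walk $S_n-\mu n$ is a martingale with increment variance $\mathrm{Var}(X_1)\le\sigma^2$. Kolmogorov's inequality therefore bounds the right-hand side by
$$
\frac{2^{j+1}N_m\sigma^2}{\mu^2 4^j N_m^2}=\frac{2\sigma^2}{\mu^2}\,\frac{2^{-j}}{N_m}.
$$
Summing over $j\ge0$ gives a bound of $4\sigma^2/(\mu^2N_m)$.

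For all sufficiently large $m$ we have $N_m\ge m^2/32$. This yields the claim with $C=128\sigma^2/\mu^2$, which depends only on $\mu$ and $\sigma$ as required. The case $N_m=0$ for small $m$ is excluded by the phrase "sufficiently large $m$", and that is the only reason the restriction is needed.

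There is no real obstacle in this argument. The one point requiring care is avoiding the naive union bound over single times, which diverges; the dyadic blocking with the maximal inequality is what makes the sum converge. Only $\mathbf E[X_1^2]<\infty$ is used; no condition of the form \eqref{eq:1.1} is needed.
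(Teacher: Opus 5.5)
Your proposal is correct and is essentially the paper's own proof. The paper also splits $[\lfloor m^2/16\rfloor,\infty)$ into dyadic blocks $[2^rM_1,2^{r+1}M_1)$ and applies Kolmogorov's maximal inequality to the martingale $S_n-n\mu$ on each block, obtaining terms of order $2^{-r}\sigma^2/(\mu^2M_1)$, then sums the geometric series; your explicit constant $C=128\sigma^2/\mu^2$, valid once $\lfloor m^2/16\rfloor\ge m^2/32$ (i.e.\ $m\ge 6$), is just a concrete version of the paper's unspecified $C$.
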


\begin{proof}[\bf Proof]
Set $M_1:=\lfloor m^2/16\rfloor.$ The process $(S_n-n\mu)_{n\geq0}$ is a square integrable martingale. Moreover, for every $N\geq1$,
$$
\begin{aligned}
\mathbf{E}\left[(S_N-N\mu)^2\right]
&=
\mathbf{E}\left[\left(\sum_{i=1}^N (X_i-\mu)\right)^2\right]
=
\sum_{i=1}^N\mathbf{E}\left[(X_i-\mu)^2\right]\\
&=
N\mathbf{E}\left[(X_1-\mu)^2\right]\leq N\sigma^2 .
\end{aligned}
$$
Kolmogorov's maximal inequality therefore yields, for any $N\geq1$ and $b>0$,
$$
\mathbf{P}\left(\max_{1\leq t\leq N}|S_t-t\mu|\geq b\right)
\leq \frac{\mathbf{E}\left[(S_N-N\mu)^2\right]}{b^2}
\leq \frac{N\sigma^2}{b^2}.
$$
For $r\geq0$, if $S_n\leq0$ for some $n\in[2^rM_1,2^{r+1}M_1)$, then $|S_n-n\mu|\geq n\mu\geq 2^rM_1\mu .$
Therefore,
$$
\begin{aligned}
\mathbf{P}\left(S_n\leq0\ \mbox{for some}\ n\geq M_1\right)
&\leq
\sum_{r=0}^{\infty}\mathbf{P}\left(S_n\leq0\ \mbox{for some}\ n\in[2^rM_1,2^{r+1}M_1)\right)\\
&\leq
\sum_{r=0}^{\infty} \mathbf{P} \left(\max_{1\leq t\leq 2^{r+1}M_1}|S_t-t\mu|\geq2^rM_1\mu \right)\\
&\leq
\sum_{r=0}^{\infty}\frac{2^{r+1}M_1\sigma^2}{(2^rM_1\mu)^2}
=
\frac{\sigma^2}{\mu^2M_1}\sum_{r=0}^{\infty}2^{1-r}
\leq
\frac{C}{M_1}.
\end{aligned}
$$
Substituting $M_1$  by $\lfloor m^2/16\rfloor$, we get (\ref{lem:2.7-a}). The proof is complete.
\end{proof}

\begin{proof}[\noindent{\bf Proof of Theorem~\ref{thm:1.3}}]
For $m\geq1$, set
$$
\mathcal{G}_m:=\sup\{k:T_m^k<T_{m+1}^1\}.
$$
By Lemma~\ref{lem:2.1}, it suffices to prove
\begin{equation}\label{eq:2.17}
 \limsup_{m\to\infty}\frac{\mathcal{G}_m}{\log m}
=-\frac{1}{\log\gamma}=:\theta ~~\text{a.s.}.
\end{equation}
Fix $\varepsilon \in(0,1)$ satisfying the assumption of Lemma~\ref{lem:2.4}. For
$m\ge1$, set
$$
K_m:=\bigcap_{n\ge \exp(m/(4\lambda))}(D_n\cap E_n).
$$

We first prove the upper bound in \eqref{eq:2.17}. Put
$I_m:=\lfloor(1+\varepsilon)\theta\log m\rfloor.$
On $K_m\cap\{\mathcal{G}_m>I_m\}$, the event $C_m^{I_m+1}$ occurs. Lemma~\ref{lem:2.6} implies that, almost surely, for all sufficiently large $m$, and every $1\leq j\leq I_m$,
$$
\exp\left(\frac{m}{4\lambda}\right) <T_m^1 \leq T_m^{j+1}<T_{m+1}^1,
~~
\xi^*(T_m^{j+1})=m.
$$
Hence, by the definition of $K_m$ and Remark~\ref{rem:2.5}, for every $1\leq j\leq I_m$,
$
T_m^{j+1}-T_m^j\geq\frac{m^2}{16}.
$
Moreover, since ${L_m^j}$ has local time $m$ at time $T_m^j$, it
cannot be visited again before $T_{m+1}^1$. Using
$T_m^j+\left\lfloor m^2/16 \right\rfloor \leq T_m^{j+1}<T_{m+1}^1,$
we have
$
S_{T_m^j}\notin
\left\{S_{T_m^j+1},\ldots,S_{T_m^j+\left\lfloor m^2/16\right\rfloor}\right\}.
$
For $1\leq j\leq I_m$, define
$$
R_m^j
:=
\left\{T_m^{j+1}<\infty,~T_m^{j+1}-T_m^j\geq\left\lfloor m^2/16 \right\rfloor,~
S_{T_m^j}\notin\left\{S_{T_m^j+1},\ldots,S_{T_m^j+\left\lfloor m^2/16\right\rfloor}\right\}
\right\}.
$$
Consequently,
\begin{equation}\label{eq:limsup}
\limsup_{m\to\infty}\left(K_m\cap\{\mathcal{G}_m>I_m\}\right)
\subseteq
\limsup_{m\to\infty}\cap_{j=1}^{I_m}R_m^j
~~
\text{a.s.}.
\end{equation}
Since $\bigcap_{i=1}^{j-1}R_m^i\in\mathcal{F}_m^j$, applying the strong Markov property at $T_m^j$, successive conditioning, and Lemma \ref{lem:2.7}, we get
that  for all sufficiently large $m$,
$$
\begin{aligned}
\mathbf{P}\left(\cap_{j=1}^{I_m}R_m^j\right)
&=
\mathbf{E}\left[\mathbf{1}_{\bigcap_{j=1}^{I_m-1}R_m^j}
\mathbf{P}\left(R_m^{I_m}\mid\mathcal{F}_m^{I_m}\right)\right]\\
&\leq
\mathbf{P}\left(S_0\notin
\left\{S_1,\ldots,S_{\left\lfloor m^2/16\right\rfloor}\right\}\right)
\mathbf{P}\left(\cap_{j=1}^{I_m-1}R_m^j\right)
\leq
\mathbf{P}\left(S_0\notin
\left\{S_1,\ldots,S_{\left\lfloor m^2/16\right\rfloor}\right\}\right)^{I_m}\\
&\leq
\left[\mathbf{P}\left(S_n\neq0,\ n\geq1\right)
+
\mathbf{P}\left(S_n=0\text{ for some }n\geq \lfloor m^2/16\rfloor\right)\right]^{I_m}\\
&\leq
\left(\gamma+Cm^{-2}\right)^{I_m}
\leq
Cm^{-(1+\varepsilon)}.
\end{aligned}
$$
Thus, by the Borel--Cantelli lemma,
$\mathbf{P}\left(\bigcap_{j=1}^{I_m}R_m^j~\text{i.o.}\right)=0.$

By Lemma~\ref{lem:2.4}, the definition of $K_m$, and the Borel-Cantelli lemma,
$\mathbf{P}\left(K_m^{\mathrm c}\text{ i.o.}\right)=0.$
 By \eqref{eq:limsup}, we have
$$
\left\{\mathcal{G}_m>I_m\text{~ i.o.}\right\}
\subseteq
\left\{K_m^{\mathrm c}\text{~ i.o.}\right\}
\cup
\left\{\cap_{j=1}^{I_m}R_m^j\text{~i.o.}\right\}~~\text{a.s.}.
$$
Consequently,
$
\mathbf{P}\left(\mathcal{G}_m>I_m~\text{i.o.}\right)=0.
$
 Therefore, almost surely,
\begin{equation}\label{eq:2.21}
\limsup_{m\to\infty}\frac{\mathcal{G}_m}{\log m}
\leq
\lim_{m\to\infty}\frac{I_m}{\log m}
=
(1+\varepsilon)\theta\to\theta,\ \text{as}\ \varepsilon\to 0.
\end{equation}

We next prove the lower bound in \eqref{eq:2.17}.
In the remainder of  this proof,
we always work with $m$  sufficiently large
so that $\gamma>cm^{-2}$.
Put $J_m:=\lfloor(1-\varepsilon)\theta\log m\rfloor$. Fix $3\leq k\leq J_m$. By  the definitions of $C_m^{k-1}$ and $A_m^k$, we have
$$
C_m^{k-1}\cap A_m^k\supseteq
C_m^{k-1}\cap\left\{S_{T_m^{k-1}+n}\neq L_m^{k-1},\ \forall n\geq1\right\}
\cap
\left\{S_{T_m^{k-1}+n}\notin\{L_m^1,\ldots,L_m^{k-2}\},\ \forall n\geq\left\lfloor m^2/16 \right\rfloor\right\}.
$$
Then by $C_m^{k-1}\cap\bigcap_{j=1}^{k-2}\{L_m^j<L_m^{k-1}\}\in \mathcal{F}_m^{k-1}$, the strong Markov property at $T_m^{k-1}$, spatial homogeneity, and Lemma~\ref{lem:2.7}, we get
\begin{align*}
&1_{C_m^{k-1}\cap\bigcap_{j=1}^{k-2}\{L_m^j<L_m^{k-1}\}}\mathbf{P}\left(\left. A_m^k\right|\mathcal{F}_m^{k-1}\right)
=\mathbf{P}\left(\left. C_m^{k-1}\cap\cap_{j=1}^{k-2}\{L_m^j<L_m^{k-1}\}\cap A_m^k\right| \mathcal{F}_m^{k-1}\right)\\
&\geq \mathbf{P}\left(C_m^{k-1}\cap\cap_{j=1}^{k-2}\{L_m^j<L_m^{k-1}\}\right.\\
&\quad\quad\quad \left.\left.\cap \left\{S_{T_m^{k-1}+n}\neq L_m^{k-1},\ \forall n\geq1\right\}
\cap \left\{S_{T_m^{k-1}+n}\notin\{L_m^1,\ldots,L_m^{k-2}\},\ \forall n\geq\left\lfloor m^2/16 \right\rfloor\right\}\right| \mathcal{F}_m^{k-1}\right)\\
&\geq \mathbf{P}\left(C_m^{k-1}\cap\cap_{j=1}^{k-2}\{L_m^j<L_m^{k-1}\}\right.\\
&\quad\quad\quad \left.\cap
\left\{S_{T_m^{k-1}+n}\neq L_m^{k-1},\, \forall n\geq1\right\}
\cap
\left\{ S_{T_m^{k-1}+n}-L_m^{k-1}\geq0,\ \forall n\geq \left\lfloor m^2/16 \right\rfloor \right\} \, \middle| \, \mathcal{F}_m^{k-1}\right)\\
&=1_{C_m^{k-1}\cap\cap_{j=1}^{k-2}\{L_m^j<L_m^{k-1}\}}\mathbf{P}
\left(\left\{S_{T_m^{k-1}+n}\neq L_m^{k-1},\, \forall n\geq1\right\}
\cap
\left\{ S_{T_m^{k-1}+n}-L_m^{k-1}\geq0,\ \forall n\geq \left\lfloor m^2/16 \right\rfloor \right\} \, \middle| \, \mathcal{F}_m^{k-1}\right)\\
&=1_{C_m^{k-1}\cap\cap_{j=1}^{k-2}\{L_m^j<L_m^{k-1}\}}
\mathbf{P}\left(\{S_n\neq0,\ \forall n\geq1\}
\cap
\left\{S_n\geq0,\ \forall n\geq\left\lfloor m^2/16 \right\rfloor\right\}\right)\\
&\geq1_{C_m^{k-1}\cap\cap_{j=1}^{k-2}\{L_m^j<L_m^{k-1}\}}\left[
\mathbf{P}\left(S_n\neq0,\ \forall n\geq1\right)
-
\mathbf{P}\left(S_n<0\text{ for some }n\geq\left\lfloor m^2/16 \right\rfloor\right)\right]\\
&\geq\left(\gamma-Cm^{-2}\right)1_{C_m^{k-1}\cap\cap_{j=1}^{k-2}\{L_m^j<L_m^{k-1}\}}\quad \text{a.s.},
\end{align*}
which together with the fact that $0<\gamma-Cm^{-2}<\gamma<1$ and $L_m^1,\ldots,L_m^{k-1}$ are distinct on $C_m^{k-1}$, implies that
\begin{align}\label{eq:2.22}
\mathbf{1}_{B_m^{k-1}}\mathbf{P}\left(A_m^k\mid\mathcal{F}_m^{k-1}\right)
&\geq \mathbf{1}_{B_m^{k-1}}\cdot 1_{C_m^{k-1}\cap\cap_{j=1}^{k-2}\{L_m^j<L_m^{k-1}\}}\mathbf{P}\left(A_m^k\mid\mathcal{F}_m^{k-1}\right)
\nonumber\\
&\geq
(\gamma-Cm^{-2})
\mathbf{1}_{B_m^{k-1}\cap C_m^{k-1}\cap\cap_{j=1}^{k-2}\{L_m^j<L_m^{k-1}\}}\nonumber\\
&\geq
(\gamma-Cm^{-2})\mathbf{1}_{B_m^{k-1}}
-
\mathbf{1}_{B_m^{k-1}\cap(C_m^{k-1})^{\mathrm c}}-
\mathbf{1}_{C_m^{k-1}\cap\cup_{j=1}^{k-2}\{L_m^{k-1}<L_m^j\}}\quad\text{a.s.}.
\end{align}
For $k=2$, since $T_m^1<\infty$ a.s. and $\mathbf{P}(B_m^1)=1$, the definition of $A_m^2$ and the strong Markov property at $T_m^1$ give
\begin{align}\label{eq:2.22-A}
\mathbf{P}\left(B_m^2\mid\mathcal{F}_m^1\right)
=
\mathbf{P}\left(A_m^2\mid\mathcal{F}_m^1\right)
\geq
\mathbf{P}\left(S_{T_m^1+n}\neq L_m^1,\ \forall n\geq1\,\middle|\,\mathcal{F}_m^1\right)
=\gamma.
\end{align}

By $B_m^k=B_m^{k-1}\cap A_m^k$,  $B_m^{k-1}\in\mathcal{F}_m^{k-1}$, \eqref{eq:2.22}, \eqref{eq:2.22-A},  and  the definitions of $J_m$ and $\theta$, we get
\begin{align}\label{eq:2.22-B}
&\mathbf{P}\left(B_m^{J_m}\mid\mathcal{F}_m^1\right)
=
\mathbf{E}\left[
\mathbf{1}_{B_m^{J_m-1}}\mathbf{P}\left(A_m^{J_m}\mid\mathcal{F}_m^{J_m-1}\right)
\,\middle|\,\mathcal{F}_m^1\right]\nonumber\\
&\geq
\mathbf{E}\left[(\gamma-Cm^{-2})\mathbf{1}_{B_m^{J_m-1}}
-
\mathbf{1}_{B_m^{J_m-1}\cap(C_m^{J_m-1})^{\mathrm c}}
-
\mathbf{1}_{C_m^{J_m-1}\cap\bigcup_{j=1}^{J_m-2}\{L_m^{J_m-1}<L_m^j\}} \,\middle|\,\mathcal{F}_m^1\right]\nonumber\\
&=
(\gamma-Cm^{-2})\mathbf{P}\left(B_m^{J_m-1}\mid\mathcal{F}_m^1\right)-
\mathbf{E}\left[\mathbf{1}_{B_m^{J_m-1}\cap(C_m^{J_m-1})^{\mathrm c}}
+
\mathbf{1}_{C_m^{J_m-1}\cap\bigcup_{j=1}^{J_m-2}\{L_m^{J_m-1}<L_m^j\}}
\,\middle|\,\mathcal{F}_m^1\right]\nonumber\\
&\geq
\gamma(\gamma-Cm^{-2})^{J_m-2}-
\sum_{k=3}^{J_m}(\gamma-Cm^{-2})^{J_m-k}
\mathbf{E}\left[\mathbf{1}_{B_m^{k-1}\cap(C_m^{k-1})^{\mathrm c}}
+
\mathbf{1}_{C_m^{k-1}\cap\bigcup_{j=1}^{k-2}\{L_m^{k-1}<L_m^j\}}
\,\middle|\,\mathcal{F}_m^1\right]\nonumber\\
&\geq
C^{-1}m^{-(1-\varepsilon)}-
J_m\mathbf{P}\left(\cup_{k=3}^{J_m}
\left[\left(B_m^{k-1}\cap(C_m^{k-1})^{\mathrm c}\right)
\cup
\left(C_m^{k-1}\cap\cup_{j=1}^{k-2}\{L_m^{k-1}<L_m^j\}\right)\right]
\,\middle|\,\mathcal{F}_m^1\right)\quad \text{a.s.}.
\end{align}
By Remark~\ref{rem:2.5} and the definition of $L_m^j$,
for any $1\leq j\leq k-2$, we have 
$$
K_m\cap
\left\{T_m^1>\exp\left(\frac{m}{4\lambda}\right)\right\}\cap C_m^{k-1}\cap\{L_m^{k-1}<L_m^j\}
\subseteq
\left\{T_m^j<\infty,\ S_{T_m^j+n}-S_{T_m^j}<0
\text{ for some }
n\geq\left\lfloor m^2/16\right\rfloor\right\}.
$$
Taking the union over $k$ and $j$, we get
\begin{equation}\label{eq:2.22-a}
\mathbf{1}_{K_m
\cap
\left\{T_m^1>\exp\left(\frac{m}{4\lambda}\right)\right\}
\cap\bigcup_{k=3}^{J_m}
\left[C_m^{k-1}\cap\bigcup_{j=1}^{k-2}\{L_m^{k-1}<L_m^j\}\right]}
\leq
\sum_{j=1}^{J_m-2}\mathbf{1}_{\left\{T_m^j<\infty,\ S_{T_m^j+n}-S_{T_m^j}<0
\text{ for some }
n\geq\left\lfloor m^2/16\right\rfloor\right\}}.
\end{equation}
By the strong Markov property at $T_m^j$, spatial homogeneity, Lemma~\ref{lem:2.7}, and $J_m=O(\log m)$, we have
\begin{equation}\label{eq:2.22-b}
\begin{aligned}
&\sum_{m=1}^{\infty}\sum_{j=1}^{J_m-2}\mathbf{P}\left(T_m^j<\infty,\
S_{T_m^j+n}-S_{T_m^j}<0\text{ for some }n\geq\left\lfloor m^2/16\right\rfloor\right)\\
=&
\sum_{m=1}^{\infty}\sum_{j=1}^{J_m-2}\mathbf{P}(T_m^j<\infty)\mathbf{P}\left(S_n<0
\text{ for some }n\geq\left\lfloor m^2/16\right\rfloor\right)
\leq
C\sum_{m=1}^{\infty}\frac{J_m}{m^2}
<\infty.
\end{aligned}
\end{equation}
Using Lemmas~\ref{lem:2.4} and~\ref{lem:2.6},
\eqref{eq:2.22-a}--\eqref{eq:2.22-b}, and the first Borel--Cantelli lemma, we obtain, a.s.,
\begin{equation*}
\begin{aligned}
&\sum_{m=1}^{\infty}\mathbf{1}_{\bigcup_{k=3}^{J_m}
\left[\left(B_m^{k-1}\cap(C_m^{k-1})^{\mathrm c}\right)
\cup
\left(C_m^{k-1}\cap\bigcup_{j=1}^{k-2}\{L_m^{k-1}<L_m^j\}\right)\right]}\\
\leq&
\sum_{m=1}^{\infty}\left[\mathbf{1}_{\bigcup_{k=3}^{J_m}\left(B_m^{k-1}\cap(C_m^{k-1})^{\mathrm c}\right)}
+
\mathbf{1}_{\bigcup_{k=3}^{J_m}
\left[C_m^{k-1}\cap\bigcup_{j=1}^{k-2}\{L_m^{k-1}<L_m^j\}\right]}\right]\\
\leq&
\sum_{m=1}^{\infty}
\left[\mathbf{1}_{\bigcup_{k=3}^{J_m}
\left(B_m^{k-1}\cap(C_m^{k-1})^{\mathrm c}\right)}+\mathbf{1}_{K_m^{\mathrm c}}
+\mathbf{1}_{\left\{T_m^1\leq\exp\left(\frac{m}{4\lambda}\right)\right\}}
+\sum_{j=1}^{J_m-2}\mathbf{1}_{\left\{T_m^j<\infty,\ S_{T_m^j+n}-S_{T_m^j}<0
\text{ for some }n\geq\left\lfloor\frac{m^2}{16}\right\rfloor\right\}}
\right]<\infty.
\end{aligned}
\end{equation*}
Since both unions are $\mathcal{F}_{m+1}^1$-measurable, the generalized second Borel-Cantelli lemma (\cite[Theorem~4.3.4]{D2019}) give
\begin{equation}\label{eq:2.23}
\sum_{m=1}^{\infty} \mathbf{P}\left(\cup_{k=3}^{J_m}\left[\left(B_m^{k-1}\cap(C_m^{k-1})^{\mathrm c}\right)
\cup
\left(C_m^{k-1}\cap\cup_{j=1}^{k-2}\{L_m^{k-1}<L_m^j\}\right)
\right]\,\middle|\,\mathcal{F}_m^1\right)<\infty,~~\text{a.s.}.
\end{equation}
By the definition of $J_m$, \eqref{eq:2.22-B} and \eqref{eq:2.23}, we obtain
\begin{equation*}
\begin{aligned}
&\sum_{m\leq n}
\mathbf{P}\left(B_m^{J_m}\mid\mathcal{F}_m^1\right)\\
\geq&
C^{-1}\sum_{m\leq n}m^{-(1-\varepsilon)}-J_n\sum_{m=1}^{\infty}
\mathbf{P}\left(\cup_{k=3}^{J_m}
\left[\left(B_m^{k-1}\cap(C_m^{k-1})^{\mathrm c}\right)
\cup
\left(C_m^{k-1}\cap
\cup_{j=1}^{k-2}\{L_m^{k-1}<L_m^j\}\right)\right]\,\middle|\,\mathcal{F}_m^1\right)\\
\geq&
C^{-1}n^\varepsilon-O(\log n)\to\infty,~~\text{a.s.}
\end{aligned}
\end{equation*}
By $B_m^{J_m}\in\mathcal{F}_{m+1}^1$, the generalized second Borel-Cantelli lemma (\cite[Theorem~4.3.4]{D2019}) and Lemma~\ref{lem:2.6}, we get
$\mathbf{P}\left(B_m^{J_m}\text{ i.o.}\right)
=
\mathbf{P}\left(C_m^{J_m}\text{ i.o.}\right)=1.$
By $C_m^{J_m}\subseteq\{\mathcal{G}_m\geq J_m\}$ and the definition of $J_m$, we get
\begin{equation}\label{eq:2.24}
\limsup_{m\to\infty}\frac{\mathcal{G}_m}{\log m}
\geq
\lim_{\varepsilon\downarrow0}\lim_{m\to\infty}\frac{J_m}{\log m}
=
\lim_{\varepsilon\downarrow0}(1-\varepsilon)\theta
=\theta,~~\text{a.s.}.
\end{equation}

Combining \eqref{eq:2.21} and \eqref{eq:2.24} proves \eqref{eq:2.17},   and the proof is complete.
\end{proof}

\subsection{Proofs of Lemmas 2.1, 2.4 and 2.6}\label{sec:4}

\begin{proof}[\bf Proof of Lemma \ref{lem:2.1}]
If $\gamma=1$, then $\mathbf{P}(\hat{\tau}_0<\infty)=0$. By spatial homogeneity and the strong Markov property,  we get that
$\xi^*(n)=1,~n\ge0,~\text{a.s.}$.
Thus $\xi^*(n)/\log n\to0$, which agrees with the convention $\lambda=0$.
Now we deal with the case $0<\gamma<1$.

For every integer $r\geq1$, the strong Markov property gives
\begin{equation}\label{eq:4.1}
\mathbf{P}\left(\xi(0,\infty)\geq r\right)
=\sum_{i\geq r}\mathbf{P}\left(\xi(0,\infty)= i\right)
=(1-\gamma)^{r-1}.
\end{equation}
By spatial homogeneity, the strong Markov property and \eqref{eq:4.1}, we have, for any $\varepsilon\in(0,1)$,
\begin{equation}\label{eq:4.2}
\begin{aligned}
\mathbf{P}\left( \xi^*(n)\geq(1+\varepsilon)\lambda\log n \right)
&\leq
(n+1)\mathbf{P}\left(\xi(0,\infty)\geq(1+\varepsilon)\lambda\log n \right)\\
&\leq
(n+1)(1-\gamma)^{(1+\varepsilon)\lambda\log n-1}
\leq
Cn^{-\varepsilon}.
\end{aligned}
\end{equation}
Taking $n_k=2^k$, we obtain
$$
\sum_{k=1}^{\infty} \mathbf{P}\left(\xi^*(n_k)\geq(1+\varepsilon)\lambda\log n_k\right)<\infty.
$$
The Borel-Cantelli lemma therefore gives that $\xi^*(n_k)<(1+\varepsilon)\lambda\log n_k$ for all sufficiently large $k$, almost surely. For $n_k\leq n<n_{k+1}$, the monotonicity of $\xi^*(n)$ gives
$
\xi^*(n)\leq\xi^*(n_{k+1})<(1+\varepsilon)\lambda\log n_{k+1}.
$
Consequently,
$\frac{\xi^*(n)}{\log n}\leq(1+\varepsilon)\lambda\frac{\log n_{k+1}}{\log n_k},$
and hence
\begin{equation}\label{eq:4.3}
\limsup_{n\to\infty}\frac{\xi^*(n)}{\log n}
\leq
(1+\varepsilon)\lambda,~~\text{a.s.}.
\end{equation}

For the lower bound, set
\begin{equation}\label{eq:4.3-a}
u_n:=\left\lfloor\left(1-{\varepsilon}\right)\lambda\log n\right\rfloor,
~~
v_n:=\left\lfloor\frac{n}{u_n^2}\right\rfloor.
\end{equation}
For $k\ge1$, let $T_k$ be the $k$th return time to the starting point. By the strong Markov property,
\begin{equation}\label{eq:4.3-b}
\mathbf{P}\left(T_1\le u_n,\, T_k-T_{k-1}\le u_n,\ 2\le k\le u_n\right)
=(1-\gamma_{u_n})^{u_n}=:\beta_n.
\end{equation}
On this event, the starting point is visited more than $u_n$ times by time $u_n^2$.
Inside the time interval $[0,n]$, we take the first $v_n=\lfloor n/u_n^2\rfloor$ disjoint subintervals of length $u_n^2$. On each subinterval, starting from its left endpoint, we repeat the preceding construction. The corresponding events depend on disjoint blocks of increments, and hence are independent. By spatial homogeneity and the strong Markov property, each of them has probability $\beta_n$. If at least one of these events occurs, then $\xi^*(n)>u_n$. Therefore,
\begin{equation}\label{eq:4.4}
\mathbf{P}(\xi^*(n)\le u_n)
\leq
(1-\beta_n)^{v_n}.
\end{equation}

Since $\gamma_{u_n}\downarrow\gamma$ and $0<\gamma<1$, we have
$\log(1-\gamma_{u_n})=\log(1-\gamma)+o(1).$ Therefore, by  \eqref{eq:4.3-a}, \eqref{eq:4.3-b} and the definition of $\lambda$, we get
$$
\log\beta_n
=
\left\lfloor\left(1-{\varepsilon}\right)\lambda\log n\right\rfloor\left(\log(1-\gamma)+o(1)\right)
=
-\left(1-{\varepsilon}\right)(1+o(1))\log n.
$$
Thus $\beta_n=n^{-(1-\varepsilon)+o(1)}.$  Consequently,
$$
\beta_n v_n
=
n^{-(1-\varepsilon)+o(1)}\left\lfloor\frac{n}{\left\lfloor(1-\varepsilon)\lambda\log n
\right\rfloor^2}\right\rfloor.
$$
Hence, for every $\alpha\in(0,\varepsilon/2)$, $\beta_n v_n\ge n^\alpha$ for all sufficiently large $n$. Combining this with \eqref{eq:4.4}, we obtain for all sufficiently large $n$,
\begin{equation}\label{eq:4.5}
\mathbf{P}\left(\xi^*(n)\le\left(1-{\varepsilon}\right)\lambda\log n\right)
\leq  (1-\beta_n)^{v_n}
\leq  e^{-\beta_n v_n}
\leq  e^{-n^\alpha}.
\end{equation}
Since $\sum_{n=1}^{\infty}e^{-n^\alpha}<\infty,$ by the Borel-Cantelli lemma,  we have
$
\liminf_{n\to\infty}\frac{\xi^*(n)}{\log n}
\geq
\left(1-{\varepsilon}\right)\lambda,~\text{a.s.}.
$
Letting $\varepsilon\downarrow0$ proves the lower bound.
\end{proof}

\begin{proof}[\bf Proof of Lemma \ref{lem:2.4}.]
Equations \eqref{eq:4.5} and \eqref{eq:4.2} imply
\begin{equation}\label{eq:4.6}
\begin{aligned}
\mathbf{P}(D_n^c)
&=
\mathbf{P}\left( \xi^*(n)<(1-\varepsilon/2)\lambda\log n \ \text{or}\ \xi^*(n)>(2+\varepsilon/2)\lambda\log n\right)\\
&\leq
e^{-n^\alpha}+Cn^{-(1+\varepsilon/2)}
\leq
Cn^{-(1+\varepsilon/2)}.
\end{aligned}
\end{equation}
Recall that $\delta$ is defined in \eqref{lem-2.4-0}.
It follows from Lemma \ref{pro:2.3} and \eqref{lem-2.4-0} that
for any $A\geq 0$ and $n\geq1$,
\begin{equation}\label{eq:4.8}
\sup_{z\in \mathbb{Z}\setminus \{0\}}
\mathbf{P}\left(\xi(0,\infty)+\xi(z,\infty)\ge 2A\lambda\log n\right)
\le
C\left(1-\frac{\gamma}{a}\right)^{2A\lambda\log n}
=
C n^{-A(1+\delta)} .
\end{equation}

We now estimate $\mathbf{P}(E_n^c)$. 
 For $n, \tilde{n}>0$, define
\begin{equation*}
\bar{F}_{n, \tilde{n}}:=\sum_{i=1}^n \sum_{j=i+1}^{(i+ (\lambda \log n)^2)
\wedge n}
\mathbf{1}_{\left\{ \min \left(\xi\left(S_i, n\right), \xi\left(S_j, n\right) \right) \geq (1-\varepsilon) \lambda \log \tilde{n}, S_i, S_j \notin S_{(i, j)}, S_i \neq S_j\right\}}.
\end{equation*}
Note that for any $k \in\left[n^\iota,(n+1)^\iota\right)$, we have
$\bar{F}_k \leq \bar{F}_{(n+1)^\iota, n^\iota}$,  where $\bar{F}_k$ is defined by \eqref{eq:2.16-2}.
Consequently,
\begin{equation}\label{eq:4.9}
 E_k^c \subset\left\{\bar{F}_{(n+1)^\iota, n^\iota}>0\right\}.
\end{equation}

For any non-negative random variables $X, Y$ and  $q>0$, it is straightforward
to see that
\begin{equation}\label{eq:4.10}
\{X+Y \geq q\} \subset \bigcup_{r=0}^{[1 / \varepsilon]}\{X \geq r \varepsilon q,~ Y
\geq [1-(r+1) \varepsilon] q\}.
\end{equation}

Let
$$\hat{S}_l^i:=S_{i-l}-S_i~ ~\text{and}~ ~\widetilde{S}_l^j:=S_{j+l}-S_j.$$
Then, for any $1 \leq i \leq (n+1)^\iota, i < j \leq $ $\left(i+[\lambda \log
((n+1)^\iota)]^2 \right) \wedge(n+1)^\iota$,
by \eqref{eq:4.8} and \eqref{eq:4.10}, we have
\begin{align*}
& \mathbf{P}\left( \min \left(\xi\left(S_i,(n+1)^\iota\right),
\xi\left(S_j,(n+1)^\iota\right)\right) \geq(1-\varepsilon) \lambda \log (n^\iota), S_i,
S_j \notin S_{(i, j)}, S_i \neq S_j\right) \\
\leq &
\sum_{z \in \mathbb{Z}\setminus \{0\}}
\mathbf{P}\left(S_j-S_i=z, \sum_{l=0}^i \mathbf{1}_{\left\{\hat{S}_l^i \in\{0, z\} \right\}}+\sum_{l=0}^{(n+1)^\iota-j}
\mathbf{1}_{\left\{\widetilde{S}_l^j \in\{0,-z\}\right\}} \geq 2(1-\varepsilon) \lambda \log (n^\iota)\right)\\
\le &
\sum_{z \in \mathbb{Z}\setminus\{0\}}
\mathbf{P}\left(S_j-S_i=z\right) \cdot \sum_{r=0}^{\lfloor1 / \varepsilon\rfloor}
\mathbf{P}\left(\sum_{l=0}^i \mathbf{1}_{\left\{\hat{S}_l^i \in\{0, z\}\right\} }
\geq 2 r \varepsilon(1-\varepsilon) \lambda \log (n^\iota)\right) \\
&~~ \cdot \mathbf{P}\left(\sum_{l=0}^{(n+1)^\iota-j}
\mathbf{1}_{\left\{\widetilde{S}_l^j \in\{0,-z\}\right\} }\geq 2[1-(r+1)
\varepsilon](1-\varepsilon) \lambda \log (n^\iota)\right) \\
\leq & \sup_{r\in\{0,\ldots,\lfloor1/\varepsilon\rfloor\}}(\lfloor1/\varepsilon\rfloor+1)\cdot \sup_{z\in \mathbb{Z}\setminus 0}
\mathbf{P}\left( \xi(0, (n+1)^\iota)+\xi(z, (n+1)^\iota)\geq 2 r
\varepsilon(1-\varepsilon) \lambda \log (n^\iota)\right)\\
&~~ \cdot \sup_{z\in \mathbb{Z}\setminus 0}\mathbf{P}\left( \xi(0, (n+1)^\iota)+\xi(z, (n+1)^\iota)\geq 2[1-(r+1) \varepsilon](1-\varepsilon) \lambda \log (n^\iota)\right)\\
\leq &
 \sup_{r\in\{0,\ldots,\lfloor1/\varepsilon\rfloor\}}(\lfloor1/\varepsilon\rfloor+1)
C^2 n^{-\iota r\varepsilon(1-\varepsilon)(1+\delta)}\cdot
n^{-\iota([1-(r+1)\varepsilon]\vee0)(1-\varepsilon)(1+\delta)}
\leq
C n^{-\iota(1-\varepsilon)^2(1+\delta)},
\end{align*}
where in the second inequality we used \eqref{eq:4.10}, and
 in the fourth inequality,  we used \eqref{eq:4.8}.
Put $\nabla_i := (i+[\lambda \log ((n+1)^\iota)]^2) \wedge (n+1)^\iota$.
Summing over $i$, $j$  and using the assumption that $(1-\varepsilon)^2
(1+\delta)>1+\delta/2$ and $\iota \delta/2>1+\delta$, we get
\begin{align}\label{eq:4.11}
&\mathbf{P}\left(\bar{F}_{(n+1)^\iota, n^\iota}>0\right)\notag\\
= & \mathbf{P}\left( \sum_{i=1}^{(n+1)^\iota} \sum_{j=i+1}^{\nabla_i}
\mathbf{1}_{\left\{ \min \left(\xi\left(S_i, (n+1)^\iota\right), \xi\left(S_j,
(n+1)^\iota \right)\right) \geq (1-\varepsilon) \lambda \log (n^\iota), S_i, S_j \notin S_{(i,j)}, S_i \neq S_j\right\}} >0 \right)\notag\\
\leq & \sum_{i=1}^{(n+1)^\iota} \sum_{j=i+1}^{\nabla_i}  \mathbf{P}\left( \min
\left(\xi\left(S_i,(n+1)^\iota\right), \xi\left(S_j,(n+1)^\iota\right)\right) \geq
(1-\varepsilon) \lambda \log (n^\iota), S_i, S_j \notin S_{(i, j)}, S_i \neq
S_j\right)\notag\\
\leq & (n+1)^\iota \cdot  [\lambda \log(n+1)^\iota]^2  \cdot  C n^{- \iota (1-\varepsilon)^2(1+\delta)}
\leq C (\log n)^2 \cdot n^{-\iota\delta/2}
\leq Cn^{-(1+\delta)}.
\end{align}
Combining \eqref{eq:4.9} and \eqref{eq:4.11}, we obtain the bound
\begin{equation}\label{eq:4.12}
\mathbf{P}\left(\bigcup_{k\in[n^\iota,(n+1)^\iota)}E_k^c\right)
\leq
\mathbf{P}\left({\bar F}_{(n+1)^\iota,n^\iota}>0\right)
\leq
C n^{-(1+\delta)}.
\end{equation}

Therefore, by \eqref{eq:4.6} and \eqref{eq:4.12}, for all sufficiently large $N$,
$$
\begin{aligned}
\mathbf{P}\left(\bigcup_{n\ge N}(D_n\cap E_n)^c\right)
\leq&
\sum_{n\ge N}\mathbf{P}(D_n^c)+\sum_{r\ge \lfloor N^{1/\iota}\rfloor}
\mathbf{P}\left(\bigcup_{k\in[r^\iota,(r+1)^\iota)}E_k^c\right)\\
\leq&
C\sum_{n\ge N} n^{-(1+\varepsilon/2)} +C\sum_{r\ge \lfloor N^{1/\iota}\rfloor}r^{-(1+\delta)}
\leq
C N^{-\varepsilon/2} +C N^{-\delta/\iota}
\leq
C N^{-((\varepsilon/2)\wedge(\delta/\iota))}.
\end{aligned}
$$
Letting $N\to\infty$ gives
$\mathbf{P}\left(\bigcap_{N\ge1}\bigcup_{n\geq N}(D_n\cap E_n)^c\right)=0.$
Equivalently,
$
\mathbf{P}\left(\liminf\limits_{n\to\infty}(D_n\cap E_n)\right)=1.
$ The proof is complete.
\end{proof}

\begin{proof}[\bf Proof of Lemma \ref{lem:2.6}]
By Lemma~\ref{lem:2.4}, there exists a null event $\mathcal{N}_1$ such that, for every $\omega\in\Omega\setminus\mathcal{N}_1$, there exists a finite $N'=N'(\omega)\ge 2$ satisfying
$$
\omega\in D_n\cap E_n,~~ \forall n>N'.
$$
Moreover, since $\xi^*(n)\to\infty$ almost surely, there exists a null event $\mathcal{N}_2$  such that, for every $\omega\in\Omega\setminus\mathcal{N}_2$,
$$
\mathbf{1}_{C_m^1}(\omega)=1,~ \forall m\geq1.
$$
Set $\mathcal{N}:=\mathcal{N}_1\cup\mathcal{N}_2$. In what follows, we fix
$\omega\in\Omega\setminus\mathcal{N}$.

If $n>N'$ and $\xi^*(n,\omega)=m$, then the definition of $D_n$ gives
\begin{equation}\label{eq:4.13}
\exp\left(\frac{m}{(2+\varepsilon/2)\lambda}\right)
\leq n \leq
\exp\left(\frac{m}{(1-\varepsilon/2)\lambda}\right).
\end{equation}
Choose $M'=M'(\varepsilon)$ such that, for all $m\ge M'$,
\begin{equation}\label{eq:4.14}
(1-\varepsilon)\lambda \log\left\{\exp\left(\frac{m+1}{(1-\varepsilon/2)\lambda}\right) +\frac{m^2}{16}\right\}
\leq m.
\end{equation}
Set
\begin{equation}\label{eq:4.15}
M:=M'\vee \xi^*(N')\vee(\lambda\log N')^2.
\end{equation}

We first prove that, for all $m>M$ and $k\geq1$,
$\omega\in C_m^k$ implies $\omega\in \widetilde{A}_m^{k+1}.$
The case $k=1$ is immediate from the definition of $\widetilde{A}_m^2$. Let $k\geq 2$. Suppose, on the contrary, $\omega\in C_m^k$ but $ \omega\notin \widetilde{A}_m^{k+1}$.
Since $C_m^k=\{T_m^k<T_{m+1}^1\}$, the maximal local time at time $T_m^k(\omega)$ is $m$, and
$L_m^1(\omega),\ldots,L_m^k(\omega)$ are favorite sites.
By the definition of $\widetilde{A}_m^{k+1}$, we know that 
for some $k'\in\{1,\ldots,k-1\}$, the walk visits $L_m^{k'}(\omega)$ during $\left(T_m^k(\omega),T_m^k(\omega)+\frac{m^2}{16}
\right].$
Put
$$
\bar{T}_m^k(\omega):=T_m^k(\omega)+\lfloor m^2/16\rfloor.
$$
Then there exist $i<j\leq\bar{T}_m^k(\omega)$ such that $S_i(\omega),S_j(\omega)\in\{L_m^k(\omega),L_m^{k'}(\omega)\},~S_i(\omega)\neq S_j(\omega),~j-i\leq\frac{m^2}{16},$ and $S_i(\omega),S_j(\omega)\notin S_{(i,j)}(\omega),~\xi(S_i(\omega),\bar{T}_m^k(\omega))\wedge\xi(S_j(\omega),\bar{T}_m^k(\omega))\geq m.$
Since $m>M\geq \xi^*(N')(\omega)$, we have $T_m^1(\omega)>N'$. Applying \eqref{eq:4.13} at $T_m^1(\omega)$ and $T_{m+1}^1(\omega)$, we get
\begin{equation}\label{eq:4.16}
\exp\left(\frac{m}{4\lambda}\right)
\leq
T_m^1(\omega)<T_m^k(\omega)<T_{m+1}^1(\omega)
\leq
\exp\left(\frac{m+1}{(1-\varepsilon/2)\lambda}\right).
\end{equation}
Thus $\bar T_m^k(\omega)>N'$.
Moreover, by \eqref{eq:4.14} and \eqref{eq:4.16}, for all sufficiently large $m$,
$$
m\geq(1-\varepsilon)\lambda\log\bar{T}_m^k(\omega),
~~
\frac{m^2}{16}<(\lambda\log\bar{T}_m^k(\omega))^2 .
$$
Therefore, at time $\bar{T}_m^k(\omega)$, the sites $L_m^k(\omega)$ and $L_m^{k'}(\omega)$ form a pair counted by $\bar{F}_{\bar{T}_m^k(\omega)}$. This contradicts $E_{\bar{T}_m^k(\omega)}=\{\bar{F}_{\bar{T}_m^k}(\omega)=0\}$.
This proves the claim.

We now prove that, for all $m>M$ and $k\geq2$, $\omega\in B_m^k$ implies $\omega \in C_m^k$.  Assume $\omega\in B_m^k$. Recall that, for $k\geq 1$,
$$
B_m^k=A_m^1\cap\cdots\cap A_m^k,
~~
\widetilde{B}_m^k=\widetilde{A}_m^1\cap\cdots\cap\widetilde{A}_m^k,
~~
C_m^k=B_m^k\cap\widetilde{B}_m^k .
$$
Equivalently, for $k\geq2$,
$$
C_m^k
=
C_m^{k-1}\cap A_m^k\cap\widetilde{A}_m^k .
$$
Since $\xi^*(n)\to\infty$, we have $\omega\in C_m^1$ for every $m\geq1$.
Let $m>M$. We prove by induction on $j$ that $\omega\in C_m^j$ for $1\leq j\leq k$. The case $j=1$ is immediate. If $\omega\in C_m^j$ for some  $1\leq j\leq k-1$, then $\omega\in A_m^{j+1}$ because $\omega\in B_m^k$, and the claim gives
$\omega\in\widetilde A_m^{j+1}$. Hence
$$
\omega\in C_m^j\cap A_m^{j+1}\cap\widetilde{A}_m^{j+1}=C_m^{j+1}.
$$
The induction gives $\omega\in C_m^k$, and hence
 $1_{B_m^k}(\omega)\leq  1_{C_m^k}(\omega),~m>M,~ k\geq2.$

 The reverse inequality $1_{B_m^k}(\omega)\geq  1_{C_m^k}(\omega)$ is obvious since
 $C_m^k=B_m^k\cap\widetilde{B}_m^k.$ Thus, for every $\omega\in\Omega\setminus\mathcal{N}$,
$$
\mathbf{1}_{C_m^k}(\omega)=\mathbf{1}_{B_m^k}(\omega),~~m>M,~~k\geq2.
$$
The proof is complete.
\end{proof}

\section{Proofs of Theorems \ref{thm:1.4} and  \ref{thm:1.5}}\label{sec:4}

To prove Theorems~\ref{thm:1.4} and~\ref{thm:1.5}, we first introduce a regeneration structure and prove several auxiliary results.

\subsection{Regeneration Structure }
For a  random variable $Y$ on a probability space $(\Omega, \mathcal{F}, \mathbf{P})$, we use
$$
\mathcal{L}_{\mathbf{P}}(Y):=\mathbf{P}\circ Y^{-1}
$$
to denote the distribution of $Y$ under $\mathbf{P}$. Set
\begin{align}\label{3.0}
\mathcal{A}_0:=\{S_j\geq0,\ \forall j\geq0\}.
\end{align}
Since $\mu\in (0, \infty)$, \cite[Lemma~2.1]{Vilkas2025} yields
\begin{align}\label{3.1}
p:=\mathbf{P}(\mathcal{A}_0)=\mathbf{P}\left(S_j\geq0,\ \forall j\geq0\right)>0.
\end{align}
Define
$\bar{\mathbf{P}}(\,\cdot\,):=\mathbf{P}(\,\cdot\mid\mathcal{A}_0),$
and let $\bar{\mathbf{E}}$ denote the corresponding expectation.

Define the strict ascending ladder epochs by
\begin{align}\label{3.1-a}
\sigma_0:=0,
~~
\sigma_{k+1}
:=
\inf\left\{n>\sigma_{k}: S_n>\max_{0\leq j<n}S_j \right\},~~ \forall k\geq 0.
\end{align}
Set
\begin{align}\label{3.1-b}
Z_k:=S_{\sigma_k},
~~
\bar{\sigma}_k:=\sigma_k- \sigma_{k-1},
~~
\bar{Z}_k:=Z_k-Z_{k-1},~~k\geq1.
\end{align}
For $k\geq1$, define the strict ascending ladder block by
\begin{align}\label{3.1-c}
\Lambda_k
:=
\left(\bar{\sigma}_k,\,\bar{Z}_k,\,  \left(S_{\sigma_{k-1}+j}-S_{\sigma_{k-1}}\right)_{0\leq j<\bar{\sigma}_k} \right).
\end{align}

\begin{lem}\label{lem:3.1}
Almost surely,
\begin{equation}\label{eq:3.1}
\sigma_k<\infty,~~\forall k\geq1.
\end{equation}
Moreover,
\begin{equation}\label{eq:3.2}
(\Lambda_k)_{k\geq1}~~\text{is i.i.d.}.
\end{equation}
In addition,
\begin{equation}\label{eq:3.3}
\mathbf{E}[\bar{\sigma}_1]<\infty,~~
 \mathbf{E}[\bar{Z}_1]=\mu\mathbf{E}[\bar{\sigma}_1]<\infty.
\end{equation}
\end{lem}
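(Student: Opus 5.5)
The plan is to treat $\sigma_k$ as the standard strict ascending ladder epochs of a random walk with positive drift and to prove the three parts in order: finiteness, then the i.i.d. structure, then the moment identity.

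\emph{Finiteness.} By the strong law of large numbers, $S_n\to+\infty$ a.s., so $\max_{0\le j<n}S_j\to\infty$. Suppose $\sigma_k<\infty$. Then, after $\sigma_k$, the walk exceeds the current maximum $Z_k$ at some finite time, which gives $\sigma_{k+1}<\infty$. By induction on $k$, \eqref{eq:3.1} holds on the full-measure event $\{S_n\to\infty\}$.

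\emph{I.i.d. structure.} Each $\sigma_k$ is a stopping time with respect to $(\mathcal{F}_n)$. The key observation is that
\[
\sigma_{k+1}-\sigma_k=\inf\{n\ge1: S_{\sigma_k+n}-S_{\sigma_k}>0\}.
\]
This holds because $Z_k=\max_{0\le j\le\sigma_k}S_j$, so exceeding the past maximum after $\sigma_k$ is the same as exceeding $S_{\sigma_k}$. Consequently $\Lambda_{k+1}=\Phi\big((S_{\sigma_k+j}-S_{\sigma_k})_{j\ge0}\big)$ for the same measurable map $\Phi$ that gives $\Lambda_1=\Phi\big((S_j)_{j\ge 0}\big)$. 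The strong Markov property at $\sigma_k$ (on $\{\sigma_k<\infty\}$, which has probability one) together with spatial homogeneity shows that the shifted path is independent of $\mathcal{F}_{\sigma_k}$ and has the law of $(S_j)_{j\ge0}$. Since $\Lambda_1,\dots,\Lambda_k$ are $\mathcal{F}_{\sigma_k}$-measurable, an induction on $k$ gives
\[
\mathbf{P}(\Lambda_1\in B_1,\dots,\Lambda_{k+1}\in B_{k+1})=\prod_{i}\mathbf{P}(\Lambda_1\in B_i).
\]
This proves \eqref{eq:3.2}.

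\emph{Moments.} This is the main step. To show $\mathbf{E}[\bar\sigma_1]<\infty$, use the classical identity
\[
\mathbf{E}[\sigma_1]=\sum_{n\ge0}\mathbf{P}(\sigma_1>n)=\sum_{n\ge0}\mathbf{P}\big(S_j\le 0,\ 1\le j\le n\big).
\]
By duality, that is, by reversing the increments $(X_n,\dots,X_1)$, we have
\[
\mathbf{P}(S_j\le0,\ 1\le j\le n)=\mathbf{P}\big(S_n\le S_i \text{ for all } 0\le i<n\big),
\]
which is the probability that $n$ is a weak descending ladder time. Summing over $n$ gives $\mathbf{E}[\sigma_1]=\mathbf{E}[\#\{\text{weak descending ladder times}\}]$. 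The weak descending ladder times form a renewal sequence whose number of terms is geometric with parameter $\mathbf{P}(\text{no weak descending ladder time after } 0)=\mathbf{P}(S_n>0\ \forall n\ge1)=\gamma_+$. Hence $\mathbf{E}[\sigma_1]=1/\gamma_+<\infty$, since $\gamma_+>0$ by \cite[Lemma~2.1]{Vilkas2025} as used in the proof of Theorem~\ref{thm:1.2}.

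Finally, Wald's identity applies because $\sigma_1$ is a stopping time with finite mean and $\mathbf{E}|X_1|<\infty$. It gives $\mathbf{E}[\bar Z_1]=\mathbf{E}[S_{\sigma_1}]=\mu\,\mathbf{E}[\sigma_1]<\infty$, which is \eqref{eq:3.3}.

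The obstacle I expect is the duality and renewal-counting step: the bookkeeping between weak and strict ladder times has to be done carefully. An alternative route is to use the SLLN on the i.i.d. blocks: if $\mathbf{E}\bar\sigma_1=\infty$, then $\sigma_k/k\to\infty$, while $Z_k/\sigma_k$ stays bounded. I would keep this as a fallback.
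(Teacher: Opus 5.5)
Your proof is correct. The finiteness and i.i.d.\ parts follow the paper's argument: the SLLN, then the strong Markov property at $\sigma_k$ applied to the shifted path, whose first strict ladder block is $\Lambda_{k+1}$.

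The difference is in the moment bound. The paper gets $\mathbf{E}[\bar\sigma_1]<\infty$ by citing \cite[Theorem~3.3.1]{Gut2009} and then applies Wald's identity. You instead derive it from Feller's duality, $\mathbf{P}(\sigma_1>n)=\mathbf{P}(S_n\le\min_{0\le i<n}S_i)$, together with the geometric number of weak descending ladder epochs. This gives the explicit identity $\mathbf{E}[\sigma_1]=1/\gamma_+$. Your bookkeeping is right: $\sigma_1=\inf\{n\ge1:S_n>0\}$ because $S_0=0$, the time $n=0$ is counted, and the epochs regenerate via $\inf\{n>T:S_n\le S_T\}$. So the obstacle you anticipated does not arise.

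Your route is self-contained and yields an exact formula. You do not even need the Vilkas citation for $\gamma_+>0$, since your renewal structure gives it. If $\gamma_+=0$, the strong Markov property makes every weak descending ladder epoch finite, so $S_n\le0$ infinitely often, contradicting $S_n\to+\infty$. The paper's route is shorter but rests entirely on the external reference.

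One caveat: the SLLN fallback you mention would not close on its own. From $\mathbf{E}[\bar\sigma_1]=\infty$ you only get $\sigma_k/k\to\infty$, and with $Z_k/\sigma_k\to\mu$ this gives $Z_k/k\to\infty$, i.e.\ $\mathbf{E}[\bar Z_1]=\infty$. That is a contradiction only if you already know $\mathbf{E}[\bar Z_1]<\infty$, which Wald provides only after $\mathbf{E}[\bar\sigma_1]<\infty$ is established.
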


\begin{proof}[\bf Proof]
By the strong law of large numbers, $\frac{S_n}{n}\to\mu>0~\text{a.s.}$. Hence $S_n\to\infty$ a.s., which implies \eqref{eq:3.1}.

For $k\geq1$,  we have
$$
\bar{\sigma}_k
=
\inf\left\{n\geq1:S_{\sigma_{k-1}+n}-S_{\sigma_{k-1}}>0\right\},
~~
\bar{Z}_k=S_{\sigma_{k-1}+\bar{\sigma}_k}-S_{\sigma_{k-1}}.
$$
Thus $\Lambda_k$ is the first strict ascending ladder block of the process
$\left(S_{\sigma_{k-1}+n}-S_{\sigma_{k-1}}\right)_{n\geq0}.$
For any bounded measurable function $f$, the strong Markov property at
$\sigma_{k-1}$ gives
$$
\mathbf{E}\left[f(\Lambda_k)\mid\mathcal{F}_{\sigma_{k-1}}\right]
=\mathbf{E}[f(\Lambda_1)],\  \forall k\geq 1,~~\text{a.s.}.
$$
In particular,
$\mathbf{E}[f(\Lambda_k)]=\mathbf{E}[f(\Lambda_1)],\ \forall k\geq1,$
and thus
$\Lambda_k$ and $\Lambda_1$ have the same distribution.
Moreover,
$\sigma(\Lambda_1,\ldots,\Lambda_{k-1})\subseteq\mathcal{F}_{\sigma_{k-1}}.$
Hence, for any $n\geq1$ and bounded measurable functions $f_1,\ldots,f_n$,  we have
$$
\begin{aligned}
\mathbf{E}\left[\prod_{k=1}^n f_k(\Lambda_k)\right]
&=
\mathbf{E}\left[\prod_{k=1}^{n-1}f_k(\Lambda_k)
\mathbf{E}\left[f_n(\Lambda_n)\mid\mathcal{F}_{\sigma_{(n-1)}}\right]\right]
=
\mathbf{E}\left[\prod_{k=1}^{n-1}f_k(\Lambda_k)\right]
\mathbf{E}[f_n(\Lambda_1)]\\
&=
\prod_{k=1}^n\mathbf{E}[f_k(\Lambda_1)]
=
\prod_{k=1}^n\mathbf{E}[f_k(\Lambda_k)].
\end{aligned}
$$
Therefore, $(\Lambda_k)_{k\geq1}$ is an i.i.d. sequence, which implies \eqref{eq:3.2}.

Finally,
$$
\bar{\sigma}_1=\sigma_1:=\inf\{n\geq1:S_n>0\},
~~
\bar{Z}_1=S_{\bar{\sigma}_1}.
$$
Since $\mu\in (0,\infty)$, Gut~\cite[Theorem~3.3.1]{Gut2009} yields $\mathbf{E}[\bar{\sigma}_1]<\infty.$ By Wald's identity and Gut~\cite[Theorem~1.5.3]{Gut2009}, we get
$$
\mathbf{E}[\bar{Z}_1]
=\mathbf{E}[S_{\bar{\sigma}_1}]
=\mu\mathbf{E}[\bar{\sigma}_1]<\infty.
$$
Hence \eqref{eq:3.3} holds. The proof is complete.
\end{proof}

Recall that $\mathcal{A}_0=\{S_j\geq0,\ \forall j\geq0\}$ and
$p=\mathbf{P}(\mathcal{A}_0)>0$ (cf. \eqref{3.1}). For $k\geq1$, let
\begin{equation}\label{eq:3.3-a}
\mathcal{A}_k
:=
\left\{
S_{\sigma_k+\ell}\geq S_{\sigma_k},
\ \forall \ell\geq0
\right\}.
\end{equation}
For $k,r\geq1$, define
\begin{equation}\label{eq:3.3-b}
A_k:=\sum_{j=1}^k\mathbf{1}_{\mathcal{A}_j},
~~
a_r:=\inf\{k\geq1:A_k=r\}.
\end{equation}

By (\ref{3.1-b}) and (\ref{3.1-c}), we know that  for any $k\geq 1$,
\begin{align*}
\Lambda_k
=
\left(\sigma_k-\sigma_{k-1},\ S_{\sigma_k}-S_{\sigma_{k-1}},\ \left(S_{\sigma_{k-1}+j}-S_{\sigma_{k-1}}\right)_{0\leq j<\sigma_k-\sigma_{k-1}} \right).
\end{align*}
For convenience, write $\Lambda_k$ as $
(\Lambda_{k,1},\Lambda_{k,2},\Lambda_{k,3}),
$
where
$$
\Lambda_{k,1}=\sigma_k-\sigma_{k-1},\ \Lambda_{k,2}=S_{\sigma_k}-S_{\sigma_{k-1}},\ \Lambda_{k,3}=\left(S_{\sigma_{k-1}+j}-S_{\sigma_{k-1}}\right)_{0\leq j<\sigma_k-\sigma_{k-1}}.
$$
For any real-valued random variable $a$ and any  positive integer $k$, we define
$$
a+\Lambda_{k, 3}:=\left(a+S_{\sigma_{k-1}+j}-S_{\sigma_{k-1}}\right)_{0\leq j<\sigma_k-\sigma_{k-1}}.
$$
 For $r\geq0$, define
$$\theta^r(\Lambda_1,\Lambda_2,\Lambda_3,\ldots):=(\Lambda_{r+1},\Lambda_{r+2},\Lambda_{r+3},\ldots),$$
and
$$
\Phi(\Lambda_{r+1},\Lambda_{r+2},\Lambda_{r+3},\ldots):=\left(\Lambda_{r+1,3}, \Lambda_{r+1,2}+\Lambda_{r+2,3},\sum_{j=r+1}^{r+2}\Lambda_{j,2}+\Lambda_{r+3,3},
\sum_{j=r+1}^{r+3}\Lambda_{j,2}+\Lambda_{r+4,3},\ldots \right).
$$
Then one can check that
\begin{align*}
&\Phi(\Lambda_1,\Lambda_2,\Lambda_3,\ldots)=
(0, S_1,S_2,\ldots),\\
&\Phi(\Lambda_{r+1},\Lambda_{r+2},\Lambda_{r+3},\ldots)=(0, S_{\sigma_r+1}-S_{\sigma_r},S_{\sigma_r+2}-S_{\sigma_r},\ldots),\ \forall r\geq 1.
\end{align*}
It follows that for any $r\geq 0$,
\begin{align*}
\mathbf{1}_{\mathcal{A}_r}
=(\mathbf{1}_{\mathcal{A}_0}\circ\Phi)
\left(\theta^r(\Lambda_1,\Lambda_2,\Lambda_3,\ldots)\right).
\end{align*}

By Lemma~\ref{lem:3.1}, the shift $\theta$ is measure preserving and ergodic, so Birkhoff's ergodic theorem \cite[Theorem~6.2.1]{D2019} gives
\begin{equation}\label{eq:3.4}
\begin{aligned}
\frac{A_k}{k}
=&
\frac{1}{k}\sum_{j=1}^k\mathbf{1}_{\mathcal{A}_j}
=
\frac{1}{k}\sum_{j=1}^k
(\mathbf{1}_{\mathcal{A}_0}\circ\Phi)\bigl(\theta^j(\Lambda_1,\Lambda_2,\ldots)\bigr)\\
\to&
\mathbf{E}\left[(\mathbf{1}_{\mathcal{A}_0}\circ\Phi)(\Lambda_1,\Lambda_2,\ldots)\right]
=
\mathbf{E}\left[\mathbf{1}_{\mathcal{A}_0}\right]
=
\mathbf{P}(\mathcal{A}_0)
=
p,~~\text{a.s.}.
\end{aligned}
\end{equation}
Since $p>0$, \eqref{eq:3.4} gives $A_k\to\infty$ almost surely. Hence, almost surely,
$a_r<\infty,\ \forall r\geq1, \lim_{r\to\infty}a_r=\infty.$
By the fact that $A_{a_r}=r$ and  \eqref{eq:3.4}, we have
\begin{equation}\label{eq:3.5}
\frac{r}{a_r}=\frac{A_{a_r}}{a_r}\to p,
\quad \frac{a_r}{r}\to\frac{1}{p},~~\text{a.s.}.
\end{equation}

Following Sznitman and Zerner \cite{SZ1999}, we introduce the regeneration epochs and prove the following two lemmas by adapting the arguments of their Proposition~1.3 and Theorem~1.4.
Set $\tau_0:=0$ and, for $r\geq1$, define
\begin{equation}\label{eq:3.6}
\tau_r:=
\inf\left\{n>\tau_{r-1}: S_n>\max_{0\leq j<n}S_j,\ S_{n+i}\geq S_n,\ \forall i\geq0 \right\}.
\end{equation}
Then by (\ref{3.1-a}), (\ref{eq:3.3-a}) and (\ref{eq:3.3-b}), we have  that  $\tau_r=\sigma_{a_r}, \forall r\geq 1$.
We call $\tau_r$ the $r$th regeneration epoch. Equivalently, it is the $r$th strict ascending ladder epoch after which the random walk never falls below its ladder height. Hence
$$
S_n<S_{\tau_r},~\forall~n\in[0, \tau_r)
~~\text{and}~~
S_n\geq S_{\tau_r},~\forall~n\geq\tau_r.
$$
$\tau_r$ is not a stopping time with respect to the natural filtration. Moreover,  by $\tau_r=\sigma_{a_r}$ and (\ref{3.1-b}), we have
$$
S_{\tau_r}=Z_{a_r},~~r\geq1,
~~\text{and}~~
0<\tau_1<\tau_2<\cdots<\infty,~~\text{a.s.}.
$$

For $n\geq1$, set
$$
B_n:=\{n\}\times\mathbb{Z}\times\mathbb{Z}^n,
~~
\mathcal{B}:=\bigcup_{n\geq1}B_n,
$$
and equip $\mathcal{B}$ with the discrete $\sigma$-field
$\sigma(\mathcal{B}):=2^{\mathcal{B}}.$
Let
$$
\mathcal{C}:=\mathbb{Z}^{\mathbb{Z}_+},
$$
and equip $\mathcal{C}$ with the product $\sigma$-field
$\sigma(\mathcal{C}):=\bigotimes_{j=0}^{\infty}2^{\mathbb{Z}}.$
Finally, set
$$
\Gamma_1:=\left(\tau_1,\,S_{\tau_1},\,(S_j)_{0\leq j<\tau_1}\right).
$$

\begin{lem}\label{lem:3.2}
For any bounded measurable functions $f:\mathcal{B}\to\mathbf{R}$ and $g:\mathcal{C}\to\mathbf{R}$,
  \begin{equation}\label{eq:3.7}
\mathbf{E}\left[f(\Gamma_1) g(\left(S_{\tau_1+i}-S_{\tau_1}\right)_{i\geq0})\right]
=
\mathbf{E}[f(\Gamma_1)]\bar{\mathbf{E}}\left[g((S_j)_{j\geq0})\right],
\end{equation}
and
  \begin{equation}\label{eq:3.8}
\bar{\mathbf{E}}\left[f(\Gamma_1)g(\left(S_{\tau_1+i}-S_{\tau_1}\right)_{i\geq0})\right]
=
\bar{\mathbf{E}}[f(\Gamma_1)]\bar{\mathbf{E}}\left[g((S_j)_{j\geq0})\right].
\end{equation}
\end{lem}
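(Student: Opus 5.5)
The plan is the standard Sznitman--Zerner decomposition over the value of the ladder index $a_1$ at which the first regeneration occurs. Since $\tau_1=\sigma_{a_1}$, we have $\Gamma_1=\bigl(\sigma_{k},S_{\sigma_k},(S_j)_{0\le j<\sigma_k}\bigr)$ on $\{a_1=k\}$, and
\[
\{a_1=k\}=\Bigl(\bigcap_{j=1}^{k-1}\mathcal{A}_j^{\mathrm c}\Bigr)\cap\mathcal{A}_k .
\]
The events $\mathcal{A}_j$ are not $\mathcal{F}_{\sigma_k}$-measurable, because each looks into the whole future. So the first step is to rewrite $\{a_1=k\}$ in a form that separates past and future.

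\textbf{Key observation.} For $j<k$, on $\mathcal{A}_k$ the walk after $\sigma_k$ stays at or above $S_{\sigma_k}>S_{\sigma_j}$. Therefore $\mathcal{A}_j^{\mathrm c}\cap\mathcal{A}_k=\widetilde{\mathcal{A}}_{j,k}^{\mathrm c}\cap\mathcal{A}_k$, where
\[
\widetilde{\mathcal{A}}_{j,k}:=\{S_{\sigma_j+\ell}\ge S_{\sigma_j},\ 0\le \ell\le \sigma_k-\sigma_j\}\in\mathcal{F}_{\sigma_k}.
\]
Hence $\{a_1=k\}=G_k\cap\mathcal{A}_k$ with $G_k:=\bigcap_{j<k}\widetilde{\mathcal{A}}_{j,k}^{\mathrm c}\in\mathcal{F}_{\sigma_k}$. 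Moreover $\mathcal{A}_k=\{(S_{\sigma_k+i}-S_{\sigma_k})_{i\ge0}\in\mathcal{A}_0'\}$, where $\mathcal{A}_0'$ is the path set defining $\mathcal{A}_0$. On $\{a_1=k\}$ the post-$\tau_1$ process is $(S_{\sigma_k+i}-S_{\sigma_k})_{i\ge0}$.

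\textbf{Main computation.} Using $\sigma_k<\infty$ a.s. (Lemma~\ref{lem:3.1}), the strong Markov property at the stopping time $\sigma_k$, and spatial homogeneity, I expect
\[
\mathbf{E}\bigl[f(\Gamma_1)g(\cdot)\bigr]=\sum_{k\ge1}\mathbf{E}\Bigl[f\bigl(\sigma_k,S_{\sigma_k},S_{[0,\sigma_k)}\bigr)\mathbf{1}_{G_k}\Bigr]\,\mathbf{E}\bigl[g((S_j)_{j\ge0})\mathbf{1}_{\mathcal{A}_0}\bigr].
\]
The right-hand side equals $\sum_k \mathbf{E}[f(\ldots)\mathbf{1}_{G_k}]\,p\,\bar{\mathbf{E}}[g]$.

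Taking $g\equiv1$ gives $\mathbf{E}[f(\Gamma_1)]=p\sum_k\mathbf{E}[f(\ldots)\mathbf{1}_{G_k}]$, using $a_1<\infty$ a.s. from \eqref{eq:3.5}. Substituting this back yields \eqref{eq:3.7}. The interchange of sum and expectation is justified by boundedness of $f,g$ and disjointness of the events $\{a_1=k\}$.

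\textbf{The conditioned identity \eqref{eq:3.8}.} Here I would repeat the argument with the extra factor $\mathbf{1}_{\mathcal{A}_0}$. On $\{a_1=k\}$ the walk after $\sigma_k$ stays at or above $S_{\sigma_k}>0$. Hence $\mathcal{A}_0\cap\{a_1=k\}=\{S_j\ge0,\ 0\le j<\sigma_k\}\cap\{a_1=k\}$, and the first event on the right is $\mathcal{F}_{\sigma_k}$-measurable. In fact $\mathbf{1}_{\mathcal{A}_0}$ is a function of $\Gamma_1$, namely $\mathbf{1}\{\min_{j<\tau_1}S_j\ge0\}$.

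So \eqref{eq:3.8} follows by applying \eqref{eq:3.7} with $f$ replaced by $f\cdot h$, where $h(\Gamma_1)=\mathbf{1}_{\mathcal{A}_0}$, and then dividing by $p$. The same substitution with $g\equiv1$ gives $\mathbf{E}[fh]=p\,\bar{\mathbf{E}}[f]$, which produces the factorization.

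\textbf{Main obstacle.} The delicate step is the rewriting $\{a_1=k\}=G_k\cap\mathcal{A}_k$, i.e.\ checking that failure of the earlier $\mathcal{A}_j$ is already decided by time $\sigma_k$ given $\mathcal{A}_k$. This relies on $S_{\sigma_k}>S_{\sigma_j}$, the strictness of the ladder heights. Everything else is a routine strong Markov calculation.
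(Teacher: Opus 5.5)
Your proposal is correct and follows essentially the same argument as the paper. In both, the event that the first regeneration occurs at a given place is written as a past-measurable event intersected with the event that the shifted future path lies in $\mathcal{A}_0$; the two factors are separated by the Markov property, the constant is identified by taking $g\equiv1$, and \eqref{eq:3.8} follows by applying \eqref{eq:3.7} to $fh$ with $h(\Gamma_1)=\mathbf{1}_{\mathcal{A}_0}$. The only difference is cosmetic: the paper decomposes over the time value, $\{\tau_1=n\}=B_n\cap\{S_{n+j}-S_n\ge0,\ \forall j\ge0\}$ with $B_n\in\mathcal{F}_n$, and uses the simple Markov property at the deterministic time $n$, whereas you decompose over the ladder index, $\{a_1=k\}=G_k\cap\mathcal{A}_k$, and use the strong Markov property at $\sigma_k$; your ``key observation'' is exactly what makes $B_n$ past-measurable in the paper.
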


\begin{proof}[\bf Proof]
For $n\geq1$, define
$$
B_n:=
\left\{S_n>\max_{0\leq j<n}S_j\right\}
\cap
\bigcap_{\ell=1}^{n-1}\left(\left\{S_\ell\leq\max_{0\leq j<\ell}S_j\right\}
\cup
\left\{\min_{\ell<j\leq n}S_j<S_\ell\right\}\right).
$$
Then $B_n\in\mathcal{F}_n$ and
$\{\tau_1=n\}=B_n\cap\left\{S_{n+j}-S_n\geq0,\ \forall j\geq0\right\}.$
For each deterministic $n$, $\left(S_{n+i}-S_n\right)_{i\geq0}$ is independent of $\mathcal{F}_n$ and has the same law as $(S_j)_{j\geq0}$. Hence
$$
\begin{aligned}
&\mathbf{E}\left[f(\Gamma_1)g(\left( S_{\tau_1+i}-S_{\tau_1}\right)_{i\geq0})\right]\\
&=
\sum_{n\geq1}\mathbf{E}\left[f\left(n,S_n,(S_j)_{0\leq j<n}\right)
\mathbf{1}_{B_n}
g(\left(S_{n+i}-S_n\right)_{i\geq0})
\mathbf{1}_{\{S_{n+j}-S_n\geq0,\ \forall j\geq0\}}\right]\\
&=
\sum_{n\geq1}\mathbf{E}\left[f\left(n,S_n,(S_j)_{0\leq j<n}\right)
\mathbf{1}_{B_n}
\mathbf{E}\left[g(\left(S_{n+i}-S_n\right)_{i\geq0})
\mathbf{1}_{\{S_{n+j}-S_n\geq0,\ \forall j\geq0\}} \,\middle|\, \mathcal{F}_n\right]\right]\\
&=
\sum_{n\geq1}\mathbf{E}\left[f\left(n,S_n,(S_j)_{0\leq j<n}\right) \mathbf{1}_{B_n}\right]
\mathbf{E}\left[g((S_j)_{j\geq0})\mathbf{1}_{\{S_j\geq0,\ \forall j\geq0\}} \right]\\
&=
p\,\bar{\mathbf{E}}\left[g((S_j)_{j\geq0})\right]
\sum_{n\geq1}\mathbf{E}\left[f\left(n,S_n,(S_j)_{0\leq j<n}\right) \mathbf{1}_{B_n}\right].
\end{aligned}
$$
Taking $g\equiv1$ gives
$
\mathbf{E}[f(\Gamma_1)]
=
p\sum_{n\geq1}\mathbf{E}\left[f\left(n,S_n,(S_j)_{0\leq j<n}\right) \mathbf{1}_{B_n}\right].
$
Substituting this identity into the preceding display proves \eqref{eq:3.7}.

On $\{\tau_1=n\}$, we have  $S_{n+j}\geq S_n>0,~\forall j\geq0.$ Therefore,
$$
\mathcal{A}_0\cap\{\tau_1=n\}
=
\left\{\min_{0\leq j<n}S_j\geq0\right\}\cap\{\tau_1=n\}.
$$
Define $h:\mathcal{B}\to\{0,1\}$ by
$
h\left(n,x,(x_j)_{0\leq j<n}\right):=\mathbf{1}_{\{\min_{0\leq j<n}x_j\geq0\}}.
$
Since $\tau_1<\infty$ a.s., $\mathbf{1}_{\mathcal{A}_0}=h(\Gamma_1)~\text{a.s.}$.  Applying \eqref{eq:3.7} with $fh$ in place of $f$, we obtain
$$
\begin{aligned}
&\bar{\mathbf{E}}\left[f(\Gamma_1)g(\left( S_{\tau_1+i}-S_{\tau_1}\right)_{i\geq0})\right]\\
&=
\frac{1}{p}\mathbf{E}\left[f(\Gamma_1)g(\left(S_{\tau_1+i}-S_{\tau_1}\right)_{i\geq0})
\mathbf{1}_{\mathcal{A}_0}\right]
=
\frac{1}{p}\mathbf{E}\left[f(\Gamma_1)h(\Gamma_1) g(\left(S_{\tau_1+i}-S_{\tau_1}\right)_{i\geq0})\right]\\
&=
\frac{1}{p}\mathbf{E}\left[f(\Gamma_1)h(\Gamma_1)\right]
\bar{\mathbf{E}}\left[g\left((S_j)_{j\geq0}\right)\right]
=
\frac{1}{p}\mathbf{E}\left[f(\Gamma_1)\mathbf{1}_{\mathcal{A}_0}\right]
\bar{\mathbf{E}}\left[g\left((S_j)_{j\geq0}\right)\right]
=
\bar{\mathbf{E}}\left[f(\Gamma_1)\right]
\bar{\mathbf{E}}\left[g\left((S_j)_{j\geq0}\right)\right].
\end{aligned}
$$
This proves \eqref{eq:3.8}.
\end{proof}

\medskip

For $r\geq1$, set
$$
\Pi_r:=
\left(\tau_{r+1}-\tau_r,\,S_{\tau_{r+1}}-S_{\tau_r},\,
\left(S_{\tau_r+j}-S_{\tau_r}\right)_{0\leq j<\tau_{r+1}-\tau_r}\right).
$$
\begin{pro}\label{pro:3.3}
The sequence
\begin{equation}\label{eq:3.9}
(\Pi_r)_{r\geq1}\quad\text{is i.i.d.}.
\end{equation}
Moreover, the first two coordinates of $\Pi_1$ are integrable, and
\begin{equation}\label{eq:3.10}
\alpha:=\mathbf{E}[\tau_2-\tau_1]=\frac{\mathbf{E}[\bar{\sigma}_1]}{p},
~~
\beta:=\mathbf{E}[S_{\tau_2}-S_{\tau_1}]=\frac{\mathbf{E}[\bar{Z}_1]}{p}.
\end{equation}
In particular, $\alpha,\beta\in(0,\infty)$ and
\begin{equation}\label{eq:3.11}
\frac{\tau_r}{r}\to\alpha,
~~~
\frac{S_{\tau_r}}{r}\to\beta,~~ \text{a.s.},
~~~
\frac{\beta}{\alpha}=\mu.
\end{equation}
\end{pro}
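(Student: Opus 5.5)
The plan is to derive the i.i.d. property from Lemma~\ref{lem:3.2} by induction on the number of regeneration blocks. The sums $\alpha$ and $\beta$ will then come from a comparison between regeneration epochs and ladder epochs.

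\textbf{Step 1: the post-$\tau_1$ process regenerates.} By \eqref{eq:3.7}, the law of the post-$\tau_1$ process $(S_{\tau_1+i}-S_{\tau_1})_{i\ge0}$ is $\bar{\mathbf P}$, and it is independent of $\Gamma_1$. I next observe that on $\mathcal A_0$ the regeneration times of the shifted path are exactly the differences $\tau_{r+1}-\tau_1$. Indeed, after $\tau_1$ the walk stays $\ge S_{\tau_1}$ and every earlier value is $<S_{\tau_1}$. So a time $n>\tau_1$ is a strict ladder time of the whole path if and only if $n-\tau_1$ is a strict ladder time of the shifted path. The "never undershoot" condition is also shift invariant.

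Consequently $\Pi_r=F_r\big((S_{\tau_1+i}-S_{\tau_1})_{i\ge0}\big)$, where $F_r$ maps a path to its $r$-th block $\Gamma_r$ (so $\Pi_1$ corresponds to the first regeneration block $\Gamma_1$ of the shifted path). Thus $(\Pi_r)_{r\ge1}$ under $\mathbf P$ has the law of $(\Gamma_1,\Pi_1,\Pi_2,\dots)$ under $\bar{\mathbf P}$.

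\textbf{Step 2: the induction.} Iterating \eqref{eq:3.8}, applied with $g$ a function of the blocks of the shifted path, gives the product formula
\[
\bar{\mathbf E}\Big[\prod_{r=1}^n f_r(\Gamma_r)\Big]=\prod_{r=1}^n\bar{\mathbf E}[f_r(\Gamma_1)],
\]
by induction on $n$. Combined with Step 1, this gives \eqref{eq:3.9}: the $\Pi_r$ are i.i.d. with common law $\mathcal L_{\bar{\mathbf P}}(\Gamma_1)$. One measurability point must be checked, namely that $\Gamma_r$ depends measurably on the path; this is routine.

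\textbf{Step 3: the means and the limits.} I will get $\alpha$ and $\beta$ from the strong law of large numbers plus ladder-epoch counting rather than by computing $\bar{\mathbf E}[\tau_1]$ directly. Since $\tau_r=\sigma_{a_r}$, I have
\[
\frac{\tau_r}{r}=\frac{\sigma_{a_r}}{a_r}\cdot\frac{a_r}{r}.
\]
Lemma~\ref{lem:3.1} and the SLLN give $\sigma_k/k\to\mathbf E[\bar\sigma_1]$, and \eqref{eq:3.5} gives $a_r/r\to 1/p$. Hence $\tau_r/r\to \mathbf E[\bar\sigma_1]/p$ a.s. Similarly $S_{\tau_r}=Z_{a_r}$ yields $S_{\tau_r}/r\to\mathbf E[\bar Z_1]/p$ a.s.

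On the other hand, $\tau_r=\tau_1+\sum_{j<r}(\tau_{j+1}-\tau_j)$ is a sum of i.i.d. nonnegative terms plus a finite $\tau_1$. The nonnegative-variable SLLN (valid even when the mean is infinite) then says $\tau_r/r\to\mathbf E[\tau_2-\tau_1]\in[0,\infty]$. Comparing the two limits shows that $\alpha$ is finite and equal to $\mathbf E[\bar\sigma_1]/p$; the same argument gives $\beta$.

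Positivity holds because $\tau_2-\tau_1\ge1$ and $S_{\tau_2}-S_{\tau_1}\ge1$. Finally, \eqref{eq:3.3} gives $\beta/\alpha=\mathbf E[\bar Z_1]/\mathbf E[\bar\sigma_1]=\mu$.

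The main obstacle I expect is Step 1. It requires verifying carefully that the regeneration structure of the full path coincides, after $\tau_1$, with that of the shifted path, so that \eqref{eq:3.8} can be iterated.
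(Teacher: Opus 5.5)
Your proposal is correct and follows the paper's proof in substance. The i.i.d. property comes from the shift identity (the paper's (3.12)--(3.13)) combined with (3.7) and iterated use of (3.8), and $\alpha,\beta$ are identified by comparing $\tau_r=\sigma_{a_r}$, $S_{\tau_r}=Z_{a_r}$ with the SLLN for the regeneration increments. The differences are cosmetic: you run the induction as a product formula under $\bar{\mathbf P}$, which avoids the paper's monotone class step since $\mathcal B$ is countable, and you cite the SLLN for nonnegative variables with possibly infinite mean where the paper writes out the equivalent truncation argument.
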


\begin{proof}[\bf Proof]
For a path $\mathbf{x}=(x_j)_{j\geq0}\in\mathcal{C},$
define its first regeneration epoch $\tau_1(\mathbf{x})$ by the same formula as that for $(S_j)_{j\geq0}$. On $\{\tau_1(\mathbf{x})<\infty\}$, set
$$
\Psi(\mathbf{x})
:=
\left(\left(\tau_1(\mathbf{x}),x_{\tau_1(\mathbf{x})}-x_0,\left(x_j-x_0\right)_{0\leq j<\tau_1(\mathbf{x})}\right), \left(x_{\tau_1(\mathbf{x})+j}-x_{\tau_1(\mathbf{x})}\right)_{j\geq0}\right).
$$
On $\{\tau_1(\mathbf{x})=\infty\}$, assign $\Phi(\mathbf{x})$ an arbitrary fixed value in $\mathcal{B}\times\mathcal{C}$.
The map $\Psi: (\mathcal{C}, \sigma(\mathcal{C})) \to(\mathcal{B}\times\mathcal{C}, \sigma(\mathcal{B})\otimes \sigma(\mathcal{C}))$ is measurable.

For the random walk $(S_n)_{n\geq0}$,
\begin{equation}\label{eq:3.12}
\Psi\bigl((S_j)_{j\geq0}\bigr)
=\left(\Gamma_1,\left(S_{\tau_1+j}-S_{\tau_1}\right)_{j\geq0}\right),
~~\mathbf{P}\text{-a.s.}.
\end{equation}
The time shift and spatial translation invariance, together with
$S_n<S_{\tau_r}$ for $0\leq n<\tau_r$ and $S_n\geq S_{\tau_r}$ for $n\geq\tau_r$,
shows that $\tau_{r+1}-\tau_r$ is the first regeneration epoch of
$\left(S_{\tau_r+j}-S_{\tau_r}\right)_{j\geq0}$. Therefore,  for any $r\geq 1$,
\begin{equation}\label{eq:3.13}
\Psi
\left(\left(S_{\tau_r+j}-S_{\tau_r}\right)_{j\geq0}\right)
=
\left(\Pi_r, \left(S_{\tau_{r+1}+j}-S_{\tau_{r+1}}\right)_{j\geq0} \right),~~\mathbf{P}\text{-a.s.}.
\end{equation}

For $r\geq1$, let $\mathcal{D}_r:=\sigma(\Pi_1,\ldots,\Pi_r),$
with $\mathcal{D}_0$ the trivial $\sigma$-field. We prove by induction that, for every $r\geq1$,
\begin{equation}\label{eq:3.14}
\mathbf{E}\left[ Hh\left( \left(S_{\tau_r+j}-S_{\tau_r}\right)_{j\geq0}\right) \right]
=
\mathbf{E}[H]\, \bar{\mathbf{E}}\left[ h\bigl((S_j)_{j\geq0}\bigr)\right]
\end{equation}
for any bounded $\mathcal{D}_{r-1}$-measurable random variable $H$ and any bounded measurable function $h:\mathcal{C}\to\mathbf{R}$.

For $r=1$, $\mathcal{D}_0$ is trivial. Taking $f\equiv1$ in
\eqref{eq:3.7} gives
$\mathbf{E}\left[h\left(\left(S_{\tau_1+j}-S_{\tau_1}\right)_{j\geq0}\right) \right]
=
\bar{\mathbf{E}}\left[h\bigl((S_j)_{j\geq0}\bigr)\right],$
which proves \eqref{eq:3.14} for $r=1$. Suppose that \eqref{eq:3.14} holds for some $r\geq1$. Let $H$ be a bounded $\mathcal{D}_{r-1}$-measurable random variable, and let
$f:\mathcal{B}\to\mathbf{R}$ and $g:\mathcal{C}\to\mathbf{R}$ be bounded measurable functions.
 Write $\Psi=(\Psi_1,\Psi_2)$ and define
$$
h(\mathbf{x})
:=
f\left(\Psi_1(\mathbf{x})\right)g\left(\Psi_2(\mathbf{x})\right), ~~\mathbf{x}\in\mathcal{C}.
$$
 Since $\Psi$ is measurable, 
$h$ is bounded and measurable. Hence, by \eqref{eq:3.12}, \eqref{eq:3.13}, the induction hypothesis, and \eqref{eq:3.8},  we have
\begin{equation}\label{eq:3.15}
\begin{aligned}
\mathbf{E}\left[ Hf(\Pi_r)g\left(\left(S_{\tau_{r+1}+j}-S_{\tau_{r+1}}\right)_{j\geq0}\right)\right]
&=
\mathbf{E}\left[Hh\left(\left(S_{\tau_r+j}-S_{\tau_r}\right)_{j\geq0}\right)\right]
=
\mathbf{E}[H]\,\bar{\mathbf{E}}\left[h\bigl((S_j)_{j\geq0}\bigr)\right]\\
&=
\mathbf{E}[H]\,\bar{\mathbf{E}}\left[f(\Gamma_1) g\left(\left(S_{\tau_1+j}-S_{\tau_1}\right)_{j\geq0}\right)\right]\\
&=
\mathbf{E}[H]\,\bar{\mathbf{E}}\left[f(\Gamma_1)\right] \bar{\mathbf{E}}\left[g\bigl((S_j)_{j\geq0}\bigr)\right].
\end{aligned}
\end{equation}
Taking $g\equiv1$ in \eqref{eq:3.15} gives
\begin{equation}\label{eq:3.16}
\mathbf{E}\left[Hf(\Pi_r)\right]
=
\mathbf{E}[H]\,\bar{\mathbf{E}}\left[f(\Gamma_1)\right].
\end{equation}
Combining \eqref{eq:3.15} and \eqref{eq:3.16}, we obtain
\begin{equation}\label{eq:3.17}
\mathbf{E}\left[Hf(\Pi_r) g\left(\left(S_{\tau_{r+1}+j}-S_{\tau_{r+1}}\right)_{j\geq0}\right)\right]
=
\mathbf{E}\left[Hf(\Pi_r)\right] \bar{\mathbf{E}}\left[g\bigl((S_j)_{j\geq0}\bigr)\right].
\end{equation}

Fix a bounded measurable function $g:\mathcal{C}\to\mathbf{R}$, and let $\mathcal{H}_g$ be the class of bounded $\mathcal{D}_r$-measurable random variables $K$ satisfying
$$
\mathbf{E}\left[K g\left(\left(S_{\tau_{r+1}+j}-S_{\tau_{r+1}}\right)_{j\geq0}\right)\right]
=
\mathbf{E}[K]\,\bar{\mathbf{E}}\left[g\bigl((S_j)_{j\geq0}\bigr)\right].
$$
By \eqref{eq:3.17}, $\mathcal{H}_g$ contains all random variables $Hf(\Pi_r)$, where $H$ is bounded and $\mathcal{D}_{r-1}$-measurable and $f$ is bounded and measurable.
These random variables generate
$\mathcal{D}_r=\sigma(\mathcal{D}_{r-1},\Pi_r).$
Since $\mathcal{H}_g$ is a vector space closed under bounded monotone convergence, the functional monotone class theorem implies that it contains every bounded $\mathcal{D}_r$-measurable random variable. Thus \eqref{eq:3.14} holds with $r+1$ in place of $r$. This completes the induction.

Taking $H\equiv1$ in \eqref{eq:3.16}, we obtain
\begin{equation}\label{eq:3.18}
\mathcal{L}_{\mathbf{P}}(\Pi_r)=\mathcal{L}_{\bar{\mathbf{P}}}(\Gamma_1),~~ \forall r\geq1.
\end{equation}
Substituting \eqref{eq:3.18} into \eqref{eq:3.16} gives
$\mathbf{E}\left[Hf(\Pi_r)\right]=\mathbf{E}[H]\mathbf{E}\left[f(\Pi_r)\right].$
Hence $\Pi_r$ is independent of $\mathcal{D}_{r-1}$ for every $r\geq1$. Let $m\geq1$, and let $f_1,\ldots,f_m:\mathcal{B}\to\mathbf{R}$ be bounded measurable functions. Since $\prod_{r=1}^{m-1}f_r(\Pi_r)$  is $\mathcal{D}_{m-1}$-measurable, \eqref{eq:3.16} and \eqref{eq:3.18} give
$$
\mathbf{E}\left[\prod_{r=1}^{m}f_r(\Pi_r)\right]
=
\mathbf{E}\left[\prod_{r=1}^{m-1}f_r(\Pi_r)\right] \bar{\mathbf{E}}\left[f_m(\Gamma_1)\right]
=
\prod_{r=1}^{m}\bar{\mathbf{E}}\left[f_r(\Gamma_1)\right]
=
\prod_{r=1}^{m}\mathbf{E}\left[f_r(\Pi_r)\right].
$$
Therefore, $(\Pi_r)_{r\geq1}$ is independent and identically distributed, which proves \eqref{eq:3.9}.

Recall that $\tau_r=\sigma_{a_r}, S_{\tau_r}=Z_{a_r}$, $\bar{\sigma}_k=\sigma_k-\sigma_{k-1}$ and $\bar{Z}_k=Z_k-Z_{k-1}$. By \eqref{eq:3.5}, \eqref{eq:3.2}, and the strong law of large numbers, we have
\begin{equation}\label{eq:3.19}
\frac{1}{r}\sum_{j=1}^r(\tau_{j+1}-\tau_j)
=
\frac{1}{r}\sum_{k=a_1+1}^{a_{r+1}}\bar{\sigma}_k
=
\frac{a_{r+1}}{r}\left(\frac{1}{a_{r+1}}\sum_{k=1}^{a_{r+1}}\bar{\sigma}_k\right)
-\frac{1}{r}\sum_{k=1}^{a_1}\bar{\sigma}_k
\to\frac{\mathbf{E}[\bar{\sigma}_1]}{p}~~\text{a.s.},
\end{equation}
and
\begin{equation}\label{eq:3.20}
\frac{1}{r}\sum_{j=1}^r\left(S_{\tau_{j+1}}-S_{\tau_j}\right)
=
\frac{1}{r}\sum_{k=a_1+1}^{a_{r+1}}\bar{Z}_k
=
\frac{a_{r+1}}{r}\left(\frac{1}{a_{r+1}}\sum_{k=1}^{a_{r+1}}\bar{Z}_k\right)
-\frac{1}{r}\sum_{k=1}^{a_1}\bar{Z}_k
\to\frac{\mathbf{E}[\bar{Z}_1]}{p}~~\text{a.s.}.
\end{equation}
Fix $M\geq1$. By \eqref{eq:3.9} and the strong law of large numbers, we get that almost surely,
$$
\begin{aligned}
\frac{1}{r}\sum_{j=1}^r [(\tau_{j+1}-\tau_j)\wedge M]
\to \mathbf{E}[(\tau_2-\tau_1)\wedge M],~~
\frac{1}{r}\sum_{j=1}^r [(S_{\tau_{j+1}}-S_{\tau_j})\wedge M]
\to \mathbf{E}[(S_{\tau_2}-S_{\tau_1})\wedge M].
\end{aligned}
$$
Since both coordinates are nonnegative, comparison with \eqref{eq:3.19} and \eqref{eq:3.20} gives
$$
\mathbf{E}[(\tau_2-\tau_1)\wedge M] \leq\frac{\mathbf{E}[\bar{\sigma}_1]}{p},
~~
\mathbf{E}[(S_{\tau_2}-S_{\tau_1})\wedge M] \leq\frac{\mathbf{E}[\bar{Z}_1]}{p}.
$$
Letting $M\to\infty$ and applying the monotone convergence theorem, we conclude that $\tau_2-\tau_1$ and $S_{\tau_2}-S_{\tau_1}$ are integrable.

The strong law of large numbers for the i.i.d. sequence $(\Pi_r)_{r\geq1}$ now yields
$$
\frac{1}{r}\sum_{j=1}^r (\tau_{j+1}-\tau_j)
\to \mathbf{E}[\tau_2-\tau_1] ,~~
\frac{1}{r}\sum_{j=1}^r (S_{\tau_{j+1}}-S_{\tau_j})
\to \mathbf{E}[S_{\tau_2}-S_{\tau_1}],~~\text{a.s.}.
$$
Comparing this limit with \eqref{eq:3.19}, by  \eqref{eq:3.10} we obtain
$\mathbf{E}[\tau_2-\tau_1]=\frac{\mathbf{E}[\bar{\sigma}_1]}{p},
~\mathbf{E}[S_{\tau_2}-S_{\tau_1}]=\frac{\mathbf{E}[\bar{Z}_1]}{p}.$
In particular, $\alpha,\beta\in(0,\infty)$. Finally,
$$
\frac{\tau_r}{r}
=
\frac{\tau_1}{r}+\frac{1}{r}\sum_{j=1}^{r-1}(\tau_{j+1}-\tau_j)\to\alpha,
~~~
\frac{S_{\tau_r}}{r}
=
\frac{S_{\tau_1}}{r}+\frac{1}{r}\sum_{j=1}^{r-1}(S_{\tau_{j+1}}-S_{\tau_j})
\to\beta,~~\text{a.s.}.
$$
Moreover, by \eqref{eq:3.3} and \eqref{eq:3.10},
$\frac{\beta}{\alpha}
=
\frac{\mathbf{E}[\bar{Z}_1]}{\mathbf{E}[\bar{\sigma}_1]}=\mu.$
This proves \eqref{eq:3.11}. The proof is complete.
\end{proof}

\subsection{Block local time}

For $r\geq1$, define $\mathcal{E}_r:=[S_{\tau_r},S_{\tau_{r+1}})\cap\mathbb{Z}.$
For $x\in\mathcal{E}_r$, set
$$
\zeta_r(x):=\sum_{j=\tau_r}^{\tau_{r+1}-1}\mathbf{1}_{\{S_j=x\}},
~~
\zeta_r^*:=\max_{x\in\mathcal{E}_r}\zeta_r(x)
~~\text{and}~~
e_r:=\min\left\{x\in\mathcal{E}_r:\zeta_r(x)=\zeta_r^*\right\}.
$$

\begin{lem}\label{lem:3.4}
For any $r\geq1$ and $x\in\mathcal{E}_r$, $\xi(x,\infty)=\zeta_r(x).$ Moreover,
\begin{equation*}
\left(\zeta_r^*,e_r-S_{\tau_r}\right)_{r\geq1}\quad\text{is i.i.d.}.
\end{equation*}
\end{lem}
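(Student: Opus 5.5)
The plan has two parts: a pathwise identity for the local times, followed by a measurability argument that reduces the i.i.d.\ claim to Proposition~\ref{pro:3.3}.

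\emph{Local time identity.} Fix $r\geq1$ and $x\in\mathcal{E}_r$, so $S_{\tau_r}\le x<S_{\tau_{r+1}}$. I will show that the walk visits $x$ only at times in $[\tau_r,\tau_{r+1})$; then $\xi(x,\infty)=\zeta_r(x)$ is immediate.
\begin{itemize}
\item For $n<\tau_r$, the regeneration property (recorded after \eqref{eq:3.6}) gives $S_n<S_{\tau_r}\le x$.
\item For $n\ge\tau_{r+1}$, the same property at $\tau_{r+1}$ gives $S_n\ge S_{\tau_{r+1}}>x$.
\end{itemize}

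\emph{Reduction to the blocks $\Pi_r$.} Observe that
\[
\zeta_r(x)=\sum_{0\le j<\tau_{r+1}-\tau_r}\mathbf 1_{\{S_{\tau_r+j}-S_{\tau_r}=x-S_{\tau_r}\}}.
\]
So, writing $y=x-S_{\tau_r}$, the function $y\mapsto\zeta_r(S_{\tau_r}+y)$ on $[0,S_{\tau_{r+1}}-S_{\tau_r})\cap\mathbb Z$ is a deterministic function of $\Pi_r$. The three ingredients are:
\begin{itemize}
\item the length $\tau_{r+1}-\tau_r$, which is the first coordinate of $\Pi_r$;
\item the range $[0,S_{\tau_{r+1}}-S_{\tau_r})$, which is determined by the second coordinate;
\item the relative path $(S_{\tau_r+j}-S_{\tau_r})_{j}$, which is the third coordinate.
\end{itemize}

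Hence $\zeta_r^*=\max_{0\le y<S_{\tau_{r+1}}-S_{\tau_r}}\zeta_r(S_{\tau_r}+y)$. Likewise, $e_r-S_{\tau_r}$ is the smallest such $y$ attaining this maximum. So there is one fixed map $\Xi:\mathcal B\to\mathbb Z_+\times\mathbb Z$, the same for every $r$, with $(\zeta_r^*,e_r-S_{\tau_r})=\Xi(\Pi_r)$.

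Since $\mathcal B$ carries the discrete $\sigma$-field, $\Xi$ is automatically measurable. The set $\mathcal E_r$ is nonempty because $S_{\tau_{r+1}}>S_{\tau_r}$, and finite, so the max and min are well defined. Applying the same fixed map to the i.i.d.\ sequence $(\Pi_r)_{r\ge1}$ from \eqref{eq:3.9} yields an i.i.d.\ sequence.

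\emph{Main obstacle.} The only delicate point is justifying the two strict inequalities in the first step, together with the fact that $\Xi$ does not depend on $r$. The inequalities come straight from the definition \eqref{eq:3.6}: a strict ladder epoch for $n<\tau_r$, and no undershoot after $\tau_{r+1}$. The independence of $\Xi$ from $r$ holds because everything is expressed through relative coordinates. I expect no further difficulty, since this relies only on a.s.\ finiteness of all the $\tau_r$, which is established above.
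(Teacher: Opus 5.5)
Your proposal is correct and follows the paper's proof essentially verbatim: the same pathwise identity from $S_n<S_{\tau_r}$ for $n<\tau_r$ and $S_n\geq S_{\tau_{r+1}}$ for $n\geq\tau_{r+1}$. It then writes $(\zeta_r^*,e_r-S_{\tau_r})$ as one fixed measurable function of $\Pi_r$ and invokes the i.i.d.\ property \eqref{eq:3.9}.
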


\begin{proof}[\bf Proof]
By the definitions of $\tau_r$ and  $\tau_{r+1}$ (cf. (\ref{eq:3.6})), we know that
$S_j<S_{\tau_r}, \forall\, 0\leq j<\tau_r$ and $S_j\geq S_{\tau_{r+1}}, \forall \, j\geq\tau_{r+1}.$
For any $x\in\mathcal{E}_r$, we have $S_{\tau_r}\leq x<S_{\tau_{r+1}}.$
Consequently,  if $x\in\mathcal{E}_r$, then for any $j<\tau_r$ or $j\geq\tau_{r+1}$, it holds that $\mathbf{1}_{\{S_j=x\}}=0,$ and thus
$$
\xi(x,\infty)
=
\sum_{j=\tau_r}^{\tau_{r+1}-1}\mathbf{1}_{\{S_j=x\}}
=
\zeta_r(x).
$$

By \eqref{eq:3.9}, $(\Pi_r)_{r\geq1}$ is i.i.d.. Furthermore,
$$
\zeta_r^*
=
\max_{0\leq y<S_{\tau_{r+1}}-S_{\tau_r}}
\sum_{j=0}^{\tau_{r+1}-\tau_r-1}\mathbf{1}_{\{S_{\tau_r+j}-S_{\tau_r}=y\}},
$$
and
$$
e_r-S_{\tau_r}
=
\min\left\{y~\left|~0\leq y<S_{\tau_{r+1}}-S_{\tau_r},
\sum_{j=0}^{\tau_{r+1}-\tau_r-1}\mathbf{1}_{\{S_{\tau_r+j}-S_{\tau_r}=y\}}
=\zeta_r^*\right.\right\}.
$$
Hence $\zeta_r^*$ and $e_r-S_{\tau_r}$ are two measurable functions of $\Pi_r$.
It follows that $\left(\zeta_r^*,e_r-S_{\tau_r}\right)_{r\geq1}$  is i.i.d.. The proof is complete.
\end{proof}

\medskip

Choose $a,b\in\mathbb{N}$ such that
$$\mathbf{P}(X_1=a)>0,~~\mathbf{P}(X_1=-b)>0.$$
Let
$$
d:=\gcd(a,b),~~u:=\frac{a}{d},~~v:=\frac{b}{d},
$$
and set
$$
\mathbf{y}
:=
(\underbrace{a,\ldots,a}_{v\text{ times}},\underbrace{-b,\ldots,-b}_{u\text{ times}})
=
(y_1,\ldots,y_t),
~~
t:=u+v.
$$
Define
$$
\eta:=\mathbf{P}\bigl((X_1,\ldots,X_t)=\mathbf{y}\bigr)>0,
~~
h_0:=0,
~~
h_q:=\sum_{j=1}^q y_j,\quad 1\leq q\leq t.
$$
Let
$$
\mathcal{H}_m
:=
\left\{\max_{x\in\mathbb{Z}}\sum_{j=0}^{\tau_1-1}\mathbf{1}_{\{S_j=x\}}\geq m \right\},
~~m\geq1.
$$

\begin{lem}\label{lem:3.5}
For every $m\geq1$,
\begin{equation}\label{eq:3.21}
\mathbf{P}(\zeta_1^*\geq m)=\bar{\mathbf{P}}(\mathcal{H}_m)>0.
\end{equation}
Moreover,
\begin{equation}\label{eq:3.22}
\mathbf{P}(\zeta_1^*\geq m+1)\geq\eta\mathbf{P}(\zeta_1^*\geq m).
\end{equation}
\end{lem}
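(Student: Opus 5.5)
The identity in \eqref{eq:3.21} is distributional, and the inequality \eqref{eq:3.22} comes from inserting a zero-sum excursion. For the identity, recall from \eqref{eq:3.18} that $\mathcal{L}_{\mathbf{P}}(\Pi_1)=\mathcal{L}_{\bar{\mathbf{P}}}(\Gamma_1)$. By the formula for $\zeta_1^*$ in the proof of Lemma~\ref{lem:3.4}, $\zeta_1^*=\phi(\Pi_1)$, where $\phi$ gives the maximal occupation count of the path $(x_j)_{0\le j<n}$ over the window $[0,x)$ for a block $(n,x,(x_j)_{0\le j<n})$. Under $\bar{\mathbf{P}}$ all positions $S_j$, $j<\tau_1$, lie in $[0,S_{\tau_1})$: they are $\ge0$ on $\mathcal{A}_0$ and $<S_{\tau_1}$ by definition of $\tau_1$. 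So $\phi(\Gamma_1)=\max_x\sum_{j<\tau_1}\mathbf{1}_{\{S_j=x\}}$ holds $\bar{\mathbf{P}}$-a.s. This gives $\mathbf{P}(\zeta_1^*\ge m)=\bar{\mathbf{P}}(\mathcal{H}_m)$. Positivity will follow from \eqref{eq:3.22} by induction, since $\mathbf{P}(\zeta_1^*\ge1)=1$ (the block is nonempty). This gives $\mathbf{P}(\zeta_1^*\ge m)\ge\eta^{m-1}>0$.

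For \eqref{eq:3.22}, I will work with the unconditioned law and $\mathcal{H}_m$, using $\bar{\mathbf{P}}(\mathcal{H}_m)=\mathbf{P}(\mathcal{H}_m\cap\mathcal{A}_0)/p$. The vector $\mathbf{y}$ has $h_t=va-ub=0$, all partial sums satisfy $h_q\ge0$, and $h_q>0$ for $0<q<t$. The loop $\mathbf{y}$ therefore starts and ends at the same site, stays weakly above it, and returns to it only at time $t$. Define $\Theta$ on paths by prepending the loop: the new path does $\mathbf{y}$ first and then follows the old path. Under $\mathbf{P}$, the event $\{(X_1,\dots,X_t)=\mathbf{y}\}\cap\{(S_{t+j}-S_t)_{j\ge0}\in G\}$ has probability $\eta\,\mathbf{P}(G)$ by independence of increments. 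I will apply this with $G=\mathcal{H}_m\cap\mathcal{A}_0$.

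The key step, and the expected obstacle, is to show that for a path $\mathbf{x}\in\mathcal{H}_m\cap\mathcal{A}_0$, the concatenated path $\Theta\mathbf{x}$ lies in $\mathcal{A}_0$ with first regeneration epoch $\tau_1(\Theta\mathbf{x})=t+\tau_1(\mathbf{x})$, and has visit count at $0$ increased by one. Membership in $\mathcal{A}_0$ is clear because the loop stays $\ge0$. For the regeneration epoch, a time $q\le t$ cannot be a regeneration epoch: the path returns to $0$ at time $t$, and $0$ lies below any strict ladder height reached during the loop (the heights $h_q>0$). After time $t$, the ladder condition compares against the running maximum, which now includes $\max_q h_q$. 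A time $t+n$ is a regeneration epoch for $\Theta\mathbf{x}$ if it is a strict ladder time exceeding $\max h$ with no later undershoot. The first regeneration epoch of $\mathbf{x}$ might have height below $\max h$, and this is the genuine difficulty.

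To repair this, I would first try to prepend the loop at the maximizing site rather than at the origin. Let $x^*$ be the site with $m$ visits before $\tau_1$, and let $s$ be its first visit time. Insert the loop at time $s$, in a path-decomposition at the stopping time $\hat\tau_{x^*}$. The probability factor $\eta$ still comes out by the strong Markov property. The inserted loop stays in $[x^*,x^*+\max h]$. The question is whether the resulting excursion can create a new ladder time that regenerates earlier. It cannot be a regeneration epoch, because the path returns to $x^*$, which lies below it. It also cannot alter later ladder times except by raising the running maximum. To rule out this last effect, I would choose $a,b$ (hence $\mathbf{y}$) and argue via a comparison on the event that $x^*+\max h<S_{\tau_1}$. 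If that fails, I would fall back to bounding with a cleverer permutation of $\mathbf{y}$: use the cyclic rotation with all partial sums $\le0$, as in the proof of Lemma~\ref{pro:2.3}, inserted at $x^*$. A nonpositive loop never raises the running maximum, so the ladder times and $\tau_1$ are simply shifted by $t$, positions stay $\ge0$ as long as the loop stays above $0$, and the count at $x^*$ becomes $m+1$. This is the version I would actually pursue, handling the nonnegativity near $x^*=0$ separately.
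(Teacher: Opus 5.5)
\textbf{There is a genuine gap in your proof of \eqref{eq:3.22}.} Your derivation of the identity in \eqref{eq:3.21} from \eqref{eq:3.18} is fine, and obtaining positivity from \eqref{eq:3.22} by induction is legitimate. But \eqref{eq:3.22} itself is never proved, and the obstacle you isolate is not a real one.

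Prepending $\mathbf{y}$ at time $0$ adds visits at $0$ and at the sites $h_q$, not at the maximizing site. So $\Theta$ does not map $\mathcal{H}_m\cap\mathcal{A}_0$ into $\mathcal{H}_{m+1}$ at all.

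Once you move the word to the maximizing site, you try to \emph{rule out} a rise of the running maximum. You propose doing this by a choice of $a,b$, which are fixed before the lemma, or by restricting to $\{x^*+\max_q h_q<S_{\tau_1}\}$, which loses mass and hence the constant $\eta$. The missing observation is that this effect is harmless. Suppose $\mathbf{y}$ is inserted at a time $n<\tau_1$ at the site $S_n$.
\begin{itemize}
\item No time $k\le n$ becomes a regeneration epoch, because the old undershoot of $S_k$ still occurs later in the new path.
\item No time in $(n,n+t)$ is one, because the path is back at $S_n$ below it at time $n+t$. Time $n+t$ is not one either, since $S_n$ is not a strict record then.
\item For later times, a larger running maximum can only destroy strict ladder epochs, never create them, and the future after such a time is unchanged.
\end{itemize}
Hence the new $\tau_1$ exceeds $n+t$ (indeed it is at least the old $\tau_1$ plus $t$). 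That is all $\mathcal{H}_{m+1}$ requires, since $S_n$ then has $m+1$ visits before the new $\tau_1$.

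The second missing ingredient is the choice of insertion time. Your $x^*$ is defined through the path up to $\tau_1$, so $\hat\tau_{x^*}$ is not a stopping time. More importantly, to sum the factor $\eta$ over insertion times you need the images of the insertion maps to be disjoint, and you never check this. The loop sites also gain visits, so the first visit to \emph{the} maximizing site need not be recoverable from the new path.

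The paper inserts at $n=T_m$, the first time some site has local time $m$, and decomposes over the deterministic values $\{T_m=n\}$. This gives three things:
\begin{itemize}
\item For fixed $n$, independence of the increments gives exactly the factor $\eta$.
\item $\mathcal{A}_0$ is preserved, because the word stays $\ge S_n\ge0$.
\item $T_m$ is unchanged by the insertion, so the sets $\phi_n(H_n)\subseteq\mathcal{H}_{m+1}\cap\{T_m=n\}$ are disjoint.
\end{itemize}

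Finally, the nonpositive rotation you say you would actually pursue leaves $\mathcal{A}_0$ whenever the insertion site lies below the depth $ub$ of the rotated word. This has positive $\bar{\mathbf{P}}$-probability on $\mathcal{H}_m$; for example, paths that first perform $\mathbf{y}$ from $0$ a total of $m-1$ times have $S_{T_m}=0$. So \emph{handling it separately} would force you back to the nonnegative word on that event, i.e.\ to the argument above.
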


\begin{proof}[\bf Proof]
By Lemma \ref{lem:3.2},  $\left(S_{\tau_1+i}-S_{\tau_1}\right)_{i\geq0}$ under $\mathbf{P}$ has the same distribution as $(S_j)_{j\geq0}$ under $\bar{\mathbf{P}}$. Moreover, $\tau_2-\tau_1$ is obtained from $\left(S_{\tau_1+i}-S_{\tau_1}\right)_{i\geq0}$ through the same measurable construction by which $\tau_1$ is obtained from $(S_j)_{j\geq0}$. Hence
\begin{equation}\label{eq:3.23}
\mathcal{L}_{\mathbf{P}}(\zeta_1^*)
=
\mathcal{L}_{\mathbf{P}}\left(\max_{x\in\mathbb{Z}} \sum_{j=0}^{\tau_2-\tau_1-1}\mathbf{1}_{\{S_{\tau_1+j}-S_{\tau_1}=x\}}\right)
=
\mathcal{L}_{\bar{\mathbf{P}}} \left(\max_{x\in\mathbb{Z}}\sum_{j=0}^{\tau_1-1}\mathbf{1}_{\{S_j=x\}}\right).
\end{equation}
Therefore, for all $m\geq1$,
$\mathbf{P}(\zeta_1^*\geq m)=\bar{\mathbf{P}}(\mathcal{H}_m).$

Recall that $\mathcal{A}_0$ is defined by \eqref{3.0}.
For $m\geq1$, let
$$
A_m
:=\left\{(X_{(q-1)t+1},\ldots,X_{qt})=\mathbf{y}, ~1\leq q\leq m-1\right\}.
$$
On $A_m$,
$S_{qt}=0,$ and $S_{(q-1)t+j}=h_j>0,  \forall \, 0\leq q\leq m-2,~ \forall \, 1\leq j<t.$
Hence the origin is visited at the times $0,t,\ldots,(m-1)t$. In particular, no regeneration epoch can occur before $(m-1)t$.
Consequently,
$A_m\cap\mathcal{A}_0\subseteq\mathcal{H}_m\cap\mathcal{A}_0.$
Since $S_{(m-1)t}=0$ on $A_m$,  by the Markov property and the  independence of the increments, we have
\begin{equation*}
\bar{\mathbf{P}}(\mathcal{H}_m)
=
\frac{\mathbf{P}(\mathcal{H}_m\cap\mathcal{A}_0)}{p}
\geq
\frac{\mathbf{P}(A_m\cap\mathcal{A}_0)}{p}
=
\frac{\mathbf{P}(A_m)\mathbf{P}(\mathcal{A}_0)}{p}
=
\mathbf{P}(A_m)
=
\eta^{m-1}>0.
\end{equation*}
This proves \eqref{eq:3.21}.

Fix $m\geq1$, and define
$$
T_m:=\inf\{n\geq0:\xi^*(n)=m\},~~H_n:=\mathcal{H}_m\cap\{T_m=n\}.
$$
Since $T_m<\tau_1$ on $\mathcal{H}_m$,
\begin{equation}\label{eq:3.24}
\mathcal{H}_m= \bigcup_{n\geq 0}H_n.
\end{equation}

For $n\geq0$, define
\begin{align}\label{eq:3.24-a}
\phi_n(x_1,x_2,\ldots):=(x_1,\ldots,x_n,y_1,\ldots,y_t,x_{n+1},x_{n+2},\ldots).
\end{align}
 For any $\mathbf{x}:=(x_1,x_2,\ldots)\in H_n$, we use $\omega_{\mathbf{x}}$ to denote the
path associated with $\mathbf{x}$. Then $S_0(\omega_{\mathbf{x}})=0, S_n(\omega_{\mathbf{x}})=\sum_{k=1}^nx_i$ for any $n\geq 1$.  By the definition of $H_n$, we know that $T_m(\omega_{\mathbf{x}})=n$ and $\xi^*(n)(\omega_{\mathbf{x}})=m=\xi(S_n(\omega_{\mathbf{x}}), n)(\omega_{\mathbf{x}})$, which together with (\ref{eq:3.24-a}) implies that $T_m(\omega_{\phi_n(\mathbf{x})})=n$.
Since
$\sum_{i=1}^t y_i=0$ and $\sum_{i=1}^j y_i\neq0$ for $1\leq j<t$, we know that for any $k\in \{n+1,n+2,\ldots,n+t\}$, $S_k(\omega_{\phi_n(\mathbf{x})})$ visits $S_n(\omega_{\phi_n(\mathbf{x})})$ only at the time $n+t$ and increasing the local time of the site $S_n(\omega_{\phi_n(\mathbf{x})})$ to $m+1$.
  By the definition of $H_n$, we know that $\tau_1(\omega_{\mathbf{x}})>n$. Then by the definitions of $\phi_n$ and $\tau_1$ (cf. (\ref{eq:3.24-a}) and (\ref{eq:3.6})), we have $\tau_1(\omega_{\phi_n(\mathbf{x}})>n+t$. Hence $\phi_n(\mathbf{x})\in \mathcal{H}_{m+1}\cap\{T_m=n\}$ and thus
\begin{equation}\label{eq:3.25}
\phi_n(H_n)\subseteq \mathcal{H}_{m+1}\cap\{T_m=n\}.
\end{equation}

Let $\Delta_n$  denote the transform which deletes the increments in positions $n+1,\ldots,n+t$. Since the inserted word stays above its initial level and has zero total displacement,  we have
$\mathbf{1}_{\mathcal{A}_0}(\omega)=\mathbf{1}_{\mathcal{A}_0}(\Delta_n\omega)$
on $\{(X_{n+1},\ldots,X_{n+t})=\mathbf{y}\}$. Hence, by the fact that $\{X_i\}_{i\geq 1}$ is i.i.d., we have
\begin{equation}\label{eq:3.26}
\begin{aligned}
&~~~~\bar{\mathbf{P}}(\phi_n(H_n))\\
&=
\frac{1}{p}\mathbf{P}\left(\phi_n(H_n)\cap\mathcal{A}_0\right)
=
\frac{1}{p}\mathbf{P}\left(\left\{(X_{n+1},\ldots,X_{n+t})=\mathbf{y},\ \Delta_n\omega\in H_n\right\}\cap\mathcal{A}_0\right)\\
&=
\frac{1}{p}\mathbf{P}\left((X_{n+1},\ldots,X_{n+t})=\mathbf{y},\
\Delta_n\omega\in H_n\cap\mathcal{A}_0\right)
=
\frac{1}{p}
\mathbf{E}\left[\mathbf{1}_{\{(X_{n+1},\ldots,X_{n+t})=\mathbf{y}\}}
\mathbf{1}_{\{\Delta_n\omega\in H_n\cap\mathcal{A}_0\}}\right]\\
&=
\frac{1}{p}\mathbf{E}\left[\mathbf{1}_{\{\Delta_n\omega\in H_n\cap\mathcal{A}_0\}} \mathbf{E}\left[\mathbf{1}_{\{(X_{n+1},\ldots,X_{n+t})=\mathbf{y}\}}
\,\middle|\,\sigma(X_1,\ldots,X_n,X_{n+t+1},X_{n+t+2},\ldots)\right] \right]\\
&=\frac{\eta}{p}\mathbf{P}\left(\Delta_n\omega\in H_n\cap\mathcal{A}_0\right)
=\frac{\eta}{p}\mathbf{P}(H_n\cap\mathcal{A}_0)
=\eta\bar{\mathbf{P}}(H_n).
\end{aligned}
\end{equation}
Note that $\phi_n(H_n)$, $n\ge 0$, are disjoint because they have distinct values of $T_m$.
Combining  \eqref{eq:3.21}, \eqref{eq:3.24}, \eqref{eq:3.25}, and \eqref{eq:3.26}, we obtain
$$
\mathbf{P}(\zeta_1^*\geq m+1) =\bar{\mathbf{P}}(\mathcal{H}_{m+1})
\geq
\sum_{n\geq0}\bar{\mathbf{P}}(\phi_n(H_n))
=
\eta\sum_{n\geq0}\bar{\mathbf{P}}(H_n)
=
\eta\bar{\mathbf{P}}(\mathcal{H}_m)=\eta \mathbf{P}(\zeta_1^*\geq m),
$$
 i.e., \eqref{eq:3.22} holds. The proof is complete.
\end{proof}

\subsection{Proof of Theorem \ref{thm:1.4} }\label{sec:5}

\begin{proof}[\bf Proof.]
Recall that $F_n$ is an arbitrary favorite site at time $n$.
Since $F_n$ has been visited by time $n$, $F_n\leq\max_{0\leq j\leq n}S_j.$ By the strong law of large numbers, we have
\begin{equation}\label{eq:3.27}
\limsup_{n\to\infty}\frac{F_n}{\mu n}
\leq \lim_{n\to\infty}\frac{\max_{0\leq j\leq n}S_j}{\mu n}=1,~~\text{a.s.}.
\end{equation}

We next prove the reverse inequality. By Lemmas~\ref{lem:3.4} and~\ref{lem:3.5}, $(\zeta_r^*)_{r\geq1}$ is i.i.d. and $\mathbf{P}(\zeta_1^*\geq m)>0,\ \forall m\geq1.$
Thus, for all $m\geq1$, the second Borel-Cantelli lemma gives $\mathbf{P}(\zeta_r^*\geq m\ \mathrm{i.o.})=1.$
Taking the intersection over $m\geq1$, we obtain
\begin{equation}\label{eq:3.28}
\max_{1\leq k\leq R}\zeta_k^*\to\infty,~~\text{a.s.}.
\end{equation}
Moreover, since $\tau_1$ is finite, we have that $\max_{x\in\mathbb{Z}}\xi(x,\tau_1-1)<\infty$ a.s..
Consequently, almost surely there exist infinitely many $R$'s
such that
$\zeta_R^*>
\max\left\{\max_{x\in\mathbb{Z}}\xi(x,\tau_1-1),\max_{1\leq k<R}\zeta_k^*\right\}.$
For any such $R$, Lemma~\ref{lem:3.4} gives, for every $x<S_{\tau_R}$,
$$
\xi(x,\tau_{R+1}-1)
\leq
\max\left\{\max_{y\in\mathbb{Z}}\xi(y,\tau_1-1),\max_{1\leq k<R}\zeta_k^*\right\}
<
\zeta_R^*,
$$
whereas
$\max_{S_{\tau_R}\leq x<S_{\tau_{R+1}}}\xi(x,\tau_{R+1}-1)=\zeta_R^*.$
Furthermore,
$\xi(x,\tau_{R+1}-1)=0,~x\geq S_{\tau_{R+1}}.$
It follows that
\begin{equation}\label{eq:3.29}
\mathcal{K}(\tau_{R+1}-1)\subseteq[S_{\tau_R},S_{\tau_{R+1}})\cap\mathbb{Z}.
\end{equation}
Hence
$$S_{\tau_R}\leq F_{\tau_{R+1}-1}<S_{\tau_{R+1}}.$$

By \eqref{eq:3.11},  we get that $\frac{S_{\tau_R}}{R}\to\beta$
and
$
\frac{\tau_{R+1}-1}{R}
=
\frac{R+1}{R}\frac{\tau_{R+1}}{R+1}-\frac{1}{R}\to\alpha\  \text{a.s.}.
$
Since $\beta/\alpha=\mu$,
$$
\frac{S_{\tau_R}}{\mu(\tau_{R+1}-1)}
=
\frac{S_{\tau_R}/R}{\mu(\tau_{R+1}-1)/R}
\to\frac{\beta}{\mu\alpha}
=1~~\text{a.s.}.
$$
Choose an increasing sequence $(R_j)_{j\geq1}$ of such $R$'s, using \eqref{eq:3.29},
$$
\limsup_{n\to\infty}\frac{F_n}{\mu n}
\geq
\limsup_{j\to\infty}\frac{F_{\tau_{(R_j+1)}-1}}{\mu(\tau_{(R_j+1)}-1)}
\geq
\lim_{j\to\infty}\frac{S_{\tau_{R_j}}}{\mu(\tau_{(R_j+1)}-1)}
=1~~\text{a.s.},
$$
which together with \eqref{eq:3.27} implies that for every choice $F_n\in\mathcal{K}(n)$,
$$
\limsup_{n\to\infty}\frac{F_n}{\mu n}=1,~~\text{a.s.}.
$$
 The proof is complete.
\end{proof}

\subsection{Proof of Theorem \ref{thm:1.5}}

To prove Theorem \ref{thm:1.5}, we need three lemmas. Their proofs are postponed to the next subsection.
For $r\geq1$, define
\begin{align}\label{3.4-1}
\mathcal{J}_r
:=
\left\{1\leq k\leq r:\zeta_k^*=\max_{1\leq j\leq r}\zeta_j^*\right\},
~~
I_r^-:=\min\mathcal{J}_r,
~~
I_r^+:=\max\mathcal{J}_r,
~~
J_r:=\#\mathcal{J}_r.
\end{align}
$I_r^-$ and $I_r^+$ are the first and last indices attaining the maximum block local time among the first $r$ regeneration blocks,  respectively.

\begin{lem}\label{lem:3.6}
There exist two positive constants $c_1$ and $c_2$ such that for all $r\geq2$,
\begin{equation}\label{eq:3.30}
\mathbf{P}(I_r^\pm=i)\leq\frac{c_2}{r},~~1\leq i\leq r,
\end{equation}
and
\begin{equation}\label{eq:3.31}
c_1\frac{l}{r}
\leq \mathbf{P}(I_r^\pm\leq l)
\leq c_2\frac{l}{r},~~1\leq l\leq\frac{r}{2}.
\end{equation}
Moreover,
\begin{equation}\label{eq:3.32}
I_r^-\to\infty \quad\text{and}\quad I_r^+\to\infty\ \text{a.s.}.
\end{equation}
\end{lem}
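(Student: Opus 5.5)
We work only with the i.i.d. sequence $(\zeta_k^*)_{k\ge1}$ of Lemma~\ref{lem:3.4}. Write $p_m:=\mathbf{P}(\zeta_1^*=m)$ and $G_m:=\mathbf{P}(\zeta_1^*\le m)$. By Lemma~\ref{lem:3.5}, $\zeta_1^*$ is unbounded and $\mathbf{P}(\zeta_1^*\ge m+1)\ge\eta\,\mathbf{P}(\zeta_1^*\ge m)$, so the tail decays at most geometrically. The whole proof rests on one key consequence: ties at the maximum are controlled. Concretely, we need a constant $C$ with $\mathbf{E}[J_r]\le C$ uniformly in $r$, or more usefully, $p_m\le C'(1-G_m)$ for every $m$. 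The second bound follows from \eqref{eq:3.22}, since $\mathbf{P}(\zeta_1^*\ge m+1)\ge\eta\,\mathbf{P}(\zeta_1^*\ge m)$ gives $p_m=\mathbf{P}(\ge m)-\mathbf{P}(\ge m+1)\le(1-\eta)\mathbf{P}(\ge m)$ and $1-G_m=\mathbf{P}(\ge m+1)\ge\eta\,\mathbf{P}(\ge m)$. Hence $p_m\le\frac{1-\eta}{\eta}(1-G_m)$.

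For \eqref{eq:3.30}, decompose on the value $m$ of the maximum. We have
\[
\mathbf{P}(I_r^-=i)=\sum_m G_{m-1}^{\,i-1}\,p_m\,G_m^{\,r-i},
\]
and symmetrically $\mathbf{P}(I_r^+=i)=\sum_m G_m^{\,i-1}p_mG_{m-1}^{\,r-i}$. Both are bounded by $\sum_m p_m G_m^{\,r-1}\cdot G_m^{-?}$; to avoid the mismatch, use $G_{m-1}\le G_m$, which gives the bound $\sum_m p_m G_m^{r-1}$. Now compare with a telescoping sum. Since $G_m^r-G_{m-1}^r=(G_m-G_{m-1})\sum_{j<r}G_m^jG_{m-1}^{r-1-j}$, we want to show $p_mG_m^{r-1}\le \frac{C}{r}(G_m^r-G_{m-1}^r)$. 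This holds when $G_{m-1}\ge G_m(1-c/r)$. When $p_m$ is large relative to $1-G_m$ this could fail, but the bound $p_m\le C'(1-G_m)$ handles it. In that regime $G_{m-1}=G_m-p_m\ge G_m-C'(1-G_m)$, and whenever $1-G_m\ge c/r$ the factor $G_m^{r-1}\le e^{-c(r-1)/r\cdot}$ decays. So we split into $m$ with $r(1-G_m)\le 1$, where the telescoping comparison works with a constant, and $m$ with $r(1-G_m)>1$, where $p_mG_m^{r-1}\le C'(1-G_m)e^{-(r-1)(1-G_m)}$. The latter terms are summed along the geometric-type decay of $1-G_m$ guaranteed by \eqref{eq:3.22}: consecutive values of $1-G_m$ differ by a factor in $[\eta,1]$, so the sum is comparable to $\int$ of $x e^{-rx}\,dx/x\lesssim 1/r$. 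Organizing this sum cleanly is the main obstacle. The fallback is to bound directly
\[
\sum_m p_mG_m^{r-1}\le C'\sum_m(1-G_m)G_m^{r-1}\le \frac{C'}{\eta}\sum_m (G_{m}-G_{m-1})\cdots,
\]
using $1-G_m\le \eta^{-1}\cdot$ (mass at $\ge m$), until one gets $\lesssim 1/r$.

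For \eqref{eq:3.31}, the upper bound is immediate by summing \eqref{eq:3.30} over $i\le l$. For the lower bound we use $\mathbf{P}(I_r^-\le l)\ge\mathbf{P}(I_r^+\le l)$, which reduces matters to $\mathbf{P}(\max_{k\le l}\zeta_k^*>\max_{l<k\le r}\zeta_k^*)$. We then apply exchangeability: the event that the strict overall maximum occurs in the first $l$ blocks has probability $\frac{l}{r}\mathbf{P}(J_r=1)$. By the same tie estimate, $\mathbf{P}(J_r=1)\ge c>0$ uniformly in $r$, because $\mathbf{P}(J_r\ge2)\le\binom r2\sum_m p_m^2G_m^{r-2}\le 1-c$ via $p_m\le C'(1-G_m)$ and the same summation. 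This yields $c_1l/r$.

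Finally, for \eqref{eq:3.32}, fix $L$. By \eqref{eq:3.31}, $\mathbf{P}(I_r^\pm\le L)\le c_2L/r\to0$, which already gives convergence in probability. For the almost sure statement, note that $\max_{k\le L}\zeta_k^*$ is a.s. finite while $\max_{k\le r}\zeta_k^*\to\infty$ by \eqref{eq:3.28}. Hence eventually the maximum over the first $r$ blocks strictly exceeds every $\zeta_k^*$ with $k\le L$, so $I_r^-> L$ and therefore $I_r^+>L$.
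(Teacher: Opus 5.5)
Your reductions are sound: the formula for $\mathbf{P}(I_r^\pm=i)$ and its bound by $\sum_m p_mG_m^{r-1}$, the chain $\mathbf{P}(I_r^-\le l)\ge\mathbf{P}(I_r^+\le l)\ge\frac lr\mathbf{P}(J_r=1)$, and the argument for \eqref{eq:3.32} all work. But the two estimates everything rests on are not established, and the routes you sketch for them fail.

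For \eqref{eq:3.30}, you never actually prove $\sum_m p_mG_m^{r-1}=O(1/r)$, and your regime-$2$ step cannot work as written. Bounding $p_m$ termwise by $C'(1-G_m)$ discards the fact that the $p_m$ are the successive decrements of $1-G_m$. Meanwhile \eqref{eq:3.22} bounds the ratios $(1-G_m)/(1-G_{m-1})$ only from below by $\eta$, not from above by some $\theta<1$. So the values $1-G_m$ may cluster (many $m$ with tiny $p_m$), and $\sum_m(1-G_m)e^{-(r-1)(1-G_m)}$ need not be $O(1/r)$. The missing step is a Riemann-sum comparison that keeps $p_m$. Since $1-G_m\ge\eta(1-G_{m-1})$, for every $x\in[1-G_m,1-G_{m-1}]$ you have $G_m^{r-1}\le e^{-(r-1)(1-G_m)}\le e^{-\eta(r-1)x}$. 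Hence $p_mG_m^{r-1}\le\int_{1-G_m}^{1-G_{m-1}}e^{-\eta(r-1)x}\,dx$. Summing over $m\ge1$ gives $\sum_mp_mG_m^{r-1}\le\int_0^1e^{-\eta(r-1)x}\,dx\le\frac{1}{\eta(r-1)}$, i.e.\ $\mathbf{E}[J_r]\le 2/\eta$, with no case split.

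For the lower bound in \eqref{eq:3.31}, your claim $\binom r2\sum_mp_m^2G_m^{r-2}\le1-c$ is false in general. That quantity is exactly $\mathbf{E}\binom{J_r}{2}$, the expected number of tied pairs at the maximum, and it can be much larger than $1$. For example, take $X_1=\pm1$ with $\mathbf{P}(X_1=-1)=\epsilon$ small. Then $\tau_1=1$ with $\bar{\mathbf{P}}$-probability at least $1-\epsilon$, so $p_1\ge1-\epsilon$, and already for $r=3$ the quantity is about $3$. Nothing prevents the same at arbitrarily large $r$: whenever $r\,\mathbf{P}(\zeta_1^*=m)$ is large while $r\,\mathbf{P}(\zeta_1^*>m)$ is small, the maximum is typically attained about $r\,\mathbf{P}(\zeta_1^*=m)$ times. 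So a union bound over tied pairs cannot give $\inf_r\mathbf{P}(J_r=1)>0$. The idea you need is to pick the level $m_r$ with $\eta/r<\mathbf{P}(\zeta_1^*\ge m_r)\le1/r$, which exists precisely because of \eqref{eq:3.22}. The event that exactly one of $\zeta_1^*,\dots,\zeta_r^*$ is at least $m_r$ forces $J_r=1$ and has probability at least $\eta(1-1/r)^{r-1}\ge\eta/e$.
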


\begin{lem}\label{lem:3.7}
There exists a  constant $C>0$ such that for all integers
$1\leq r<s$ satisfying
$s\geq2r,~1\leq l\leq\frac{r}{2},~1\leq L\leq\frac{s}{2},$
we have
\begin{align}
&\mathbf{P}(I_r^+\leq l,\ I_s^+\leq L)
\leq C\frac{l}{s}+C\frac{l}{r}\frac{L}{s},\label{lem-3.7-a}\\
&\mathbf{P}(I_r^-\leq l,\ I_s^-\leq L)
\leq C\frac{l}{s}+C\frac{l}{r}\frac{L}{s}.\label{lem-3.7-b}
\end{align}
\end{lem}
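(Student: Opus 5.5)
We prove Lemma~\ref{lem:3.7} using only that $(\zeta_k^*)_{k\geq1}$ is i.i.d.\ (Lemma~\ref{lem:3.4}) together with Lemma~\ref{lem:3.6}. We give the argument for $I^+$; the case of $I^-$ is identical after exchanging ``last'' and ``first'' and replacing strict inequalities by non-strict ones where ties matter. Write $M_r:=\max_{1\leq j\leq r}\zeta_j^*$ and $M_{(r,s]}:=\max_{r<j\leq s}\zeta_j^*$. The basic observation is that on $\{I_s^+\leq L\}$ with $L\leq s/2<r$ or $L\geq r$ we can split according to whether the overall maximizer up to time $s$ lies in the first $r$ blocks or not.

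\emph{Step 1 (case $I_s^+\leq r$).} Since $L\leq s/2$ but possibly $L>r$, first consider the event $\{I_s^+\leq r\}$. On it the last maximizer among the first $s$ blocks lies in $[1,r]$, so $M_{(r,s]}<M_r$ and hence $I_s^+=I_r^+$. Therefore
$$
\{I_r^+\leq l,\ I_s^+\leq L,\ I_s^+\leq r\}\subseteq\{I_s^+\leq l\},
$$
and Lemma~\ref{lem:3.6}, applied with $l\leq r/2\leq s/2$, gives probability at most $c_2 l/s$. This is the first term.

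\emph{Step 2 (case $r<I_s^+\leq L$).} Here $L>r$ is necessary. On this event $I_s^+$ is the last maximizer of the block $(\zeta_j^*)_{r<j\leq s}$, its position is at most $L$, and in addition $M_{(r,s]}\geq M_r$. We drop this last constraint, and then $\{I_r^+\leq l\}\in\sigma(\zeta_1^*,\dots,\zeta_r^*)$ and $\{I^{+,(r)}_{s-r}\leq L-r\}\in\sigma(\zeta_{r+1}^*,\dots,\zeta_s^*)$ are independent. Here $I^{+,(r)}_{s-r}$ denotes the last-maximizer index of the shifted i.i.d.\ sequence, which has the same law as $I^+_{s-r}$. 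Lemma~\ref{lem:3.6} then bounds the product by
$$
c_2\frac{l}{r}\cdot\mathbf{P}\bigl(I^+_{s-r}\leq L-r\bigr)\leq c_2\frac{l}{r}\cdot c_2'\frac{L}{s-r}\leq C\frac{l}{r}\frac{L}{s},
$$
using $s-r\geq s/2$. To apply (3.31) we need $L-r\leq(s-r)/2$. If this fails, we instead use the trivial bound $\mathbf{P}(\cdot)\leq1\leq 2L/s$, which still holds since $L-r>(s-r)/2$ forces $L\gtrsim s/2$ up to constants; alternatively we sum the pointwise bound (3.30) over $i\leq L-r$ to get $c_2(L-r)/(s-r)$, which is valid for all $L$. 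We prefer the second route because it uses only (3.30).

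\emph{Main obstacle.} The main technical point is the treatment of ties and the exact identification $I_s^+=I_r^+$ in Step 1 for $I^-$. For $I^-$, the relation $I_s^-\leq r$ means that the first maximizer lies in $[1,r]$, which gives $M_r\geq M_{(r,s]}$ and hence again $I_s^-=I_r^-$. In Step 2, $I_s^->r$ forces $M_{(r,s]}>M_r$, and $I_s^-$ is then the first maximizer of the tail block. So the same decomposition works. Summing the two cases yields both~\eqref{lem-3.7-a} and~\eqref{lem-3.7-b}.
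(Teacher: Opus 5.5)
Your proposal is correct and matches the paper's proof: you use the same split into $\{I_s^\pm\le r\}$, where $I_s^\pm=I_r^\pm$ gives the $l/s$ term, and $\{r<I_s^\pm\le L\}$, which is contained in the event that the tail-block maximizer index is at most $L-r$; that event is independent of $I_r^\pm$ and gives the $\frac{l}{r}\frac{L}{s}$ term. The case distinction you worry about in Step 2 is vacuous, because $L\le s/2$ already gives $L-r\le s/2-r\le (s-r)/2$, so \eqref{eq:3.31} applies directly, exactly as in the paper.
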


\begin{lem}\label{pro:3.8}
For every non-decreasing function $b:\mathbb{N}\to(0,\infty)$,
\begin{eqnarray}\label{eq:3.33}
\liminf_{r\to\infty}\frac{b(r)I_r^\pm}{r}
=\left\{
\begin{array}{cl}
0\ a.s.& \mbox{if}~~
\sum_{r=2}^{\infty}\frac{1}{rb(r)}=\infty,\\
\infty\ a.s.&\mbox{if}~~
\sum_{r=2}^{\infty}\frac{1}{rb(r)}<\infty.
\end{array}
\right.
\end{eqnarray}
\end{lem}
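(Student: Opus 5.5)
Both halves rest on Lemmas~\ref{lem:3.6} and~\ref{lem:3.7}, used along a geometric subsequence $r_k:=2^k$, with the monotonicity of $b$ letting us compare $\sum_r 1/(rb(r))$ with $\sum_k 1/b(2^k)$ (Cauchy condensation). We treat $I_r^+$; the case of $I_r^-$ is identical with \eqref{lem-3.7-b} in place of \eqref{lem-3.7-a}. By a standard truncation (replacing $b$ by $b\vee 1$, or noting that the statement is trivial when $b$ stays bounded, since then the sum diverges and $I_r^+\leq r$ makes the liminf finite), we may assume $b(r)\to\infty$ where needed.

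\emph{Convergent case.} Assume $\sum_k 1/b(2^k)<\infty$ and fix $M>0$. For $2^k\le r<2^{k+1}$, the event $\{b(r)I_r^+\le Mr\}$ gives $b(2^k)I_r^+\le M2^{k+1}$. Since $I_r^+$ is non-decreasing in $r$ up to resets, we do not use monotonicity of $I_r^+$ directly. Instead we observe that if $I_r^+\le l$ for some $r\in[2^k,2^{k+1})$, then the maximum over the first $r$ blocks is attained within the first $l$ blocks. In particular, $\max_{j\le l}\zeta_j^*\ge \max_{j\le 2^k}\zeta_j^*$, so $I_{2^k}^-\le l$. Taking $l=\lfloor 2M2^k/b(2^k)\rfloor$, \eqref{eq:3.31} gives probability at most $c_2\cdot 2M/b(2^k)$, which is summable. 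Borel--Cantelli then gives $\liminf b(r)I_r^+/r\ge M$, and letting $M\to\infty$ gives $\infty$. The symmetric argument controls $I_r^-$ through $I_{2^k}^-$ as well.

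\emph{Divergent case.} Assume $\sum_k 1/b(2^k)=\infty$ and fix $\varepsilon>0$. Put $l_k:=\lceil \varepsilon 2^k/b(2^k)\rceil\wedge 2^{k-1}$ and $G_k:=\{I^+_{2^k}\le l_k\}$. By \eqref{eq:3.31}, $\mathbf P(G_k)\ge c_1 l_k/2^k$, and $\sum_k \mathbf P(G_k)=\infty$. For $j\ge k+1$, \eqref{lem-3.7-a} gives
\[
\mathbf P(G_kG_j)\le C\frac{l_k}{2^j}+C\frac{l_k}{2^k}\frac{l_j}{2^j}\le C'\bigl(2^{-(j-k)}\mathbf P(G_k)+\mathbf P(G_k)\mathbf P(G_j)\bigr).
\]
Summing over $k<j\le n$, the first term contributes $O\bigl(\sum_{k\le n}\mathbf P(G_k)\bigr)=o\bigl((\sum_{k\le n}\mathbf P(G_k))^2\bigr)$. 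The Kochen--Stone lemma therefore yields $\mathbf P(G_k\ \mathrm{i.o.})\ge 1/C''>0$. On $G_k$ we have $b(2^k)I^+_{2^k}/2^k\le \varepsilon+b(2^k)/2^k$, and we must make sure that $b(2^k)/2^k\to0$ can be arranged. We do this by replacing $b$ with $b\wedge \sqrt{r}$, which keeps the sum divergent and only decreases the liminf bound in the right direction. Since the liminf is then $\le\varepsilon$ with positive probability, it remains to upgrade this to probability one.

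\emph{Zero--one step (main obstacle).} The event $\{\liminf b(r)I_r^+/r\le\varepsilon\}$ is invariant under changing finitely many $\zeta_j^*$, because $I_r^+\to\infty$ by \eqref{eq:3.32}. Indeed, once the overall record exceeds the maxima of the modified blocks, $I_r^+$ is unchanged for all large $r$. By Lemma~\ref{lem:3.4} the $\zeta_j^*$ are i.i.d., so Hewitt--Savage (or Kolmogorov, after this reformulation) gives probability one. Letting $\varepsilon\downarrow0$ along rationals gives liminf $0$ a.s. The delicate points are making this exchangeability argument rigorous when ties occur, and verifying the covariance bound with the ceilings in $l_k$; I expect the tail-triviality step to be the main obstacle.
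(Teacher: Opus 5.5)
Your convergent case is correct, and it takes a slightly different route from the paper. The paper inverts via $k_i=\lceil ib(i)/A\rceil$ and uses the point bound \eqref{eq:3.30}. You instead block $r$ dyadically: $I^\pm_r\le l\le 2^{k-1}$ for some $r\in[2^k,2^{k+1})$ forces $I^-_{2^k}\le l$. (In fact $I^\pm_r$ is non-decreasing in $r$, which gives $I^\pm_{2^k}\le l$ directly.) You then apply the cumulative bound \eqref{eq:3.31} and Cauchy condensation. This handles $I^+$ and $I^-$ together and is, if anything, cleaner.

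The divergent case follows the paper's scheme: dyadic times, Lemma~\ref{lem:3.7}, a second-moment argument (Kochen--Stone in place of Paley--Zygmund), and a tail $0$-$1$ law. However, the step that removes the error term $b(2^k)/2^k$ created by the ceiling in $l_k$ is wrong as stated.

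\begin{itemize}
\item Replacing $b$ by $\tilde b:=b\wedge\sqrt{\cdot}$ makes $b(r)I^+_r/r$ smaller. So $\liminf_r \tilde b(r)I^+_r/r\le\varepsilon$ says nothing about $\liminf_r b(r)I^+_r/r$. When proving that a liminf vanishes, you may only pass to a larger function whose series still diverges.
\item The term cannot simply be ignored either. A non-decreasing $b$ with large jumps can have $\sum_k 1/b(2^k)=\infty$ while $b(2^k)\ge 2^k$ for infinitely many $k$. For such $k$ you have $l_k=1$, and on $G_k$ you only get $b(2^k)I^+_{2^k}/2^k\le b(2^k)/2^k$, which need not be small.
\end{itemize}

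The repair is short. If $b(2^k)>\varepsilon 2^k$, then $l_k=1$ and \eqref{eq:3.31} gives $\mathbf{P}(G_k)\le c_2 2^{-k}$. By Borel--Cantelli, these $G_k$ occur only finitely often. For the remaining $k$, $x:=\varepsilon 2^k/b(2^k)\ge1$ and $\lceil x\rceil\le 2x$, so $b(2^k)I^+_{2^k}/2^k\le 2\varepsilon$ on $G_k$. Hence $\mathbf{P}(G_k\ \mathrm{i.o.})>0$ yields $\liminf_r b(r)I^+_r/r\le 2\varepsilon$ with positive probability. The paper avoids the issue by using a floor and the index set $\mathcal{Q}=\{q:\ell_q\ge1\}$, whose complement contributes only a convergent part of $\sum_q 1/b(2^q)$.

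Two smaller points.

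\begin{itemize}
\item Your remark that bounded $b$ is trivial is false. $I^+_r\le r$ only makes the liminf finite, whereas the claim is that it equals $0$, i.e.\ $\liminf_r I^\pm_r/r=0$. That is a genuine statement about gaps between records of the discrete i.i.d.\ sequence $(\zeta^*_k)$. It does no harm here only because your divergent-case argument never uses $b\to\infty$.
\item The $0$-$1$ step you flag as the main obstacle is routine, and it is exactly how the paper concludes. For fixed $i$, let $\bar I^{\pm,i}_r$ be the first/last maximizer among $\zeta^*_{i+1},\ldots,\zeta^*_r$. As in the proof of \eqref{eq:3.32}, almost surely the running maximum eventually strictly exceeds $\max_{j\le i}\zeta^*_j$, after which $I^\pm_r=\bar I^{\pm,i}_r$. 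Ties are harmless because the inequality is strict. So your event agrees up to a null set with one in $\sigma(\zeta^*_k:k>i)$ for every $i$, and Lemma~\ref{lem:3.4} with Kolmogorov's $0$-$1$ law gives probability one.
\end{itemize}

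The ceilings also cause no trouble in Lemma~\ref{lem:3.7}, since $1\le l_k\le 2^{k-1}$ and $2^j\ge 2\cdot 2^k$ for $j>k$.
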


\begin{proof}[\bf Proof of Theorem~\ref{thm:1.5}]
Let $a:\mathbb{N}\to(0,\infty)$ be non-decreasing. For every $c>0$, the monotonicity of $a(\cdot)$ and a grouping of terms give
\begin{equation}\label{eq:3.34}
\sum_{n=2}^{\infty}\frac{1}{na(n)}<\infty
\quad\Longleftrightarrow\quad
\sum_{n=2}^{\infty}\frac{1}{n a(\lfloor cn\rfloor\vee2)}<\infty.
\end{equation}

Choose
$0<\delta<\min\left\{1,\frac{1}{\alpha}\right\}$,  where $\alpha$ is defined in (\ref{eq:3.10}).
Set
$r_n:=\lfloor\delta n\rfloor\vee1.$
Then $r_n\to\infty$, $r_n/n\to\delta$, and \eqref{eq:3.11} gives
$$
\frac{\tau_{(r_n+1)}}{n}
=
\frac{\tau_{(r_n+1)}}{r_n+1}\frac{r_n+1}{n}
\to\alpha\delta<1,~~\text{a.s.}.
$$
Consequently, almost surely for all sufficiently large $n$,
\begin{equation}\label{eq:3.35}
\tau_{(r_n+1)}\leq n.
\end{equation}
By \eqref{eq:3.28} and $\max_{x\in\mathbb{Z}}\xi(x,\tau_1-1)<\infty~\text{a.s.}$, we get that almost surely for all sufficiently large $n$,
$
\zeta_{I_{r_n}^-}^*
=
\max_{1\leq k\leq r_n}\zeta_k^*
>
\max_{x\in\mathbb{Z}}\xi(x,\tau_1-1).
$
The definition of $I_{r_n}^-$ further gives $\zeta_k^*<\zeta_{I_{r_n}^-}^*,~1\leq k<I_{r_n}^-.$ It follows from \eqref{eq:3.35} and Lemma~\ref{lem:3.4} that, for
all $x<S_{\tau_{I_{r_n}^-}}$,
$$
\xi(x,n)
\leq
\max\left\{
\max_{y\in\mathbb{Z}}\xi(y,\tau_1-1),
\max_{1\leq k<I_{r_n}^-}\zeta_k^*
\right\}
<
\zeta_{I_{r_n}^-}^*
\leq
\xi^*(n).
$$
Thus almost surely for all
sufficiently large $n$,
\begin{equation}\label{eq:3.36}
F_n\geq S_{\tau_{I_{r_n}^-}}.
\end{equation}

Equation~\eqref{eq:3.32} implies that
$I_{r_n}^-\to\infty$ a.s.. Applying \eqref{eq:3.11} along the sequence $(I_{r_n}^-)_{n\geq1}$, we obtain
\begin{equation}\label{eq:3.37}
\frac{S_{\tau_{I_{r_n}^-}}}{I_{r_n}^-}\to\beta \ \text{a.s.}.
\end{equation}
Together with $r_n/n\to\delta$, \eqref{eq:3.37} implies that, almost surely for all sufficiently large $n$,
$
S_{\tau_{I_{r_n}^-}}\geq\frac{\beta}{2}I_{r_n}^-,
~
\frac{r_n}{n}\geq\frac{\delta}{2}.
$
By $r_n\leq n$ and $a(\cdot)$ is non-decreasing, \eqref{eq:3.36} yields
\begin{equation}\label{eq:3.38}
\frac{a(n)F_n}{n}
\geq
\frac{a(n)S_{\tau_{I_{r_n}^-}}}{n}
\geq
\frac{\beta}{2}\frac{a(n)I_{r_n}^-}{n}
\geq
\frac{\beta\delta}{4}\frac{a(r_n)I_{r_n}^-}{r_n}.
\end{equation}

Suppose first that
$$
\sum_{n=2}^{\infty}\frac{1}{na(n)}<\infty.
$$
Applying Lemma \ref{pro:3.8} to $I_r^-$ and using $r_n\to\infty$, \eqref{eq:3.38} yields
$$
\liminf_{n\to\infty}\frac{a(n)F_n}{n}
\geq
\frac{\beta\delta}{4}\liminf_{n\to\infty}\frac{a(r_n)I_{r_n}^-}{r_n}
\geq
\frac{\beta\delta}{4}\liminf_{r\to\infty}\frac{a(r)I_r^-}{r}
=
\infty,~~\text{a.s.}.
$$

Suppose next that
$$
\sum_{n=2}^{\infty}\frac{1}{na(n)}=\infty.
$$
At time $\tau_{r+1}-1$, the first $r$ regeneration blocks have been completed. Then Lemma~\ref{lem:3.4}  gives
$
\max_{x\in\mathcal{E}_k}\xi(x,\tau_{r+1}-1)=\zeta_k^*,~1\leq k\leq r.
$
Furthermore, \eqref{eq:3.28} shows that, almost surely for all sufficiently large $r$,
$\max_{1\leq k\leq r}\zeta_k^*>\max_{x\in\mathbb{Z}}\xi(x,\tau_1-1).$
For $x\geq S_{\tau_{r+1}}$, we also have
$\xi(x,\tau_{r+1}-1)=0.$
Hence every favorite site at time $\tau_{r+1}-1$ belongs to a block with the block index in $\mathcal{J}_r$ (cf. \eqref{3.4-1}). Thus, almost surely for all sufficiently large $r$,
\begin{equation*}
\mathcal{K}(\tau_{r+1}-1)
\subseteq
\bigcup_{k\in\mathcal{J}_r}\mathcal{E}_k
\subseteq
[S_{\tau_{I_r^-}},S_{\tau_{(I_r^+ +1)}})\cap\mathbb{Z}.
\end{equation*}
In particular,
\begin{align}\label{eq:3.39}
S_{\tau_{I_r^-}}\leq F_{\tau_{r+1}-1}<S_{\tau_{(I_r^+ +1)}}.
\end{align}

By~\eqref{eq:3.11},  we know that $\frac{\tau_{r+1}-1}{r}\to\alpha\ \text{a.s.}.$
Moreover, \eqref{eq:3.32} implies that $I_r^+\to\infty~ \text{a.s.}.$ Applying \eqref{eq:3.11} along $(I_r^++1)_{r\geq1}$, we get
$$
\frac{S_{\tau_{(I_r^+ +1)}}}{I_r^+}
=
\frac{S_{\tau_{(I_r^+ +1)}}}{I_r^++1}\frac{I_r^++1}{I_r^+}\to\beta~~ \text{a.s.}.
$$
Therefore, there exists a deterministic constant $C>1$ such that, for all sufficiently large $r$,
\begin{equation}\label{eq:3.40}
C^{-1}r\leq \tau_{r+1}-1 \leq Cr,
~~
S_{\tau_{(I_r^+ +1)}}\leq CI_r^+~~\text{a.s.}.
\end{equation}
Equation~\eqref{eq:3.34} yields
$\sum_{r=2}^{\infty}\frac{1}{r a(\lfloor Cr\rfloor\vee2)}=\infty.$
Since $r\mapsto a(\lfloor Cr\rfloor\vee2)$ is non-decreasing, Lemma \ref{pro:3.8}, applied to $I_r^+$, yields
$\liminf_{r\to\infty}\frac{a(\lfloor Cr\rfloor\vee2)I_r^+}{r}=0\ ~\text{a.s.}$.
Combining \eqref{eq:3.39}, \eqref{eq:3.40}, and the monotonicity of $a$, we obtain that almost surely for all sufficiently large $r$,
$$
\frac{a(\tau_{r+1}-1)F_{\tau_{r+1}-1}}{\tau_{r+1}-1}
\leq
\frac{a(\tau_{r+1}-1)S_{\tau_{(I_r^+ +1)}}}{\tau_{r+1}-1}
\leq
C^2\frac{a(\lfloor Cr\rfloor\vee2)I_r^+}{r}.
$$
Consequently,
$$
\liminf_{r\to\infty}\frac{a(\tau_{r+1}-1)F_{\tau_{r+1}-1}}{\tau_{r+1}-1}
\leq0~~ \text{a.s.}.
$$

On the other hand, \eqref{eq:3.36} and \eqref{eq:3.37} show that $F_n\geq0$ a.s. for all sufficiently large $n$. Since $\tau_{r+1}\to\infty~\text{a.s.}$, we have
$$
0
\leq
\liminf_{n\to\infty}\frac{a(n)F_n}{n}
\leq
\liminf_{r\to\infty}\frac{a(\tau_{r+1}-1)F_{\tau_{r+1}-1}}{\tau_{r+1}-1}
\leq0~~\text{a.s.}.
$$
Therefore, for every choice $F_n\in\mathcal{K}(n)$,
$$
\liminf_{n\to\infty}\frac{a(n)F_n}{n}=0~~ \text{a.s.}.
$$
The proof is complete.
\end{proof}

\subsection{Proofs of Lemmas 3.6, 3.7, 3.8}

\begin{proof}[\bf Proof of Lemma \ref{lem:3.6}]
Set
$$
p_m^*:=\mathbf{P}(\zeta_1^*\geq m),
~~
\bar p_m:=\mathbf{P}(\zeta_1^*=m)=p_m^*-p_{m+1}^*,~~m\geq1.
$$
By Lemmas~\ref{lem:3.4} and \ref{lem:3.5}, we know that $(\zeta_k^*)_{k\geq1}$ is i.i.d. and for any $m\geq1$, $1=p_1^*\geq p_m^*\downarrow0$, $p_{m+1}^*\geq\eta p_m^*>0$. By the i.i.d. property of $(\zeta_j^*)_{1\leq j\leq r}$, for every $1\leq k\leq r$,
$$
\begin{aligned}
\mathbf{P}(k\in\mathcal{J}_r)
&=
\sum_{m\geq1}\mathbf{P}\left(\zeta_k^*=m,\,\zeta_j^*\leq m,\ \forall \ 1\leq j\leq r,j\neq k\right)\\
&=\sum_{m\geq1} \mathbf{P}(\zeta_k^*=m) \prod_{1\leq j \leq r,~ j\neq k} \mathbf{P}(\zeta_j^*\leq m)
=
\sum_{m\geq1}\bar p_m(1-p_{m+1}^*)^{r-1}.
\end{aligned}
$$
Consequently, using $1-x\leq e^{-x}$,
\begin{align}\label{eq:3.41}
\mathbf{E}[J_r]
&=\sum_{k=1}^r\mathbf{P}(k\in\mathcal{J}_r)
=r\sum_{m\geq1}\bar{p}_m(1-p_{m+1}^*)^{r-1} \nonumber\\
&\leq
r\sum_{m\geq1}\bar{p}_m\exp\{-(r-1)p_{m+1}^*\}
\leq
r\sum_{m\geq1}(p_m^*-p_{m+1}^*)\exp\{-\eta(r-1)p_m^*\}\nonumber\\
&\leq
r\sum_{m\geq1}\int_{p_{m+1}^*}^{p_m^*}\exp\{-\eta(r-1)x\}\,dx
=
r\int_0^1\exp\{-\eta(r-1)x\}\,dx\nonumber\\
&=
\frac{r}{\eta(r-1)}\left(1-e^{-\eta(r-1)}\right)
\leq
\frac{2}{\eta},~~\forall r\geq2.
\end{align}
For $1\leq i \leq r$, $\{I_r^\pm=i\}\subseteq\{i\in\mathcal{J}_r\}.$
Hence
\eqref{eq:3.41} gives
\begin{align}\label{eq:3.41-a}
\mathbf{P}(I_r^\pm=i)
\leq
\mathbf{P}(i\in\mathcal{J}_r)
=
\frac{\mathbf{E}[J_r]}{r}
\leq
\frac{2}{\eta r},
\end{align}
which proves \eqref{eq:3.30}.

 Next, we prove \eqref{eq:3.31}. By \eqref{eq:3.41-a}, we have
\begin{align}\label{eq:3.41-b}
\mathbf{P}(I_r^\pm\leq l)
\leq
\sum_{i=1}^{l}\mathbf{P}(I_r^\pm=i)
\leq
\frac{2l}{\eta r}.
\end{align}
For any $1\leq q\leq r$, by the definitions in (\ref{3.4-1}), we know that
$$
\{J_r=q\}=\bigcup_{A\subset \{1,\ldots,r\}, \#A=q}\{\mathcal{J}_r=A\}.
$$
By the i.i.d. property of $(\zeta_j^*)_{1\leq j\leq r}$, we know that the probability $\mathbf{P}(\mathcal{J}_r=A)$ only depends on $\#A$.  Thus conditional on $\{J_r=q\}$, $\mathcal{J}_r$ is uniformly distributed over all $q$-element subsets of $\{1,\ldots,r\}$.
Therefore, for every $A\subseteq\{1,\ldots,r\}$ with $\#A=q$,
$
\mathbf{P}(\mathcal{J}_r=A\mid J_r=q)=\binom{r}{q}^{-1}.
$
If $q\leq r-l$, then
$$
\mathbf{P}(I_r^->l\mid  J_r=q)
=
\frac{\binom{r-l}{q}}{\binom{r}{q}}
=
\prod_{j=0}^{q-1}\frac{r-l-j}{r-j}
\leq
\frac{r-l}{r}.
$$
If $q>r-l$, this conditional probability is zero. Hence $\mathbf{P}(I_r^-\leq l\mid J_r=q)\geq\frac{l}{r},~1\leq q\leq r.$ Therefore
$$
\mathbf{P}(I_r^-\leq l)=
\sum_{q=1}^r\mathbf{P}(I_r^-\leq l\mid J_r=q)\mathbf{P}(J_r=q)
\geq
\frac{l}{r}\sum_{q=1}^r\mathbf{P}(J_r=q)
=
\frac{l}{r}.
$$

For $I_r^+$, by following the above analysis, we have
$$
\mathbf{P}(I_r^+\leq l\mid  J_r=q)
=
\begin{cases}
\displaystyle
\frac{\binom{l}{q}}{\binom{r}{q}},
& q\leq l,\\[2ex]
0,
& q>l.
\end{cases}
$$
It is enough to consider the case that $\{J_r=1\}$.  Choose $m_r\geq2$ such that
$p_{m_r}^*\leq\frac{1}{r}<p_{m_r-1}^*.$ Since $p_{m_r}^*\geq\eta p_{m_r-1}^*$, then
$\frac{\eta}{r}<p_{m_r}^*\leq\frac{1}{r}.$
The event that exactly one of $\zeta_1^*,\ldots,\zeta_r^*$ is at least $m_r$ implies $J_r=1$. Consequently,
$$
\mathbf{P}(J_r=1)
\geq
r p_{m_r}^*(1-p_{m_r}^*)^{r-1}
\geq
\eta\left(1-\frac{1}{r}\right)^{r-1}=\eta\left(1+\frac{1}{r-1}\right)^{-(r-1)}
\geq  \frac{\eta}{e}.
$$
By $\mathbf{P}(I_r^+\leq l\mid J_r=1)=\frac{l}{r},$ we obtain
$$
\mathbf{P}(I_r^+\leq l)
\geq
\mathbf{P}(I_r^+\leq l,\ J_r=1)
\geq \frac{\eta}{e}\frac{l}{r}.
$$
Thus,
$\mathbf{P}(I_r^\pm\leq l)\geq\frac{\eta}{e}\frac{l}{r},$
which together with (\ref{eq:3.41-b}) implies that  \eqref{eq:3.31} holds with
$c_1=\frac{\eta}{e},~c_2=\frac{2}{\eta}.$

Finally, fix $K\geq1$. For every integer $m\geq1$,  by Lemma \ref{lem:3.4} and (\ref{eq:3.21}), we have
$\sum_{k>K}\mathbf{P}(\zeta_k^*>m)=\infty.$
The second Borel-Cantelli lemma and a countable intersection over $m$ yield $\sup_{k>K}\zeta_k^*=\infty,~\text{a.s.}.$
On the other hand,
$$
\max_{1\leq k\leq K}\zeta_k^*<\infty,~~\text{a.s.},
$$
since for any integer $k\geq 1$, $\tau_k<\infty~ \text{a.s.}.$ Thus, almost surely, there exists $L>K$ such that
$\zeta_L^*>\max_{1\leq k\leq K}\zeta_k^*.$  It follows that for any $r\geq L$, the maximum among $\zeta_1^*,\ldots,\zeta_r^*$ cannot be attained within the first $K$ blocks.
And thus
$$
I_r^->K \quad\text{and}\quad I_r^+>K.
$$
Since $K$ is arbitrary, \eqref{eq:3.32} follows. The proof is complete.
\end{proof}

\medskip

\begin{proof}[\bf Proof of Lemma \ref{lem:3.7}]
In the following, we only give the proof of (\ref{lem-3.7-a}). The proof of (\ref{lem-3.7-b}) is similar.

We decompose
$$
\mathbf{P}(I_r^+\leq l,\ I_s^+\leq L)
=
\mathbf{P}(I_r^+\leq l,\ I_s^+\leq L,\ I_s^+\leq r)
+
\mathbf{P}(I_r^+\leq l,\ r<I_s^+\leq L).
$$
On $\{I_s^+\leq r\}$, we have $I_s^+=I_r^+.$  Then by Lemma~\ref{lem:3.6}, we have
\begin{equation}\label{eq:3.42}
\mathbf{P}(I_r^+\leq l,\ I_s^+\leq L,\ I_s^+\leq r)
\leq
\mathbf{P}(I_s^+\leq l)
\leq
C\frac{l}{s}.
\end{equation}

If $L\leq r$, then $\mathbf{P}(I_r^+\leq l,\ r<I_s^+\leq L)=0$. Suppose that $L>r$, and set
$$
\mathcal{J}_{r,s}
:=
\left\{1\leq j\leq s-r:\zeta_{r+j}^*=\max_{1\leq i\leq s-r}\zeta_{r+i}^*\right\},
~~
I_{r,s}^-:=\min\mathcal{J}_{r,s},
~~
I_{r,s}^+:=\max\mathcal{J}_{r,s}.
$$
The random variables $I_r^+$ and $I_{r,s}^+$ depend on disjoint families of i.i.d. random variables. Hence they are independent, and
$I_{r,s}^+\overset{\mathrm d}{=}I_{s-r}^+.$
Moreover,
$\{r<I_s^+\leq L\}\subseteq\{I_{r,s}^+\leq L-r\}.$
 By the assumption that $s\geq 2r$ and $1\leq L\leq s/2$ in Lemma 3.7, we get that
$1\leq L-r\leq \frac{s}{2}-r\leq \frac{s-r}{2}, s-r\geq \frac{s}{2}$.
Lemma \ref{lem:3.6} then yields
\begin{equation}\label{eq:3.43}
\mathbf{P}(I_r^+\leq l,\ r<I_s^+\leq L)
\leq
\mathbf{P}(I_r^+\leq l)\mathbf{P}(I_{r,s}^+\leq L-r)
\leq
C\frac{l}{r}\frac{L-r}{s-r}
\leq
C\frac{l}{r}\frac{L}{s},
\end{equation}
which together with \eqref{eq:3.42} implies that (\ref{lem-3.7-a}). The proof is complete.
\end{proof}

\medskip

\begin{proof}[\bf Proof of Lemma \ref{pro:3.8}]
Suppose first that
$$
\sum_{r=2}^{\infty}\frac{1}{rb(r)}<\infty.
$$
Then $b(r)\to\infty$. Fix $A>0$.  Define
$k_i:=\left\lceil\frac{i b(i)}{A}\right\rceil, \forall i\geq1.$
Let $i_0$ be a positive integer such that $b(i)>A$ for all $i\ge i_0$.
By \eqref{eq:3.30},
$$
\sum_{i=i_0}^{\infty}\mathbf{P}(I_{k_i}^\pm=i)
\leq
C\sum_{i=i_0}^{\infty}\frac{1}{k_i}
\leq
CA
\sum_{i=i_0}^{\infty}\frac{1}{i b(i)}<\infty.
$$
The Borel--Cantelli lemma gives
\begin{align}\label{proof-lem3.8-a}
\mathbf{P}(I_{k_i}^\pm=i\ \mathrm{i.o.})=0.
\end{align}

Suppose that $\frac{b(r)I_r^\pm}{r}\leq A$ for infinitely many $r$. Then there exists a strictly increasing sequence $(r_i)_{i\geq1}$ such that
$\frac{b(r_i)I_{r_i}^\pm}{r_i}\leq A,\ \forall i\geq1.$
By \eqref{eq:3.32}, $I_{r_i}^\pm\to\infty\  \text{a.s.}.$ Moreover, since $I_{r_i}^\pm\leq r_i$ and $b(\cdot)$ is non-decreasing,
$$
r_i
\geq
\frac{I_{r_i}^\pm b(r_i)}{A}
\geq
\frac{I_{r_i}^\pm b(I_{r_i}^\pm)}{A}\geq k_{I_{r_i}^\pm}~~\text{a.s.},
$$
which together with the fact that $b(I_{r_i}^\pm)\to\infty~\text{a.s.}$, implies that for all sufficiently large $i$, we have
$I_{r_i}^\pm\leq k_{I_{r_i}^\pm}\leq r_i \ \text{a.s.}.$ Then by the definitions in \eqref{3.4-1}, we get that
$I_{k_{I_{r_i}^\pm}}^\pm=I_{r_i}^\pm\  \text{a.s.}$
for all sufficiently large $i$. Since $I_{r_i}^\pm\to\infty \ \text{a.s.}$ (cf. (\ref{eq:3.32})),
this equality holds for infinitely many distinct values of $I_{r_i}^\pm$. Hence, up to a null event,
$$
\left\{\frac{b(r)I_r^\pm}{r}\leq A\ \mathrm{i.o.}\right\}
\subseteq
\left\{I_{k_j}^\pm=j\ \mathrm{i.o.}\right\},
$$
 which together with \eqref{proof-lem3.8-a} implies that
$\mathbf{P}\left(\frac{b(r)I_r^\pm}{r}\leq A\ \mathrm{i.o.}\right)=0.$
Taking $A\in\mathbb{N}$ gives\linebreak
$\liminf_{r\to\infty}\frac{b(r)I_r^\pm}{r}=\infty\ \text{a.s.}$.

Suppose next that
$$
\sum_{r=2}^{\infty}\frac{1}{rb(r)}=\infty.
$$
Since $(rb(r))^{-1}$ is non-increasing, the Cauchy condensation theorem \cite[Theorem~3.27]{Rudin1976} gives
$\sum_{q=1}^{\infty}\frac{1}{b(2^q)}=\infty.$
Fix $0<\delta<b(1)/2$, and set
$$
\ell_q:=\left\lfloor\frac{\delta 2^q}{b(2^q)}\right\rfloor,
~~~
\mathcal{Q}:=\{q\geq1:\ell_q\geq1\}.
$$
For every $q\in\mathcal{Q}$, $1\leq\ell_q<2^{q-1}.$ If $q\notin\mathcal{Q}$, then
$\delta 2^q/b(2^q)<1$, and hence
$\sum_{q\notin\mathcal{Q}}\frac{1}{b(2^q)}
\leq\frac{1}{\delta}\sum_{q=1}^{\infty}\frac{1}{2^q}<\infty.$
It follows that
$\sum_{q\in\mathcal{Q}}\frac{1}{b(2^q)}=\infty.$
Moreover, for every $q\in\mathcal{Q}$,
$\ell_q\geq\frac{\delta 2^q}{2b(2^q)},$
and therefore
$$
\sum_{q\in\mathcal{Q}}\frac{\ell_q}{2^q}
\geq
\frac{\delta}{2}\sum_{q\in\mathcal{Q}}\frac{1}{b(2^q)}=\infty.
$$

For $u\geq1$, set
$$
\mathcal{Q}_u:=\mathcal{Q}\cap\{1,\ldots,u\},
~~
Q_u^\pm:=\sum_{q\in\mathcal{Q}_u}\mathbf{1}_{\{I_{2^q}^\pm\leq\ell_q\}}.
$$
Lemma~\ref{lem:3.6} gives
\begin{equation}\label{eq:3.44}
c\sum_{q\in\mathcal{Q}_u}\frac{\ell_q}{2^q}
\leq
\mathbf{E}[Q_u^\pm]=\sum_{q\in\mathcal{Q}_u}\mathbf{P}(I_{2^q}^\pm\leq\ell_q)
\leq
C\sum_{q\in\mathcal{Q}_u}\frac{\ell_q}{2^q},
\end{equation}
and hence
$\mathbf{E}[Q_u^\pm]\to\infty.$ For $q<h$ in $\mathcal{Q}$, $2^h\geq2^{q+1}$, and
Lemma~\ref{lem:3.7} gives
\begin{equation}\label{eq:3.45}
\mathbf{P}
\left(I_{2^q}^\pm\leq\ell_q,\,I_{2^h}^\pm\leq\ell_h\right)
\leq
C\frac{\ell_q}{2^h}+C\frac{\ell_q}{2^q}\frac{\ell_h}{2^h}
=
C\frac{\ell_q}{2^q}2^{q-h}+C\frac{\ell_q}{2^q}\frac{\ell_h}{2^h}.
\end{equation}
By \eqref{eq:3.44}, \eqref{eq:3.45}, and $\sum_{h>q}2^{q-h}\leq1$, we obtain
$$
\begin{aligned}
\mathbf{E}[(Q_u^\pm)^2]
&=
\sum_{q\in\mathcal{Q}_u}
\mathbf{P}(I_{2^q}^\pm\leq\ell_q)+2\sum_{\substack{q,h\in\mathcal{Q}_u\\q<h}}
\mathbf{P}\left(I_{2^q}^\pm\leq\ell_q,\,I_{2^h}^\pm\leq\ell_h\right)\\
&\leq
C\sum_{q\in\mathcal{Q}_u}
\frac{\ell_q}{2^q}+C\sum_{\substack{q,h\in\mathcal{Q}_u\\q<h}}\frac{\ell_q}{2^q}2^{q-h}
+
C\left(\sum_{q\in\mathcal{Q}_u}\frac{\ell_q}{2^q}\right)^2\\
&\leq
C\left(\mathbf{E}[Q_u^\pm]+\left(\mathbf{E} [Q_u^\pm]\right)^2\right)
\leq  C\left(\mathbf{E}[Q_u^\pm]\vee 1\right)^2,
\end{aligned}
$$
which together with  $\mathbf{E}[Q_u^\pm]\to\infty$, implies that
$\mathbf{E}[(Q_u^\pm)^2]\leq C \left(\mathbf{E}[Q_u^\pm]\right)^2$
for all sufficiently large $u$. The Paley-Zygmund lemma \cite[Lemma~5.1]{K2021} yields for all sufficiently large $u$,
$$
\mathbf{P}\left(Q_u^\pm\geq\frac{1}{2}\mathbf{E}[Q_u^\pm]\right)
\geq
\left(1-\frac{1}{2}\right)^2 \frac{\left(\mathbf{E}[Q_u^\pm]\right)^2}{\mathbf{E}[(Q_u^\pm)^2]}
\geq
\frac{1}{4C}=:c>0.
$$
Since $\mathbf{E}[Q_u^\pm]\to\infty$, for any $M>0$, we can choose $u_M$ such that $\frac12\mathbf{E}[Q_u^\pm]\ge M$. Then $\mathbf{P}\left(\sup_{u\geq1}Q_u^\pm\ge M\right)\ge c$,
and continuity from above gives $\mathbf{P}\left(\sup_{u\geq1}Q_u^\pm=\infty\right)\geq c>0.$
Consequently,
$\mathbf{P}(I_{2^q}^\pm\leq\ell_q\ \mathrm{i.o.})
=
\mathbf{P}\left(\sup_{u\geq1}Q_u^\pm=\infty\right)\geq c>0.$
On the event $\{I_{2^q}^\pm\leq\ell_q\}$, the definition of $\ell_q$ gives
$\frac{b(2^q)I_{2^q}^\pm}{2^q}\leq\frac{b(2^q)\ell_q}{2^q}\leq\delta.$
Then
$\left\{I_{2^q}^\pm\leq\ell_q\ \mathrm{i.o.}\right\}
\subseteq
\left\{\liminf_{r\to\infty}\frac{b(r)I_r^\pm}{r}\leq\delta\right\}.$
Therefore,
$$
\mathbf{P}\left(\liminf_{r\to\infty}\frac{b(r)I_r^\pm}{r}\leq\delta\right)
\geq
\mathbf{P}\left(I_{2^q}^\pm\leq\ell_q\ \mathrm{i.o.}\right)
\geq c>0.
$$

For $i\geq1$ and $r>i$, define
$$
\bar{\mathcal{J}}_r^i:=\left\{i<k\leq r:\zeta_k^*=\max_{i<j\leq r}\zeta_j^*\right\},
~~
\bar{I}_r^{-,i}:=\min\bar{\mathcal{J}}_r^i,
~~
\bar{I}_r^{+,i}:=\max\bar{\mathcal{J}}_r^i.
$$
By \eqref{eq:3.32}, for every fixed $i$, $I_r^\pm=\bar{I}_r^{\pm,i}$ a.s. for all sufficiently large $r$.
 Hence
\begin{equation}\label{eq:3.46}
\mathbf{P}\left(\left\{\liminf_{r\to\infty}\frac{b(r)I_r^\pm}{r}\leq\delta\right\}
\mathbin{\triangle}
\left\{\liminf_{r\to\infty}\frac{b(r)\bar{I}_r^{\pm,i}}{r}\leq\delta\right\}\right)
=0.
\end{equation}
The second event in \eqref{eq:3.46} belongs to $\sigma(\zeta_k^*:k>i).$ Since $i$ is arbitrary, the first event belongs to the completed tail $\sigma$-field of $(\zeta_k^*)_{k\geq1}$. By Lemma~\ref{lem:3.4}, Kolmogorov's 0-1 law, and the preceding lower bound, we obtain
$$
\mathbf{P}\left(\liminf_{r\to\infty}\frac{b(r)I_r^\pm}{r}\leq\delta\right)=1.
$$
Taking $\delta=b(1)/(4j)$, $j\geq1$, yields
$\liminf_{r\to\infty}\frac{b(r)I_r^\pm}{r}=0\ \text{a.s.}.$
This proves \eqref{eq:3.33}.  The proof is complete.
\end{proof}

\bigskip

 \noindent {\bf\large Acknowledgments}\  This work was supported by  National Key R\&D Program of China (Grant No. 2023YFA1009600),  National Natural Science Foundation of China (Grant Nos. 12171335, 12471139, 12301603) and  Simons Foundation (\#429343).


\begin{thebibliography}{1234}

\bibitem[Bass (2023)]{B2023}
Bass, R. F., The rate of escape of the most visited site of Brownian motion.
\textit{Electron. J. Probab.} \textbf{28}(2023), 1-12.

\bibitem[Bass--Eisenbaum--Shi (2000)]{BES2000}
Bass, R. F., Eisenbaum, N., Shi, Z., The most visited sites of symmetric stable processes.
\textit{Probab. Theory Relat. Fields} \textbf{116}(2000), 391-404.

\bibitem[Bass--Griffin (1985)]{BG1985}
Bass, R. F., Griffin, P. S., The most visited site of Brownian motion and simple
random walk.
\textit{Z. Wahrsch. Verw. Gebiete} \textbf{70}(1985), 417-436.

\bibitem[Bertoin--Marsalle (1998)]{BM1998}
Bertoin, J., Marsalle, L., Point le plus visit\'{e} par un mouvement brownien avec
d\'{e}rive.
In \textit{S\'eminaire de Probabilit\'es XXXII}, \textit{Lecture Notes in Math.}
\textbf{1686}, Springer, Berlin, 1998, 397-411.

\bibitem[Bini--Latouche--Meini (2005)]{BM2005}
Bini, D.~A., Latouche, G., Meini, B.,
\textit{Numerical Methods for Structured Markov Chains}.
Oxford University Press, Oxford, 2005.


\bibitem[Cs\'aki et al. (2005)]{CFRRS2005}
Cs\'aki, E., F\"oldes, A., R\'ev\'esz, P., Rosen, J., Shi, Z.,
Frequently visited sets for random walks.
\textit{Stochastic Process. Appl.} \textbf{115} (2005), 1503-1517.


\bibitem[Ding--Shen (2018)]{DS2018}
Ding, J., Shen, J.,
Three favorite sites occurs infinitely often for one-dimensional simple random walk.
\textit{Ann. Probab.} \textbf{46}(2018), 2545-2561.



\bibitem[Durrett (2019)]{D2019}
Durrett, R., \textit{Probability: Theory and Examples (5th ed.)}, 
Cambridge University Press, Cambridge, 2019.


\bibitem[Erd\H{o}s--R\'ev\'esz (1984)]{ER1984}
Erd\H{o}s, P., R\'ev\'esz, P.,
On the favorite points of a random walk.
In \textit{Mathematical Structures Computational Mathematics Mathematical Modelling}, vol.~2, Bulgarian Academy of Sciences, Sofia, 1984, 152-157.

\bibitem[Erd\H{o}s--R\'ev\'esz (1987)]{ER1987}
Erd\H{o}s, P., R\'ev\'esz, P.,
Problems and results on random walks.
In \textit{Mathematical Statistics and Probability Theory}, vol.~B, Reidel, Dordrecht, 1987, 59-65.

\bibitem[Erd\H{o}s--R\'ev\'esz (1991)]{ER1991}
Erd\H{o}s, P., R\'ev\'esz, P.,
Three problems on the random walk in $\mathbb{Z}^d$.
\textit{Studia Sci. Math. Hungar.} \textbf{26}(1991), 309-320.


\bibitem{Gut2009}
Gut, A.,
\textit{Stopped Random Walks: Limit Theorems and Applications},
Applied Probability, Vol.~8, Springer, New York, 2009.


\bibitem[Hao-Hu-Ma-Song (2026)]{HHMS2026}
Hao, C. X., Hu, Z. C., Ma, T., Song, R.,
Three favorite edges occurs infinitely often for one-dimensional simple random walk.
\textit{Commun. Math. Stat.} \textbf{14}(2026), 391-414.

\bibitem[Hao-Li-Okada-Zheng (2025)]{HLOZ2025}
Hao, C. X., Li, X. Y., Okada, I., Zheng, Y. S.,
Favorite sites for simple random walk in two and more dimensions.
\textit{Probab. Theory Relat. Fields}  \textbf{195}(2026), 1765-1822.

\bibitem[Hu--Shi (2000)]{HS2000}
Hu, Y., Shi, Z.,
The problem of the most visited site in random environment.
\textit{Probab. Theory Relat. Fields} \textbf{116}(2000), 273-302.

\bibitem[Kallenberg (2021)]{K2021}
Kallenberg, O.,
\textit{Foundations of Modern Probability (3rd ed.)}, Probability Theory and Stochastic Modelling, Vol.~99, Springer,  2021.



\bibitem[Lifshits--Shi (2004)]{LS2004}
Lifshits, M. A., Shi, Z.,
The escape rate of favorite sites of simple random walk and Brownian motion.
\textit{Ann. Probab.} \textbf{32}(2004), 129-152.


\bibitem[Marcus-Rosen (2006)]{MR2006}
Marcus, M. B., Rosen, J.,
\textit{Markov Processes, Gaussian Processes, and Local Times}.
Cambridge Univ. Press, Cambridge, 2006.

\bibitem{Oka16}
Okada, I.,
Topics and problems on favorite sites of random walks.
\textit{Stochastic analysis on large scale interacting systems}, 129-139, RIMS K\^{o}ky\^{u}roku Bessatsu, B 59, Res. Inst. Math. Sci. (RIMS), Kyoto, 2016.

\bibitem[Rudin (1976)]{Rudin1976}
Rudin, W.,
\textit{Principles of Mathematical Analysis (3rd ed.)}, McGraw--Hill, New York, 1976.

\bibitem[Shi--T\'oth (2000)]{ST2000}
Shi, Z., T\'oth, B.,
Favourite sites of simple random walk.
\textit{Period. Math. Hungar.} \textbf{41}(2000), 237-249.

\bibitem[Spitzer (2001)]{S2001}
Spitzer, F., \textit{Principles of Random Walk (2nd ed.)},   Springer, 2001.

\bibitem[Sznitman--Zerner (1999)]{SZ1999}
Sznitman, A. S., Zerner, M. P. W.,
A law of large numbers for random walks in random environment.
\textit{Ann. Probab.} \textbf{27}(1999), 1851-1869.

\bibitem[Vilkas (2025)]{Vilkas2025}
Vilkas, T.,
Left-continuous random walk on $\mathbb{Z}$ and the parity of its hitting times.
\textit{J. Theoret. Probab.} \textbf{38}(2025), Paper No.~68.

\bibitem[T\'oth (2001)]{T2001}
T\'oth, B.,
No more than three favorite sites for simple random walk.
\textit{Ann. Probab.} \textbf{29}(2001), 484-503.

\bibitem[T\'oth--Werner (1997)]{TW1997}
T\'oth, B., Werner, W.,
Tied favourite edges for simple random walk.
\textit{Combin. Probab. Comput.} \textbf{6}(1997), 359-369.

\bibitem[Zhou--Hu--Song (2025)]{ZHS2025}
Zhou, G. S., Hu, Z. C., Song, R.,
Favorite sites of one-dimensional asymmetric simple random walk.
arXiv: 2511.02316, 2025.


\end{thebibliography}
\end{document}